\documentclass[12pt]{amsart}
\usepackage{amssymb}
\usepackage{amsmath}
\usepackage{mathrsfs}
\usepackage{enumerate}
\usepackage{amsthm}
\usepackage{cancel}
\usepackage{graphicx}
\newtheorem{theorem}{Theorem}[section]
\newtheorem{lemma}[theorem]{Lemma}

\theoremstyle{definition}
\newtheorem{problem}{Problem}

\newtheorem{remark}[theorem]{Remark}

\numberwithin{equation}{section}
\usepackage{color}
\usepackage{hyperref}
\hypersetup{
    colorlinks=true, 
    linkcolor=blue,  
}
\mathchardef\hyphen="2D
\begin{document}
\allowdisplaybreaks
\title{Mixture and separation of log-concave measures}
\author{March~T.~Boedihardjo and Yao Xie}
\address{Department of Mathematics, Michigan State University, East Lansing, MI 48824}
\address{H. Milton Stewart School of Industrial and Systems Engineering, Georgia Institute of Technology, Atlanta, GA, 30332}
\email{boedihar@msu.edu, yao.xie@isye.gatech.edu}
\begin{abstract}
For a two-component log-concave mixture model, we investigate the extent to which the weight, mean, and covariance of each component distribution can be accurately recovered when given sufficient samples of the mixture distributions. One fundamental obstruction is that the mixture distribution itself could sometimes be log-concave, and in this case, accurate recovery is impossible. In this paper, we identify two regimes, one where the mixture distribution itself could be log-concave and another one where the mixture distribution is never log-concave, and one can always separate the two component distributions using a quadratic classifier.
\end{abstract}
\keywords{}
\subjclass[2020]{}
\maketitle

\section{Introduction}
We study the problem of recovering the components of a given mixture model. For a two-component Gaussian mixture model, accurate recovery is known to be possible as long as the two components are non-identical and one is given sufficient samples of the mixture \cite{kmv}. For a general log-concave mixture model, although some initial work was done two decades ago in \cite{ksv}, little work has been done since then. This is due to the lack of understanding of the behavior of general log-concave measures. However, recent breakthroughs on the KLS conjecture \cite{kls} and the thin shell conjecture \cite{thinshell} provide powerful tools for studying log-concave measures. These tools enable us to revisit the recovery problem for a two-component log-concave mixture model and identify the regimes where accurate recovery might be possible.

Before going into the details of the problems, we first present some basic setups. Throughout this paper, $n\in\mathbb{N}$ with $n\geq 2$, $\{e_{1},\ldots,e_{n}\}$ is the canonical basis for $\mathbb{R}^{n}$ and $\langle\,,\,\rangle$ denotes the canonical inner product on $\mathbb{R}^{n}$. For $w\in\mathbb{R}^{n}$, we denote by $\|w\|$ the Euclidean norm of $w$. We denote by $\mathbb{R}^{n\times n}$ the set of all $n\times n$ real matrices. For $A\in\mathbb{R}^{n\times n}$, the Frobenius norm of $A$ is denoted by $\|A\|_{\mathrm{F}}$, the spectral norm of $A$ is denoted by $\|A\|_{\mathrm{op}}$, the range of $A$ is denoted by $\mathrm{ran}(A)$, the kernel of $A$ is denoted by $\mathrm{ker}(A)$, and the $(i,j)$-entry of $A$ is denoted by $A_{i,j}$ where $i,j\in\{1,\ldots,n\}$. For $a_{1},\ldots,a_{n}\in\mathbb{R}$, we denote by $(a_{i})_{1\leq i\leq n}\in\mathbb{R}^{n}$ the vector in $\mathbb{R}^{n}$ with entries $a_{1},\ldots,a_{n}$, and we denote by $\mathrm{diag}((a_{i})_{1\leq i\leq n})$ the $n\times n$ diagonal matrix with diagonal entries $a_{1},\ldots,a_{n}$. The identity matrix is denoted by $I$.

If $W$ is a random variable in $\mathbb{R}$, we denote by $\mathrm{Var}(W)$ the variance of $W$.

If $\mu$ is a probability measure on $\mathbb{R}^{n}$ and $F:\mathbb{R}^{n}\to\mathbb{R}^{n}$ is a measurable function, then $F_{\#}\mu$ denotes the pushforward measure of $\mu$ by $F$, i.e., if $X$ is a random vector in $\mathbb{R}^{n}$ distributed according to $\mu$, then $F_{\#}\mu$ is the distribution of the random vector $F(X)$.

Let $\mathcal{P}_{2}(\mathbb{R}^{n})$ be the set of all probability measures $\mu$ on $\mathbb{R}^{n}$ with $\int_{\mathbb{R}^{n}}\|x\|^{2}\,d\mu(x)<\infty$. For $\mu\in\mathcal{P}_{2}(\mathbb{R}^{n})$, the {\it mean} of $\mu$ is the vector
\[z(\mu):=\int_{\mathbb{R}^{n}}x\,d\mu(x),\]
the {\it second moment} of $\mu$ is the $n\times n$ positive semidefinite matrix
\[B_{\mu}:=\int_{\mathbb{R}^{n}}xx^{T}\,d\mu(x),\]
where $x^{T}$ is the transpose of $x$, and the {\it covariance} of $\mu$ is the positive semidefinite matrix 
\[\mathrm{Cov}(\mu):=B_{\mu}-z(\mu)z(\mu)^{T}.\]

For $z\in\mathbb{R}^{n}$ and a positive semidefinite matrix $\Sigma\in\mathbb{R}^{n\times n}$, the normal distribution with mean $z$ and covariance $\Sigma$ is denoted by $\mathcal{N}(z,\Sigma)$.

Suppose that $\mu_{1}$ and $\mu_{2}$ are normal distributions on $\mathbb{R}^{n}$ with unknown means and covariances, $\mu_{1}\neq\mu_{2}$, and $0<\alpha<1$ is unknown. Given sufficient samples of the Gaussian mixture $\alpha\mu_{1}+(1-\alpha)\mu_{2}$, there is an algorithm \cite[Theorem 1]{kmv} that accurately recovers, with high probability, the weight $\alpha$ and the means and covariances of the component distributions $\mu_{1}$ and $\mu_{2}$ (up to permutation of the two components). Moreover, there is a classifier \cite[Corollary 3]{kmv} that separates the two component distributions $\mu_{1}$ and $\mu_{2}$.

In this paper, we investigate the extent to which these results hold for a general two-component log-concave mixture model.
\begin{problem}\label{introproblem}
Suppose that $\mu_{1}$ and $\mu_{2}$ are unknown log-concave probability measures on $\mathbb{R}^{n}$ and $0<\alpha<1$ is a unknown. Given sufficient samples of the mixture distribution $\alpha\mu_{1}+(1-\alpha)\mu_{2}$, is it possible to accurately recover the weight $\alpha$ and the means and covariances of $\mu_{1}$ and $\mu_{2}$ and to generate a classifier that separates $\mu_{1}$ and $\mu_{2}$?
\end{problem}
Assume that the weight $\alpha$ is bounded away from $0$ or $1$. When the means $z(\mu_{1}),z(\mu_{2})$ of the two component distributions are relatively far apart so that
\[\|z(\mu_{1})-z(\mu_{2})\|\gg(\ln n)\cdot(\|\mathrm{Cov}(\mu_{1})\|_{\mathrm{op}}+\|\mathrm{Cov}(\mu_{2})\|_{\mathrm{op}}),\]
there is an algorithm \cite[Theorem 2]{ksv} that accurately recovers, with high probability, the weight $\alpha$ and the means and covariances of $\mu_{1}$ and $\mu_{2}$ (up to permutation of the two components). Moreover, one can correctly classify the sample points into those that come from $\mu_{1}$ and those that come from $\mu_{2}$ \cite[Theorem 3]{ksv}. The idea is that after applying PCA on the sample points, there will be two clusters that are far apart, one consisting of points that come from $\mu_{1}$ and the other one consisting of points that come from $\mu_{2}$. After identifying these two clusters, one can then compute the mean and covariance for the sample points associated with each cluster. 

However, in the absence of the assumption that $z(\mu_{1})$ and $z(\mu_{2})$ are relatively far apart (e.g., when $z(\mu_{1})=z(\mu_{2})$), the strategy outlined in the previous paragraph does not work in general, since the two clusters obtained after applying PCA are no longer necessarily far apart.

In fact, there is a fundamental obstruction, which, if it happens, makes accurate recovery impossible. When the two-component mixture $\alpha\mu_{1}+(1-\alpha)\mu_{2}$ in Problem \ref{introproblem} is itself log-concave, it becomes a one-component distribution by itself, and we can write $\alpha\mu_{1}+(1-\alpha)\mu_{2}$ as another mixture decomposition:
\[\alpha\mu_{1}+(1-\alpha)\mu_{2}=\beta\cdot(\alpha\mu_{1}+(1-\alpha)\mu_{2})+(1-\beta)\cdot(\alpha\mu_{1}+(1-\alpha)\mu_{2}),\]
for every $0\leq\beta\leq 1$, and each component $\alpha\mu_{1}+(1-\alpha)\mu_{2}$ in this new decomposition is log-concave. In this case, it is impossible to accurately recover the weight $\alpha$ from samples of the mixture distribution $\alpha\mu_{1}+(1-\alpha)\mu_{2}$, unless we have additional knowledge about $\mu_{1},\mu_{2}$ (e.g., $\mu_{1},\mu_{2}$ being normal distributions).

Note that the obstruction explained in the previous paragraph does not occur in the context of Gaussian mixture models, since a mixture of non-identical normal distributions is never a normal distribution.

In this paper, we identify two regimes, one where the above obstruction could occur and another one where the above obstruction never occurs. Let $z_{1}=z(\mu_{1})$, $z_{2}=z(\mu_{2})$, $\Sigma_{1}=\mathrm{Cov}(\mu_{1})$, $\Sigma_{2}=\mathrm{Cov}(\mu_{2})$.

{\bf Regime 1:} This regime is the case where $z_{1}-z_{2}\in\mathrm{ran}(\Sigma_{1}+\Sigma_{2})$ and
\[\|(\Sigma_{1}+\Sigma_{2})^{-1/2}(\Sigma_{1}-\Sigma_{2})(\Sigma_{1}+\Sigma_{2})^{-1/2}\|_{\mathrm{F}}+\|(\Sigma_{1}+\Sigma_{2})^{-1/2}(z_{1}-z_{2})\|^{2}\leq c,\]
where $c>0$ is a small absolute constant.

In this regime, we show that the mixture distribution $\alpha\mu_{1}+(1-\alpha)\mu_{2}$ could sometimes be log-concave, and as explained above, in this case, accurate recovery of $\alpha$ is impossible unless we have additional knowledge about $\mu_{1},\mu_{2}$.

{\bf Regime 2:} This regime consists of two cases:\\
(i) $z_{1}-z_{2}\notin\mathrm{ran}(\Sigma_{1}+\Sigma_{2})$, or\\
(ii) $z_{1}-z_{2}\in\mathrm{ran}(\Sigma_{1}+\Sigma_{2})$ and
\[\|(\Sigma_{1}+\Sigma_{2})^{-1/2}(\Sigma_{1}-\Sigma_{2})(\Sigma_{1}+\Sigma_{2})^{-1/2}\|_{\mathrm{F}}+\|(\Sigma_{1}+\Sigma_{2})^{-1/2}(z_{1}-z_{2})\|^{2}\gg\sqrt{\ln n}.\]

In this regime, we show that the mixture distribution $\alpha\mu_{1}+(1-\alpha)\mu_{2}$ is never log-concave for $\alpha$ bounded away from $0$ or $1$. Moreover, there is a quadratic set $L\subset\mathbb{R}^{n}$ that depends only on $z_{1},z_{2},\Sigma_{1},\Sigma_{2}$ such that
\[\mu_{1}(L)\geq 1-\delta\quad\text{and}\quad\mu_{2}(\mathbb{R}^{n}\backslash L)\geq 1-\delta,\]
where $\delta>0$ is some desired threshold, i.e., $\mu_{1}$ and $\mu_{2}$ are ``almost disjointly supported." The set $L$ can be thought of as a classifier separating the two probability measures $\mu_{1}$ and $\mu_{2}$. These two results suggest that in this regime, there is some hope to accurately recover the unknown weight $\alpha$ and the unknown means and covariances $z_{1},z_{2},\Sigma_{1},\Sigma_{2}$ when given sufficient samples of the mixture distribution $\alpha\mu_{1}+(1-\alpha)\mu_{2}$.

The conditions in Regime 1 and Regime 2 may, at first, look a bit technical. So let's focus on the most interesting case where $z_{1}=z_{2}$, since, as mentioned above, the case where $z_{1}$ and $z_{2}$ are far apart has already been done in \cite{ksv}. When $z_{1}=z_{2}$, Regime 1 is the case where
\[\|(\Sigma_{1}+\Sigma_{2})^{-1/2}(\Sigma_{1}-\Sigma_{2})(\Sigma_{1}+\Sigma_{2})^{-1/2}\|_{\mathrm{F}}\leq c,\]
whereas Regime 2 is the case where
\[\|(\Sigma_{1}+\Sigma_{2})^{-1/2}(\Sigma_{1}-\Sigma_{2})(\Sigma_{1}+\Sigma_{2})^{-1/2}\|_{\mathrm{F}}\gg\sqrt{\ln n}.\]

For example, if $\Sigma_{1}=bI$ and $\Sigma_{2}=I$ where $b>0$, then Regime 1 is the case where $|\frac{b-1}{b+1}|\leq\frac{c}{\sqrt{n}}$, whereas Regime 2 is the case where $|\frac{b-1}{b+1}|\gg\sqrt{\frac{\ln n}{n}}$. When $n$ is large, Regime 1 is equivalent to $|b-1|\leq\frac{\widetilde{c}}{\sqrt{n}}$, where $\widetilde{c}>0$ is a small absolute constant, whereas Regime 2 is equivalent to $|b-1|\gg\sqrt{\frac{\ln n}{n}}$. This is because if $b\leq 2$, then $\frac{1}{3}|b-1|\leq|\frac{b-1}{b+1}|\leq|b-1|$; and if $b\geq 2$, then $|\frac{b-1}{b+1}|\geq\frac{1}{3}$ and $b\geq 2$ always belongs to Regime 2. As we can see, Regime 1 requires $b$ to be really close to $1$, whereas Regime 2 only requires $b$ to be slightly different from 1.

More generally, when the matrices $\Sigma_{1}$ and $\Sigma_{2}$ have high stable ranks, Regime 1 requires $\Sigma_{1}$ and $\Sigma_{2}$ to be really close to each other, whereas Regime 2 only requires $\Sigma_{1}$ and $\Sigma_{2}$ to be slightly different from each other. Indeed, we always have
\[\|(\Sigma_{1}+\Sigma_{2})^{-1/2}(\Sigma_{1}-\Sigma_{2})(\Sigma_{1}+\Sigma_{2})^{-1/2}\|_{\mathrm{F}}\geq\frac{\|\Sigma_{1}-\Sigma_{2}\|_{\mathrm{F}}}{\|\Sigma_{1}+\Sigma_{2}\|_{\mathrm{op}}}\geq\frac{\|\Sigma_{1}-\Sigma_{2}\|_{\mathrm{F}}}{\|\Sigma_{1}\|_{\mathrm{op}}+\|\Sigma_{2}\|_{\mathrm{op}}},\]
and so Regime 2 includes the case where
\[\|\Sigma_{1}-\Sigma_{2}\|_{\mathrm{F}}\gg\sqrt{\ln n}\cdot(\|\Sigma_{1}\|_{\mathrm{op}}+\|\Sigma_{2}\|_{\mathrm{op}}).\]
When the stable ranks $(\frac{\|\Sigma_{1}\|_{\mathrm{F}}}{\|\Sigma_{1}\|_{\mathrm{op}}})^{2}$ and $(\frac{\|\Sigma_{2}\|_{\mathrm{F}}}{\|\Sigma_{2}\|_{\mathrm{op}}})^{2}$ are of order $n$, this condition is equivalent to
\[\|\Sigma_{1}-\Sigma_{2}\|_{\mathrm{F}}\gg\sqrt{\frac{\ln n}{n}}\cdot(\|\Sigma_{1}\|_{\mathrm{F}}+\|\Sigma_{2}\|_{\mathrm{F}}).\]
\subsection{Can two be one?}
In this subsection, we address the issue of the mixture distribution itself being log-concave by trying to identify the regimes where this issue could occur and regimes where this issue never occurs.
\begin{problem}\label{probidentifyset}
Fix $n\in\mathbb{N}$ and $0<\alpha<1$. Identify the set $\Lambda_{n,\alpha}$ of all tuples $(z_{1},z_{2},\Sigma_{1},\Sigma_{2})\in\mathbb{R}^{n}\times\mathbb{R}^{n}\times\mathbb{R}^{n\times n}\times\mathbb{R}^{n\times n}$ for which there exist log-concave probability measures $\mu_{1}$ and $\mu_{2}$ on $\mathbb{R}^{n}$ such that $\mu_{j}$ has mean $z_{j}$ and covariance $\Sigma_{j}$, for $j=1,2$, and $\alpha\mu_{1}+(1-\alpha)\mu_{2}$ is log-concave.
\end{problem}
The first main result of this paper is a partial answer to Problem \ref{probidentifyset} where we provide a sufficient condition and a necessary condition for a given tuple $(z_{1},z_{2},\Sigma_{1},\Sigma_{2})$ to be in $\Lambda_{n,\alpha}$. While $\Sigma_{1}+\Sigma_{2}$ is not always invertible, we can always define the operator $(\Sigma_{1}+\Sigma_{2})^{-1/2}$. See Appendix \ref{inversesection}.
\begin{theorem}\label{firstmain}
There are absolute constants $C,c>0$ such that the following hold. Suppose that $z_{1},z_{2}\in\mathbb{R}^{n}$ and $\Sigma_{1},\Sigma_{2}\in\mathbb{R}^{n\times n}$ are positive semidefinite matrices.
\begin{enumerate}[(1)]
\item If $z_{1}-z_{2}\in\mathrm{ran}(\Sigma_{1}+\Sigma_{2})$ and
\[\|(\Sigma_{1}+\Sigma_{2})^{-1/2}(\Sigma_{1}-\Sigma_{2})(\Sigma_{1}+\Sigma_{2})^{-1/2}\|_{\mathrm{F}}+\|(\Sigma_{1}+\Sigma_{2})^{-1/2}(z_{1}-z_{2})\|^{2}\leq c,\]
then there exist probability measures $\mu_{1}$ and $\mu_{2}$ on $\mathbb{R}^{n}$ such that $\mu_{j}$ has mean $z_{j}$ and covariance $\Sigma_{j}$, for $j=1,2$, and $\alpha\mu_{1}+(1-\alpha)\mu_{2}$ is log-concave for all $\alpha\in[0,1]$.
\item Conversely, for any fixed $\alpha\in(0,1)$, if there exist probability measures $\mu_{1}$ and $\mu_{2}$ on $\mathbb{R}^{n}$ such that $\mu_{j}$ has mean $z_{j}$ and covariance $\Sigma_{j}$, for $j=1,2$, and $\alpha\mu_{1}+(1-\alpha)\mu_{2}$ is log-concave, then $z_{1}-z_{2}\in\mathrm{ran}(\Sigma_{1}+\Sigma_{2})$ and
\[\|(\Sigma_{1}+\Sigma_{2})^{-1/2}(\Sigma_{1}-\Sigma_{2})(\Sigma_{1}+\Sigma_{2})^{-1/2}\|_{\mathrm{F}}+\|(\Sigma_{1}+\Sigma_{2})^{-1/2}(z_{1}-z_{2})\|^{2}\leq\frac{C\sqrt{\ln n}}{\alpha(1-\alpha)}.\]
\end{enumerate}
\end{theorem}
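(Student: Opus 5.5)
Both parts are affine invariant, so I would begin by reducing to a normalized situation. If $T(x)=Ax+b$ is an affine map, then pushforwards preserve log-concavity, commute with mixtures, and transform means and covariances by $z\mapsto Az+b$ and $\Sigma\mapsto A\Sigma A^{T}$. Under such maps the quantity
\[
Q:=\|(\Sigma_{1}+\Sigma_{2})^{-1/2}(\Sigma_{1}-\Sigma_{2})(\Sigma_{1}+\Sigma_{2})^{-1/2}\|_{\mathrm{F}}+\|(\Sigma_{1}+\Sigma_{2})^{-1/2}(z_{1}-z_{2})\|^{2}
\]
is invariant, using the conventions of Appendix \ref{inversesection}. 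Restricting to $\mathrm{ran}(\Sigma_{1}+\Sigma_{2})$, we may therefore assume $\Sigma_{1}+\Sigma_{2}=2I$ on a $k$-dimensional subspace.

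\textbf{Part (2).} The range condition comes first. Let $\nu=\alpha\mu_{1}+(1-\alpha)\mu_{2}$. If $v\perp\mathrm{ran}(\Sigma_{1}+\Sigma_{2})$, then $\langle X,v\rangle$ is a.s. constant under each $\mu_{j}$, equal to $\langle z_{j},v\rangle$. If these two constants differ, $\nu$ puts mass on two parallel hyperplanes and nothing in between, which contradicts log-concavity (log-concave measures have convex support, restricted to affine hulls). Hence $z_{1}-z_{2}\in\mathrm{ran}(\Sigma_{1}+\Sigma_{2})$.

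After normalizing, $\nu$ is log-concave with mean $z=\alpha z_{1}+(1-\alpha)z_{2}$ and covariance
\[
\Sigma=\alpha\Sigma_{1}+(1-\alpha)\Sigma_{2}+\alpha(1-\alpha)dd^{T},\qquad d=z_{1}-z_{2}.
\]
The key quadratic test function is $f(x)=\langle M(x-z),x-z\rangle$ with $M=\Sigma_{1}-\Sigma_{2}$ (suitably normalized), possibly augmented by a linear term in $d$. The plan is to compare its variance under $\nu$ with its spread across components.
\begin{itemize}
\item \emph{Lower bound.} By the law of total variance, $\mathrm{Var}_{\nu}(f)\ge\alpha(1-\alpha)\bigl(E_{\mu_{1}}f-E_{\mu_{2}}f\bigr)^{2}$. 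Here $E_{\mu_{1}}f-E_{\mu_{2}}f=\mathrm{tr}(M(\Sigma_{1}-\Sigma_{2}))+(\text{terms in }d)=\|M\|_{\mathrm{F}}^{2}+\dots$
\item \emph{Upper bound.} Apply the thin-shell / KLS-type concentration for quadratic forms of log-concave vectors: $\mathrm{Var}(\langle MY,Y\rangle)\lesssim\ln n\cdot\|M\|_{\mathrm{F}}^{2}\|\mathrm{Cov}\|_{\mathrm{op}}^{2}$, which follows from the current KLS bound $\psi_{n}\lesssim\sqrt{\ln n}$ via Poincaré applied to $f$ (gradient $2M(x-z)$).
\end{itemize}
Comparing the two bounds gives $\alpha(1-\alpha)\|M\|_{\mathrm{F}}^{4}\lesssim\ln n\,\|M\|_{\mathrm{F}}^{2}$, which is the stated bound up to the form of the $\alpha$-dependence.

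The mean term is handled the same way using the linear function $\langle d,x\rangle$ and $\langle d,x\rangle^{2}$, or by bounding $\|d\|^{2}$ through $\mathrm{Cov}(\nu)\preceq 2I$-type control combined with Poincaré. The main obstacle is controlling $\|\Sigma\|_{\mathrm{op}}$, the covariance of $\nu$ that enters the Poincaré constant, in terms of the normalization. This needs $\|d\|$ bounded, so I would first bound $\|d\|$ and then the Frobenius term, tracking the $\alpha$ factors.

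\textbf{Part (1).} This is a construction. Take $\mu_{j}$ to be a tilt of a common log-concave base measure: $d\mu_{j}=(1+h_{j})\,d\gamma$ with $\gamma$ Gaussian (or uniform on a ball), where
\[
h_{j}(x)=\langle a_{j},x\rangle+\langle A_{j}x,x\rangle-\mathrm{tr}(A_{j}),
\]
chosen so that the moment equations match. For smallness, and to have $\log(1+h)$ with bounded Hessian, use a truncated base, e.g. $\gamma$ restricted to a ball of radius about $\sqrt{n}$, where $|h_{j}|\le 1/2$ thanks to the condition $\le c$.

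Then every mixture has density $(1+\alpha h_{1}+(1-\alpha)h_{2})\gamma$, and its log-Hessian is $-I$ plus $\nabla^{2}\log(1+h)$. The latter is $O(\|A\|_{\mathrm{op}}+\|a\|^{2}+\dots)$, small, so log-concavity holds for all $\alpha$.

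The delicate point is that $\|A\|_{\mathrm{F}}\le c$ gives $|\langle Ax,x\rangle-\mathrm{tr}A|$ of size about $c$ only typically, not uniformly. So I would use a base measure concentrated on a thin shell, or check that $\nabla^{2}\log(1+h)=\nabla^{2}h/(1+h)-\nabla h\nabla h^{T}/(1+h)^{2}$ stays small on the support. Finally, correct the covariance exactly by a small linear change of variables, which preserves log-concavity of all mixtures simultaneously.
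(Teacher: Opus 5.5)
\textbf{Part (1).} Your construction does not work as proposed, and this is the main gap. With linear tilts $d\mu_j=(1+h_j)\,d\gamma$ you need $1+h_j\ge 0$ on the whole convex support of $\gamma$, and the two quadratic parts must be essentially opposite. But $\|A\|_{\mathrm{F}}\le c$ controls $\langle Ax,x\rangle-\mathrm{tr}A$ only in $L^2$, not uniformly.

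Take $z_1=z_2=0$ and $\Sigma_{1,2}=I\pm 2A$ with $A=\frac{c}{\sqrt n}\mathrm{diag}(1,\dots,1,-1,\dots,-1)$. The hypothesis of (1) holds, since the Frobenius quantity equals $2c$. Your tilts are (up to truncation corrections) $h_{1,2}=\pm\langle Ax,x\rangle$. Yet $\langle Ax,x\rangle$ takes the values $\pm c\sqrt n$ on the sphere of radius $\sqrt n$ and $\pm c\sqrt n/2$ on $[-1,1]^n$. So for $n\gg c^{-2}$ one of $1+h_1,1+h_2$ is negative on part of the ball or cube, and $\mu_j$ is not even a positive measure.

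A thin shell is not convex, so it cannot support a log-concave base. The alternative ``check that $\nabla^2\log(1+h)$ stays small'' is just what has to be proved.

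The missing idea is to tilt exponentially rather than linearly. The paper uses the densities $e^{-\|x-r\|^2}$ and $e^{-\langle Ex,x\rangle}$ restricted to $[-1,1]^n$, in a basis where $\Sigma_2$ is diagonal relative to $\Sigma_1$ (Lemma \ref{changeofbasislemma}). Positivity is then automatic, and the log-ratio $g$ may be of size $\sqrt n$ without harm. Log-concavity of every mixture needs only $|g'|\le 1$ and $g''\le 1$ along lines (Lemma \ref{1dddlemma}). On the cube, $|g'|\le 2\|(I-E)x\|+2\|r\|\le 2\|I-E\|_{\mathrm{F}}+2\|r\|$ because $|x_i|\le 1$; this is exactly how Frobenius smallness becomes a uniform bound (Lemma \ref{Fconcave}).

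Also, a single ``small linear change of variables'' cannot fix both covariances and the mean difference at once unless the relative eigenvalues and relative mean are already right. The paper arranges this coordinatewise by inverting $h_3$ and $h_4$ (Lemma \ref{functionhlemma}) before the diagonal rescaling $G$.

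\textbf{Part (2).} The range argument is fine. Your covariance argument is a legitimate alternative to the paper's:
the law of total variance, $|\mathbb{E}_{\mu_1}f-\mathbb{E}_{\mu_2}f|\le(\mathrm{Var}_\nu f/(\alpha(1-\alpha)))^{1/2}$, replaces Lemma \ref{bktprop32};
centering $f$ at the mean of $\nu$ replaces the projection off $\mathrm{span}(z_1,z_2)$ and Lemma \ref{gip};
and it even gives $\|M\|_{\mathrm{F}}\lesssim\sqrt{\ln n/(\alpha(1-\alpha))}$.

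It pays, however, with the cross term $(1-2\alpha)\langle Md,d\rangle$ and the factor $\|\mathrm{Cov}(\nu)\|_{\mathrm{op}}^2$. Both require $\|d\|^2\lesssim 1/(\alpha(1-\alpha))$, and you give no working argument for that. ``$\mathrm{Cov}(\nu)\preceq 2I$'' is circular, since $\mathrm{Cov}(\nu)$ contains $\alpha(1-\alpha)dd^T$.

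Polynomials of degree at most two in $Y=\langle x,d\rangle/\|d\|$ cannot do it either. When $\alpha=\frac12$, for every such $p$ the Poincar\'e upper bound $C_P(\nu)\,\mathbb{E}_\nu|p'(Y)|^2$ already dominates the total-variance lower bound $\frac14(\mathbb{E}_{\mu_1}p-\mathbb{E}_{\mu_2}p)^2$, using only the trivial fact $C_P(\nu)\ge\mathrm{Var}_\nu(Y)$. For instance, for $p=(Y-\mathbb{E}_\nu Y)^2$ the conditional means differ by only $(1-2\alpha)\|d\|^2+O(1)$.

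You need a non-polynomial use of log-concavity along $d$. The paper's Lemma \ref{lcsv1d} does this in three steps:
Chebyshev puts mass at least $\frac34\alpha$ and $\frac34(1-\alpha)$ in windows of width $\frac1m$ around the two rescaled means;
log-concavity of $t\mapsto\nu(\text{window centered at }t)$ forces mass at least $\frac34\alpha^{k/m}(1-\alpha)^{1-k/m}$ in each of $m+1$ disjoint windows;
AM--GM then makes the total exceed $1$.
Alternatively, within your own framework, apply the Poincar\'e inequality to a Lipschitz cutoff rising from $0$ to $1$ across the middle of the segment between the projected means.
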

Note that Statement (2) in Theorem \ref{firstmain} does not require $\mu_{1},\mu_{2}$ to be log-concave.

\subsection{Quadratic classifier}
The second main result of this paper provides a sufficient condition for the existence of a quadratic classifier separating two log-concave probability measures $\mu_{1}$ and $\mu_{2}$.

A {\it quadratic set} $L\subset\mathbb{R}^{n}$ is a set of the form
\[L=\{x\in\mathbb{R}^{n}:\,\langle Ax,x\rangle+\langle x,w\rangle+s>0\},\]
for some $A\in\mathbb{R}^{n\times n}$, $w\in\mathbb{R}^{n}$, $s\in\mathbb{R}$.
\begin{theorem}\label{secondmain}
Let $0<\delta<\frac{1}{2}$. Suppose that $z_{1},z_{2}\in\mathbb{R}^{n}$ and $\Sigma_{1},\Sigma_{2}\in\mathbb{R}^{n\times n}$ are positive semidefinite matrices. If $z_{1}-z_{2}\notin\mathrm{ran}(\Sigma_{1}+\Sigma_{2})$, or if $z_{1}-z_{2}\in\mathrm{ran}(\Sigma_{1}+\Sigma_{2})$ and
\begin{equation}\label{secondmaineq1}
\|(\Sigma_{1}+\Sigma_{2})^{-1/2}(\Sigma_{1}-\Sigma_{2})(\Sigma_{1}+\Sigma_{2})^{-1/2}\|_{\mathrm{F}}+\|(\Sigma_{1}+\Sigma_{2})^{-1/2}(z_{1}-z_{2})\|^{2}\geq C\left(\ln\frac{1}{\delta}\right)^{2}\sqrt{\ln n},
\end{equation}
where $C>0$ is a large enough absolute constant, then there exists a quadratic set $L\subset\mathbb{R}^{n}$ that depends only on $z_{1},z_{2},\Sigma_{1},\Sigma_{2}$ such that
\[\mu_{1}(L)\geq 1-\delta\quad\text{and}\quad\mu_{2}(\mathbb{R}^{n}\backslash L)\geq 1-\delta,\]
for all log-concave probability measures $\mu_{1}$ and $\mu_{2}$ on $\mathbb{R}^{n}$ such that $\mu_{j}$ has mean $z_{j}$ and covariance $\Sigma_{j}$ for $j=1,2$.
\end{theorem}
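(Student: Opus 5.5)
Write $S=\Sigma_1+\Sigma_2$ and $P$ for the orthogonal projection onto $\mathrm{ran}(S)$.

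\textbf{Degenerate case $z_1-z_2\notin\mathrm{ran}(S)$.} Since $\Sigma_j\preceq S$, each $\mu_j$ is supported on the affine subspace $z_j+\mathrm{ran}(S)$. Pick $v\in\ker(S)$ with $\langle v,z_1-z_2\rangle\neq 0$. Then $\langle x,v\rangle$ equals $\langle z_1,v\rangle$ $\mu_1$-a.s. and equals $\langle z_2,v\rangle$ $\mu_2$-a.s. Take $L$ to be the half-space (a degenerate quadratic set) that cuts between these two values. This gives $\mu_1(L)=1$ and $\mu_2(\mathbb{R}^n\setminus L)=1$.

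\textbf{Main case, reduction.} Work in $\mathrm{ran}(S)$ and translate so that $z_1+z_2=0$. Change variables by $y=S^{-1/2}x$, defined via the pseudo-inverse of the Appendix. The new covariances are $\tilde\Sigma_1=\tfrac12(I+D)$ and $\tilde\Sigma_2=\tfrac12(I-D)$ on $\mathrm{ran}(S)$, where $D=S^{-1/2}(\Sigma_1-\Sigma_2)S^{-1/2}$ satisfies $-I\preceq D\preceq I$. The means are $\pm u/2$ with $u=S^{-1/2}(z_1-z_2)$. Both measures remain log-concave. The hypothesis becomes $\|D\|_F+\|u\|^2\ge C(\ln\frac1\delta)^2\sqrt{\ln n}$.

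\textbf{Main case, the test statistic.} Use the quadratic statistic
\[
Q(y)=\langle Dy,y\rangle+2\langle u,y\rangle .
\]
Compute its means: $\mathbb{E}_{\mu_1}Q-\mathbb{E}_{\mu_2}Q=\mathrm{tr}(D(\tilde\Sigma_1-\tilde\Sigma_2))+2\|u\|^2=\|D\|_F^2+2\|u\|^2$. In the quadratic term the mean parts $\langle Dz,z\rangle$ are equal for $z=\pm u/2$, and the linear term gives $\langle u,u\rangle$ per side, hence the gap.

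Next bound the fluctuations of $Q$ under each $\mu_j$. The linear part is $\psi_1$ by Borell's lemma, with standard deviation at most $\|u\|$ because $\tilde\Sigma_j\preceq I$. For the quadratic part centered at the mean, the thin-shell / KLS-type results \cite{kls,thinshell} give the following for an isotropic-after-affine-change log-concave vector $X$ and symmetric $A$:
\[
\mathrm{Var}\langle AX,X\rangle\lesssim \ln n\,\|A\|_F^2 ,
\]
obtained from a Poincaré constant $\lesssim\sqrt{\ln n}$ applied to the gradient $2AX$. The same bound holds for $\tilde\Sigma_j^{1/2}D\tilde\Sigma_j^{1/2}$, whose Frobenius norm is at most $\|D\|_F$. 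These also supply exponential-type tails: concentration of order $\sqrt{\ln n}\,\|D\|_F\cdot\ln\frac1\delta$, or more crudely squared-$\ln\frac1\delta$ via Borell/Paouris-type arguments for degree-2 polynomials.

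\textbf{Main case, the set $L$ and the obstacle.} Set $L=\{Q>\tau\}$ with $\tau$ the midpoint of the two means, and pull back to $x$-coordinates to obtain a quadratic set. $L$ succeeds once the mean gap $\|D\|_F^2+\|u\|^2$ exceeds
\[
C\ln\tfrac1\delta\,\bigl(\sqrt{\ln n}\,\|D\|_F+\|u\|\bigr)\cdot(\text{extra }\ln\tfrac1\delta).
\]
This holds when $\|D\|_F\gtrsim(\ln\frac1\delta)^2\sqrt{\ln n}$ or $\|u\|^2\gtrsim(\ln\frac1\delta)^2$, which together cover the hypothesis (up to constants, splitting by which term dominates).

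The main obstacle is the tail bound for the quadratic form under a log-concave law with $\delta$-dependence at most $(\ln\frac1\delta)^2$. I would first try the Poincaré inequality iterated via moment growth ($L^p$ norms of Lipschitz-gradient functions grow like $p\cdot$Poincaré). That gives $\|Q-\mathbb{E}Q\|_p\lesssim p\sqrt{\ln n}\,\|D\|_F$, and Markov's inequality at $p=\ln\frac1\delta$ finishes. The linear cross term $\langle D\,\mathbb{E}y, y-\mathbb{E}y\rangle$ is absorbed similarly using $\|Du\|\le\|u\|$.
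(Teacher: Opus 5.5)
\textbf{There is a genuine gap, in the final step.} Your reduction to whitened coordinates on $\mathrm{ran}(S)$, the degenerate case, and the gap computation $\mathbb{E}_{\mu_1}Q-\mathbb{E}_{\mu_2}Q=\|D\|_{\mathrm{F}}^2+2\|u\|^2$ are fine. So are the separate tail bounds for the quadratic part (Lemma \ref{nsthinshell}, deviations $\lesssim(\ln\frac1\delta)^2\sqrt{\ln n}\,\|D\|_{\mathrm{F}}$) and for the linear part (Lemma \ref{lcsubexp2}).

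The problem is your claim that the \emph{single} set $L=\{Q>\tau\}$ succeeds whenever $\|D\|_{\mathrm{F}}\gtrsim(\ln\frac1\delta)^2\sqrt{\ln n}$ or $\|u\|^2\gtrsim(\ln\frac1\delta)^2$. The second alternative does not imply your own sufficient condition
\[
\|D\|_{\mathrm{F}}^2+\|u\|^2\geq C\left(\ln\tfrac1\delta\right)^2\left(\sqrt{\ln n}\,\|D\|_{\mathrm{F}}+\|u\|\right).
\]
The quadratic part of $Q$ contributes noise $C(\ln\frac1\delta)^2\sqrt{\ln n}\,\|D\|_{\mathrm{F}}$ however large $\|u\|$ is. The term $\|D\|_{\mathrm{F}}^2$ only beats this noise once $\|D\|_{\mathrm{F}}\geq C(\ln\frac1\delta)^2\sqrt{\ln n}$.

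Concretely, let $C$ be the constant in your display and $C'$ the constant in the theorem, and set $N=(\ln\frac1\delta)^2\sqrt{\ln n}$. Take $\|D\|_{\mathrm{F}}=\frac C2N$ and $\|u\|^2=C'N$. This satisfies the theorem's hypothesis. Your condition, however, becomes
\[
\tfrac{C^2}{4}N^2+C'N\geq\tfrac{C^2}{2}N^2+C(\ln\tfrac1\delta)^2\|u\|,
\]
which fails as soon as $N>4C'/C^2$ (e.g.\ for fixed $\delta$ and large $n$), no matter how large the absolute constant $C'$ is.

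Sharper tails do not repair this. Even an $L^p$ bound of order $p\sqrt{\ln n}\,\|D\|_{\mathrm{F}}$, taken at $\|D\|_{\mathrm{F}}\approx\ln\frac1\delta\sqrt{\ln n}$, would require $\|u\|^2\gtrsim(\ln\frac1\delta)^2\ln n$. The hypothesis only guarantees $\|u\|^2\gtrsim(\ln\frac1\delta)^2\sqrt{\ln n}$. The $\sqrt{\ln n}$ loss multiplies $\|D\|_{\mathrm{F}}$, so in this intermediate regime the quadratic term is noise that the mean signal cannot absorb.

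\textbf{The fix is to use different classifiers in the two regimes.}
\begin{itemize}
\item If $\|u\|^2\geq\frac{C'}{2}N$, drop the quadratic term and take the half-space $\{\langle u,y\rangle>0\}$ in your centered, whitened coordinates. Each side has margin $\frac12\|u\|^2$ and noise $\lesssim\ln\frac1\delta\,\|u\|$, so it separates since $\|u\|\gtrsim\ln\frac1\delta$ there.
\item If $\|D\|_{\mathrm{F}}\geq\frac{C'}{2}N$, your statistic $Q$ does work. In that case $\frac12\|D\|_{\mathrm{F}}^2+\|u\|^2$ dominates $CN\|D\|_{\mathrm{F}}+C\ln\frac1\delta\,\|u\|$.
\end{itemize}
This case split is exactly the structure of the paper's proof. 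It uses Lemma \ref{meanseparation} in the mean-dominated case and Lemma \ref{covseparation} in the covariance-dominated case. The latter uses the form $\langle(B_{\widetilde{\nu}_1}-B_{\widetilde{\nu}_2})x,x\rangle$ after projecting off the span of the means. Your $Q$ would be a harmless variant there, avoiding the projection and the $+4$ correction from Lemma \ref{gip}.
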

\subsection{The logarithmic factor}
The key ingredient used in the proof of Statement (2) in Theorem \ref{firstmain} and the proof of Theorem \ref{secondmain} is the non-isotropic thin shell bound Lemma \ref{nsthinshell} below. Unfortunately, this result involves a $\sqrt{\ln n}$ factor, and it is only because of this that there are $\sqrt{\ln n}$ factors in Statement (2) in Theorem \ref{firstmain} and Theorem \ref{secondmain}. It is not known if the $\sqrt{\ln n}$ factor in Lemma \ref{nsthinshell} can be removed. It is, however, known that the thin shell conjecture \cite{thinshell} holds without any logarithmic factor.
\subsection{Third regime}
There is an important case not covered by Regime 1 or Regime 2. Suppose that the dimension $n=O(1)$ is not too large, $z_{1}=z_{2}=0$ and $\Sigma_{1}=I$ and $\|\Sigma_{2}\|_{\mathrm{op}}=o(1)$ is small. In this case,
\[(\Sigma_{1}+\Sigma_{2})^{-1/2}(\Sigma_{1}-\Sigma_{2})(\Sigma_{1}+\Sigma_{2})^{-1/2}\approx I\]
has Frobenius norm of order $1$, and so the assumption of Theorem \ref{secondmain} is not satisfied. However, we believe that in this case, some version of the conclusion of Theorem \ref{secondmain} may still hold with the $\delta$ being some polynomial of $\frac{1}{\|\Sigma_{2}\|}$. For example, suppose that $\mu_{1}$ is the uniform probability distribution on the Euclidean ball $B(0,a)$ in $\mathbb{R}^{n}$ centered at $0$ with radius $a$, where $a>0$ is such that $\mu_{1}$ has covariance $\Sigma_{1}=I$. Suppose that $\mu_{2}$ is the uniform probability distribution on the Euclidean ball $B(0,ba)$ in $\mathbb{R}^{n}$ centered at $0$ with radius $ba$, where $0<b<1$ is small. Then $\mu_{2}$ has covariance $\Sigma_{2}=b^{2}I$. Taking $L=\mathbb{R}^{n}\backslash B(0,ba)$, we have $\mu_{1}(L)=1-b^{n}$ and $\mu_{2}(\mathbb{R}^{n}\backslash L)=\mu_{2}(B(0,ba))=1$.

We also believe that more generally, if $z_{1}=z_{2}=0$ and $\|\Sigma_{2}^{-1/2}\Sigma_{1}\Sigma_{2}^{-1/2}-I\|_{\mathrm{F}}$ is large, then the conclusion of Theorem \ref{secondmain} still holds, but even so, it is not clear what is the optimal dependence of $\delta>0$ on $\Sigma_{1}$ and $\Sigma_{2}$.

\subsection{Organization of the paper}
The rest of this paper is organized as follows. In Section \ref{firstmainp1proofsection}, we prove Statement (1) in Theorem \ref{firstmain}. In Section \ref{nsthinshellsection}, we derive a non-isotropic thin shell bound using \cite{kls}. This is the key ingredient for the proofs in the next two sections. In Section \ref{firstmainp2proofsection}, we prove Statement (2) in Theorem \ref{firstmain}. In Section \ref{secondmainproofsection}, we prove Theorem \ref{secondmain}.

\section{Proof of the first main result part (1)}\label{firstmainp1proofsection}
In this section, we prove Statement (1) in Theorem \ref{firstmain}. The idea of the proof is as follows.

First, since log-concavity is invariant under translation, without loss of generality, we can assume that $z_{2}=0$. Second, since log-concavity is also invariant under pushforward by any linear transformation, we can further reduce to the case where $\Sigma_{1}=I$ is the identity matrix and $\Sigma_{2}$ is a diagonal matrix. The key lemma that enables this reduction step is Lemma \ref{changeofbasislemma}.

So now, we need to show that if $z\in\mathbb{R}^{n}$ with $\|z\|\leq c_{3}$ and $D\in\mathbb{R}^{n\times n}$ is a diagonal matrix with $\|D-I\|_{\mathrm{F}}\leq c_{3}$, where $c_{3}>0$ is a small enough absolute constant, then there exist a probability measure $\mu_{1}$ on $\mathbb{R}^{n}$ with mean $z$ and covariance $I$ and a probability measure $\mu_{2}$ on $\mathbb{R}^{n}$ with mean $0$ and covariance $D$ such that $\alpha\mu_{1}+(1-\alpha)\mu_{2}$ is log-concave for all $\alpha\in[0,1]$. This is exactly the statement of Lemma \ref{firstmainp1diagonal} below.

But how do we construct these two measures $\mu_{1},\mu_{2}$ in Lemma \ref{firstmainp1diagonal}? The most natural attempt is to take $\mu_{1}=\mathcal{N}(z,I)$ and $\mu_{2}=\mathcal{N}(0,D)$ to be normal distributions. Unfortunately, the mixture of these two normal distributions is not necessarily log-concave. The mixture, however, is log-concave when restricted to the hypercube $[-1,1]^{n}$. But once we restrict the normal distributions $\mathcal{N}(z,I)$ and $\mathcal{N}(0,D)$ to the hypercube $[-1,1]^{n}$, the means and covariances are no longer $z,0,I,D$.

The measures $\mu_{1},\mu_{2}$ in Lemma \ref{firstmainp1diagonal} are constructed as follows. We let $\nu_{1}$ to be the restriction of the normal distribution $\mathcal{N}(w,I)$ to the hypercube $[-1,1]^{n}$, where $w\in\mathbb{R}^{n}$ is a suitably chosen vector, and we let $\nu_{2}$ to be the restriction of $\mathcal{N}(0,E)$, where $E\in\mathbb{R}^{n\times n}$ is a suitably chosen diagonal matrix. Then $\mu_{1}$ and $\mu_{2}$ are the pushforward measures of $\nu_{1}$ and $\nu_{2}$, respectively, by some suitably chosen diagonal matrix $G$.

This section is outlined as follows.

In Subsection \ref{firstmainp1proofsubsection1}, we prove Lemma \ref{Fconcave}, which says that if $w\in\mathbb{R}^{n}$ is close enough $0$ and $E\in\mathbb{R}^{n\times n}$ is a diagonal matrix close enough to $I$, then any weighted average of the two functions $x\mapsto e^{-\|x-w\|^{2}}$ and $x\mapsto e^{-\langle Ex,x\rangle}$ is log-concave on the hypercube $[-1,1]^{n}$. This result is essential for proving the log-concavity of the mixture of the measures we construct in the next subsection.

In Subsection \ref{firstmainp1proofsubsection2}, we prove Lemma \ref{firstmainp1diagonal}, which says that Statement (1) in Theorem \ref{firstmain} holds when $\Sigma_{1}=I$, $\Sigma_{2}$ is a diagonal matrix and $z_{2}=0$.

In Subsection \ref{firstmainp1proofsubsection3}, we complete the proof of Statement (1) in Theorem \ref{firstmain} by reducing to the case when $\Sigma_{1}=I$, $\Sigma_{2}$ is a diagonal matrix, and $z_{2}=0$ using Lemma \ref{changeofbasislemma}.
\subsection{Log-concave function}\label{firstmainp1proofsubsection1}
\begin{lemma}\label{1dddlemma}
Suppose that $g:\mathbb{R}\to\mathbb{R}$ is a smooth function with $|g'(0)|\leq 1$ and $g''(0)\leq 1$, then
\[\left.\frac{d^{2}}{dt^{2}}\ln(1+e^{g(t)})\right|_{t=0}\leq 1.\]
\end{lemma}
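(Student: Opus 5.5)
Compute directly. Let $h(t)=\ln(1+e^{g(t)})$ and $\sigma(u)=\frac{e^{u}}{1+e^{u}}\in(0,1)$. Then $h'=\sigma(g)g'$ and
\[h''(t)=\sigma(g(t))g''(t)+\sigma(g(t))(1-\sigma(g(t)))\,g'(t)^{2},\]
using $\sigma'=\sigma(1-\sigma)$. Evaluate at $t=0$ and write $p=\sigma(g(0))\in(0,1)$.

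Bound each term separately. For the first term, note $g''(0)\leq 1$ and $p\in(0,1)$. If $g''(0)\geq0$, then $pg''(0)\leq p$. If $g''(0)<0$, then $pg''(0)<0\leq p$. In both cases $pg''(0)\leq p$. For the second term, $|g'(0)|\leq1$ gives $p(1-p)g'(0)^{2}\leq p(1-p)$.

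Combining the two bounds,
\[h''(0)\leq p+p(1-p)=2p-p^{2}=1-(1-p)^{2}\leq 1,\]
which is the claimed inequality.

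The only point needing care is that $g''(0)$ may be negative. This is why the first term is bounded by $p$ instead of by $g''(0)$. No other obstacle arises: the argument is a direct chain-rule computation.
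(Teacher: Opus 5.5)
Your proof is correct and is essentially the paper's argument. The paper computes the same second derivative at $t=0$, namely $\frac{e^{g}(g'^{2}+g'')+e^{2g}g''}{(1+e^{g})^{2}}=p\,g''+p(1-p)g'^{2}$ with $p=\frac{e^{g(0)}}{1+e^{g(0)}}$, and bounds it termwise by $\frac{2e^{g(0)}+e^{2g(0)}}{(1+e^{g(0)})^{2}}=2p-p^{2}\le 1$, exactly as you do. Your case split on the sign of $g''(0)$ is harmless but unnecessary: multiplying $g''(0)\le 1$ by $p>0$ already gives $p\,g''(0)\le p$.
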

\begin{proof}
The first derivative is given by
\[\frac{d}{dt}\ln(1+e^{g(t)})=\frac{e^{g(t)}g'(t)}{1+e^{g(t)}}.\]
The second derivative is given by
\begin{eqnarray*}
\frac{d^{2}}{dt^{2}}\ln(1+e^{g(t)})&=&
\frac{e^{g(t)}(g'(t)^{2}+g''(t))(1+e^{g(t)})-e^{2g(t)}g'(t)^{2}}{(1+e^{g(t)})^{2}}\\&=&
\frac{e^{g(t)}(g'(t)^{2}+g''(t))+e^{2g(t)}g''(t)}{(1+e^{g(t)})^{2}}.
\end{eqnarray*}
Since $|g'(0)|\leq 1$ and $g''(0)\leq 1$, it follows that
\[\left.\frac{d^{2}}{dt^{2}}\ln(1+e^{g(t)})\right|_{t=0}=
\frac{e^{g(0)}(g'(0)^{2}+g''(0))+e^{2g(0)}g''(0)}{(1+e^{g(0)})^{2}}\leq
\frac{2e^{g(0)}+e^{2g(0)}}{(1+e^{g(0)})^{2}}\leq 1.\]
\end{proof}
\begin{lemma}\label{Fconcave}
Suppose that $w\in\mathbb{R}^{n}$ with $\|w\|\leq\frac{1}{4}$ and $E\in\mathbb{R}^{n\times n}$ is a diagonal matrix with $\|E-I\|_{\mathrm{F}}\leq\frac{1}{4}$. Let $a,b>0$. Then the function $f:[-1,1]^{n}\to\mathbb{R}$ defined by
\[f(x)=ae^{-\|x-w\|^{2}}+be^{-\langle Ex,x\rangle}\]
is log-concave, i.e., the map $x\mapsto\ln(f(x))$ is concave.
\end{lemma}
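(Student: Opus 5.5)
The idea is to factor out one of the two Gaussians, so that $\ln f$ becomes a strongly concave quadratic plus a term of the form $\ln(1+e^{g})$, and then to control the latter with Lemma \ref{1dddlemma}. Both summands of $f$ are defined and smooth on all of $\mathbb{R}^{n}$. Writing
\[f(x)=ae^{-\|x-w\|^{2}}\bigl(1+e^{g(x)}\bigr),\qquad g(x)=\ln\tfrac{b}{a}+\|x-w\|^{2}-\langle Ex,x\rangle,\]
we get
\[\ln f(x)=\ln a-\|x-w\|^{2}+\ln\bigl(1+e^{g(x)}\bigr).\]

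Concavity of $\ln f$ on the convex set $[-1,1]^{n}$ reduces to concavity along line segments. So it suffices to show that for every $x\in[-1,1]^{n}$ and every unit vector $u\in\mathbb{R}^{n}$, the function $\phi(t)=\ln f(x+tu)$ satisfies $\phi''(0)\leq 0$. The term $-\|x+tu-w\|^{2}$ contributes exactly $-2$ to $\phi''(0)$. For the remaining term I apply Lemma \ref{1dddlemma} to $h(t)=g(x+tu)$. This will give a contribution of at most $1$, hence $\phi''(0)\leq -1<0$.

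It remains to verify the hypotheses $|h'(0)|\leq 1$ and $h''(0)\leq 1$. Expanding,
\[g(x)=\ln\tfrac{b}{a}+\|w\|^{2}+\langle (I-E)x,x\rangle-2\langle x,w\rangle,\]
so
\[h'(0)=2\langle (I-E)x,u\rangle-2\langle w,u\rangle,\qquad h''(0)=2\langle (I-E)u,u\rangle.\]
The key point uses that $E$ is diagonal and $x$ lies in the cube:
\[\|(I-E)x\|^{2}=\sum_{i}(1-E_{i,i})^{2}x_{i}^{2}\leq\|I-E\|_{\mathrm{F}}^{2}\leq\tfrac{1}{16}.\]
Together with $\|w\|\leq\frac14$, this gives $|h'(0)|\leq 2\cdot\frac14+2\cdot\frac14=1$. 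Also $h''(0)\leq 2\|I-E\|_{\mathrm{op}}\leq 2\|I-E\|_{\mathrm{F}}\leq\frac12\leq 1$.

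The only delicate step is the gradient bound. This is where both the diagonal structure of $E$ and the restriction to the hypercube enter, through $\|(I-E)x\|\leq\|I-E\|_{\mathrm{F}}\max_{i}|x_{i}|$. Everything else is a direct computation.

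Points on the boundary of the cube cause no difficulty. The estimate $\phi''(0)\leq -1$ holds at every point of $[-1,1]^{n}$ and for every direction, because $f$ is smooth on $\mathbb{R}^{n}$. So the restriction of $\ln f$ to any segment in the cube has nonpositive second derivative throughout, and is therefore concave.
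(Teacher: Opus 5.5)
Your proof is correct and follows the paper's argument essentially step for step: you factor out $ae^{-\|x-w\|^{2}}$, bound $|h'(0)|\leq 1$ and $h''(0)\leq 1$ using that $E$ is diagonal and $x\in[-1,1]^{n}$, and invoke Lemma \ref{1dddlemma} to get $\phi''(0)\leq -2+1=-1$. The only cosmetic difference is that you work at every point of the closed cube, which is justified by the smoothness of $f$ on $\mathbb{R}^{n}$, whereas the paper fixes $x$ in the open cube $(-1,1)^{n}$.
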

\begin{proof}
Fix $x\in(-1,1)^{n}$ and $y\in\mathbb{R}^{n}$ with $\|y\|=1$. It suffices to show that the double derivative of $f(x+ty)$ with respect to $t$ evaluated at $t=0$ is less than or equal to $0$, i.e.,
\[\left.\frac{d^{2}}{dt^{2}}\ln f(x+ty)\right|_{t=0}\leq 0.\]
We have
\begin{eqnarray*}
\ln f(x+ty)&=&\ln\left(ae^{-\|x+ty-w\|^{2}}+be^{-\langle E(x+ty),(x+ty)\rangle}\right)\\&=&
\ln\left(ae^{-\|x+ty-w\|^{2}}\right)+\ln\left(1+\frac{b}{a}e^{\|x+ty-w\|^{2}-\langle E(x+ty),(x+ty)\rangle}\right)\\&=&
\ln a-\|x+ty-w\|^{2}+\ln\left(1+e^{g(t)}\right),
\end{eqnarray*}
where we define the function $g:\mathbb{R}\to\mathbb{R}$ by
\[g(t):=\ln\left(\frac{b}{a}\right)+\|x+ty-w\|^{2}-\langle E(x+ty),x+ty\rangle,\quad t\in\mathbb{R}.\]
If we can show that $|g'(0)|\leq 1$ and $g''(0)\leq 1$, then the proof is complete, since
\begin{eqnarray*}
\left.\frac{d^{2}}{dt^{2}}\ln f(x+ty)\right|_{t=0}&=&-2\|y\|^{2}+\left.\frac{d^{2}}{dt^{2}}\ln(1+e^{g(t)})\right|_{t=0}\\&=&
-2+\left.\frac{d^{2}}{dt^{2}}\ln(1+e^{g(t)})\right|_{t=0}\leq -1,
\end{eqnarray*}
where the second equality follows from the fact that $\|y\|=1$ and the last inequality follows from Lemma \ref{1dddlemma}.

Thus, it remains to show that $|g'(0)|\leq 1$ and $g''(0)\leq 1$. We have
\[g'(t)=2\langle x-w,y\rangle+2t\|y\|^{2}-2\langle Ex,y\rangle-2t\langle Ey,y\rangle,\]
for all $t\in\mathbb{R}$, and so
\[g'(0)=2\langle x-w-Ex,y\rangle\quad\text{and}\quad g''(0)=2(\|y\|^{2}-\langle Ey,y\rangle).\]
Since $\|y\|=1$ and $\|w\|\leq\frac{1}{4}$, this implies that
\begin{eqnarray*}
|g'(0)|\leq2\|x-w-Ex\|&\leq&
2\|x-Ex\|+\frac{1}{2}\\&=&
2\left(\sum_{i=1}^{n}(1-E_{i,i})^{2}\langle x,e_{i}\rangle^{2}\right)^{1/2}+\frac{1}{2}\\&\leq&
2\left(\sum_{i=1}^{n}(1-E_{i,i})^{2}\right)^{1/2}+\frac{1}{2}\leq 1,
\end{eqnarray*}
where $E_{i,i}$ is the $(i,i)$-entry of the diagonal matrix $E$, the fourth step follows from the fact that $x\in[-1,1]^{n}$ and the last step follows from the assumption $\|E-I\|_{\mathrm{F}}\leq\frac{1}{4}$. We also have
\[g''(0)=2\langle(I-E)y,y\rangle\leq2\|I-E\|_{\mathrm{op}}\leq\frac{1}{2}.\]
The result follows.
\end{proof}
\subsection{Diagonal covariance}\label{firstmainp1proofsubsection2}
In the next lemma, we prove some properties of some univariate functions that arise from the proof of Lemma \ref{firstmainp1diagonal} below. These functions enable us to select the suitable parameters for the probability measures we construct in order to fit the desired first and second moments. Indeed, $h_{1}(r)$ and $h_{2}(r)$ are the mean and second moment, respectively, of some random variable, whereas $h_{4}(r)$ is the second moment of another random variable. Property (3) in Lemma \ref{functionhlemma} below ensures that $h_{3}$ is invertible around $r=0$ and its inverse behaves well, whereas property (4) ensures the invertibility of $h_{4}$ around $\lambda=1$ and the well-behavedness of its inverse.
\begin{lemma}\label{functionhlemma}
Define the functions $h_{1},h_{2},h_{3},h_{4}$ on $\mathbb{R}$ by
\[h_{1}(r)=\frac{\int_{-1}^{1}te^{-(t-r)^{2}}\,dt}{\int_{-1}^{1}e^{-(t-r)^{2}}\,dt},\quad h_{2}(r)=\frac{\int_{-1}^{1}t^{2}e^{-(t-r)^{2}}\,dt}{\int_{-1}^{1}e^{-(t-r)^{2}}\,dt},\]
\[h_{3}(r)=\frac{h_{1}(r)}{\sqrt{h_{2}(r)-h_{1}(r)^{2}}},\quad h_{4}(\lambda)=\frac{\int_{-1}^{1}t^{2}e^{-\lambda t^{2}}\,dt}{\int_{-1}^{1}e^{-\lambda t^{2}}\,dt},\]
for $r,\lambda\in\mathbb{R}$. Then there exists an absolute constant $0<c_{1}<\frac{1}{2}$ such that
\begin{enumerate}[(1)]
\item $|h_{1}(r)|\leq|r|$ for all $r\in[-c_{1},c_{1}]$;
\item $|h_{2}(r)-h_{2}(0)|\leq|r|$ and $h_{2}(r)-h_{1}(r)^{2}>0$ for all $r\in[-c_{1},c_{1}]$;
\item $h_{3}$ is strictly increasing on $[-c_{1},c_{1}]$ and $|h_{3}(r)|\geq|r|$ for all $r\in[-c_{1},c_{1}]$;
\item $h_{4}$ is strictly decreasing on $[1-c_{1},1+c_{1}]$ and $|h_{4}(\lambda)-h_{4}(1)|\geq 0.06|\lambda-1|$ for all $\lambda\in[1-c_{1},1+c_{1}]$.
\end{enumerate}
\end{lemma}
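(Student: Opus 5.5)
The idea is to view $h_{1},h_{2},h_{4}$ as moments of one-parameter exponential families on $[-1,1]$ and to use the standard cumulant identities. Put $Z(r)=\int_{-1}^{1}e^{-(t-r)^{2}}\,dt$ and expand $e^{-(t-r)^{2}}=e^{-r^{2}}e^{2rt}e^{-t^{2}}$. Then $h_{1}(r)$ and $h_{2}(r)$ are the first and second moments of the probability measure $\rho_{r}(dt)\propto e^{2rt}e^{-t^{2}}\mathbf{1}_{[-1,1]}(t)\,dt$. Differentiating under the integral gives
\[h_{1}'(r)=2\,\mathrm{Var}_{\rho_{r}}(t),\qquad h_{2}'(r)=2\left(\mathbb{E}_{\rho_{r}}t^{3}-h_{1}(r)h_{2}(r)\right).\]
The same reasoning applied to $\sigma_{\lambda}(dt)\propto e^{-\lambda t^{2}}\mathbf{1}_{[-1,1]}\,dt$ gives
\[h_{4}'(\lambda)=-\mathrm{Var}_{\sigma_{\lambda}}(t^{2}).\]
All these quantities are smooth in $r$ and $\lambda$, and on compact parameter sets their derivatives are bounded by absolute constants, since $|t|\le 1$.

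\textbf{Properties (1) and (2).} By symmetry $h_{1}(0)=0$, and $h_{1}$ is odd. Since $|t|\le1$, we have $h_{1}'(r)=2\mathrm{Var}_{\rho_{r}}(t)\le 2\mathbb{E}_{\rho_{r}}t^{2}$. At $r=0$ this is strictly less than $\tfrac12$: the density $e^{-t^{2}}$ is decreasing in $|t|$, so $h_{2}(0)<\int_{-1}^{1}t^{2}\,dt/2=\tfrac13$, hence $h_{1}'(0)=2h_{2}(0)<\tfrac23<1$. By continuity, $h_{1}'<1$ on some interval $[-c_{1},c_{1}]$, and the mean value theorem then gives $|h_{1}(r)|\le|r|$.

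For (2), $|h_{2}'|\le 2(1+1)=4$ everywhere. Moreover $h_{2}$ is even, so $h_{2}'(0)=0$. Taking $c_{1}$ small enough makes $|h_{2}'|\le1$ on the interval, which gives $|h_{2}(r)-h_{2}(0)|\le|r|$. Finally, $h_{2}-h_{1}^{2}=\mathrm{Var}_{\rho_{r}}(t)$ is the variance of a nondegenerate distribution, so it is positive; no smallness is needed for this part.

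\textbf{Property (3).} Write $v(r)=h_{2}(r)-h_{1}(r)^{2}=h_{1}'(r)/2$. Then
\[h_{3}=\frac{h_{1}}{\sqrt v},\qquad h_{3}(0)=0,\qquad h_{3}'(0)=\frac{h_{1}'(0)}{\sqrt{v(0)}}=\frac{2v(0)}{\sqrt{v(0)}}=2\sqrt{h_{2}(0)}.\]
So we need the numerical fact $h_{2}(0)>\tfrac14$, which gives $h_{3}'(0)>1$. I expect this to be the main obstacle, because the margin is small. A crude bound is not enough: comparing $\int_{-1}^{1}t^{2}e^{-t^{2}}\,dt$ with $\int_{-1}^{1}e^{-t^{2}}\,dt$ using $e^{-t^{2}}\ge e^{-1}$ gives only $h_{2}(0)\ge e^{-1}/3\approx0.12$, which falls short of $\tfrac14$. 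Instead I will evaluate numerically, using the series $\int_{0}^{1}t^{2k}e^{-t^{2}}\,dt=\sum_{m}(-1)^{m}/(m!(2k+2m+1))$ with alternating-series error bounds. This gives $\int_{0}^{1}e^{-t^{2}}\,dt\approx0.7468$ and $\int_{0}^{1}t^{2}e^{-t^{2}}\,dt\approx0.1894$, so $h_{2}(0)\approx0.2536>0.25$. Once $h_{3}'(0)>1$ is established, continuity of $h_{3}'$ yields $h_{3}'>1$ on $[-c_{1},c_{1}]$ for $c_{1}$ small. This gives strict monotonicity, and $|h_{3}(r)|\ge|r|$ follows from the mean value theorem since $h_{3}(0)=0$.

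\textbf{Property (4).} Here $h_{4}'(\lambda)=-\mathrm{Var}_{\sigma_{\lambda}}(t^{2})<0$, so $h_{4}$ is strictly decreasing. It remains to show $\mathrm{Var}_{\sigma_{\lambda}}(t^{2})\ge0.06$ on a neighborhood of $\lambda=1$, after which the mean value theorem gives the bound. At $\lambda=1$ this variance equals $m_{4}-m_{2}^{2}$, where $m_{2}\approx0.2536$. For the fourth moment, $\int_{0}^{1}t^{4}e^{-t^{2}}\,dt\approx0.1002$, so $m_{4}\approx0.1342$ and
\[\mathrm{Var}_{\sigma_{1}}(t^{2})\approx0.1342-0.0643\approx0.0699>0.06.\]
By continuity, the inequality $\mathrm{Var}_{\sigma_{\lambda}}(t^{2})\ge0.06$ persists on $[1-c_{1},1+c_{1}]$, where the continuity modulus can be made explicit because the derivatives are uniformly bounded.

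Finally, take $c_{1}$ to be the minimum of the constants required in the four parts, and also less than $\tfrac12$. The only genuinely delicate points are the two numerical inequalities $h_{2}(0)>\tfrac14$ and $\mathrm{Var}_{\sigma_{1}}(t^{2})>0.06$, both verified by the series evaluations above.
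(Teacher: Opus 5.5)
Your proof is correct and follows essentially the paper's route: you compute $h_1'(0)=2h_2(0)$, $h_2'(0)=0$, $h_3'(0)=2\sqrt{h_2(0)}\approx 1.007$ and $h_4'(1)=-\mathrm{Var}_{\sigma_1}(t^2)\approx -0.069$, extend the derivative bounds to a small interval by continuity, and conclude with the mean value theorem, while your cumulant identities and series-based numerics make the paper's bare numerical values more transparent. One small slip: in part (1), the phrase ``at $r=0$ this is strictly less than $\tfrac12$'' is false if it refers to $h_1'(0)=2h_2(0)\approx 0.507$, and it would contradict your own later fact $h_2(0)>\tfrac14$; however, the argument only needs $h_1'(0)<1$, which you derive correctly.
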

\begin{proof}
We have $h_{1}(0)=0$,
\begin{eqnarray*}
h_{1}'(0)&=&\frac{(\int_{-1}^{1}2t^{2}e^{-t^{2}}\,dt)(\int_{-1}^{1}e^{-t^{2}}\,dt)-(\int_{-1}^{1}te^{-t^{2}}\,dt)(\int_{-1}^{1}2te^{-t^{2}}\,dt)}{(\int_{-1}^{1}e^{-t^{2}}\,dt)^{2}}\\&=&
\frac{\int_{-1}^{1}2t^{2}e^{-t^{2}}\,dt}{\int_{-1}^{1}e^{-t^{2}}\,dt}=0.507\ldots,
\end{eqnarray*}
$\displaystyle h_{2}(0)=\frac{\int_{-1}^{1}t^{2}e^{-t^{2}}\,dt}{\int_{-1}^{1}e^{-t^{2}}\,dt}>0$,
\begin{eqnarray*}
h_{2}'(0)&=&\frac{(\int_{-1}^{1}2t^{3}e^{-t^{2}}\,dt)(\int_{-1}^{1}e^{-t^{2}}\,dt)-(\int_{-1}^{1}t^{2}e^{-t^{2}}\,dt)(\int_{-1}^{1}2te^{-t^{2}}\,dt)}{(\int_{-1}^{1}e^{-t^{2}}\,dt)^{2}}=0,
\end{eqnarray*}
$h_{3}(0)=\frac{h_{1}(0)}{\sqrt{h_{2}(0)-h_{1}(0)^{2}}}=0$,
\[h_{3}'(0)=\frac{h_{1}'(0)}{\sqrt{h_{2}(0)-h_{1}(0)^{2}}}+0=2\sqrt{\frac{\int_{-1}^{1}t^{2}e^{-t^{2}}\,dt}{\int_{-1}^{1}e^{-t^{2}}\,dt}}=1.007\ldots,\]
\begin{eqnarray*}
h_{4}'(1)&=&\frac{(\int_{-1}^{1}-t^{4}e^{-t^{2}}\,dt)(\int_{-1}^{1}e^{-t^{2}}\,dt)-(\int_{-1}^{1}t^{2}e^{-t^{2}}\,dt)(\int_{-1}^{1}-t^{2}e^{-t^{2}}\,dt)}{(\int_{-1}^{1}e^{-t^{2}}\,dt)^{2}}\\&=&
-0.069\ldots.
\end{eqnarray*}
Thus, by continuity, there exists $c_{1}>0$ such that
\[|h_{1}'(r)|\leq 1,\quad |h_{2}'(r)|\leq 1,\quad h_{2}(r)-h_{1}(r)^{2}>0,\quad h_{3}'(r)\geq 1\quad\text{for all}\; r\in[-c_{1},c_{1}],\]
and
\[h_{4}'(\lambda)\leq-0.06\quad\text{for all}\;\lambda\in[1-c_{1},1+c_{1}].\]
Thus the result follows by the mean value theorem and the fact that $h_{1}(0)=h_{3}(0)=0$.
\end{proof}
\begin{lemma}\label{firstmainp1diagonal}
There exists an absolute constant $0<c_{3}<1$ such that the following holds. Suppose that $z\in\mathbb{R}^{n}$ with $\|z\|\leq c_{3}$ and $D\in\mathbb{R}^{n\times n}$ is diagonal with $\|D-I\|_{\mathrm{F}}\leq c_{3}$. Then there exist probability measures $\mu_{1}$ and $\mu_{2}$ on $\mathbb{R}^{n}$ such that
\begin{enumerate}[(1)]
\item $\mu_{1}$ has mean $z$ and covariance $I$;
\item $\mu_{2}$ has mean $0$ and covariance $D$;
\item $\alpha\mu_{1}+(1-\alpha)\mu_{2}$ is log-concave for all $\alpha\in[0,1]$.
\end{enumerate}
\end{lemma}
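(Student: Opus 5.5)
The plan is to realize $\mu_{1},\mu_{2}$ as linear images of product measures on the cube $[-1,1]^{n}$, exactly as outlined at the start of this section. For $w\in\mathbb{R}^{n}$ and a diagonal $E$ with positive diagonal entries, let $\nu_{1}$ be the probability measure on $[-1,1]^{n}$ with density proportional to $e^{-\|x-w\|^{2}}$. Let $\nu_{2}$ be the probability measure on $[-1,1]^{n}$ with density proportional to $e^{-\langle Ex,x\rangle}$. Let $G=\mathrm{diag}((g_{i})_{1\leq i\leq n})$ with $g_{i}>0$, and set $\mu_{j}=G_{\#}\nu_{j}$.

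Both $\nu_{1}$ and $\nu_{2}$ are product measures. Under $\nu_{1}$ the $i$-th coordinate has mean $h_{1}(w_{i})$ and variance $h_{2}(w_{i})-h_{1}(w_{i})^{2}$. Under $\nu_{2}$ it has mean $0$ and variance $h_{4}(E_{i,i})$. Hence $\mu_{1}$ and $\mu_{2}$ have diagonal covariances, and $\mu_{2}$ has mean $0$ automatically. The moment conditions reduce to three scalar equations for each $i$:
\[g_{i}^{2}\bigl(h_{2}(w_{i})-h_{1}(w_{i})^{2}\bigr)=1,\qquad g_{i}h_{1}(w_{i})=z_{i},\qquad g_{i}^{2}h_{4}(E_{i,i})=D_{i,i}.\]

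Step 1: choose $w$. The first two equations together say $h_{3}(w_{i})=z_{i}$, with $g_{i}=(h_{2}(w_{i})-h_{1}(w_{i})^{2})^{-1/2}$. By Lemma \ref{functionhlemma}(3), $h_{3}$ is strictly increasing and continuous on $[-c_{1},c_{1}]$, and $h_{3}(\pm c_{1})$ has absolute value at least $c_{1}$. So if $c_{3}\leq c_{1}$, each $z_{i}$ (with $|z_{i}|\leq\|z\|\leq c_{3}$) has a unique preimage $w_{i}\in[-c_{1},c_{1}]$. Moreover $|w_{i}|\leq|h_{3}(w_{i})|=|z_{i}|$, so $\|w\|\leq\|z\|\leq c_{3}\leq\frac14$. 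Lemma \ref{functionhlemma}(2) guarantees the variance is positive, so $g_{i}$ is well defined.

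Step 2: choose $E$. This is the step I expect to be the main obstacle, because it requires $\|E-I\|_{\mathrm{F}}$, and not just each entry, to stay small. Write $v_{i}=h_{2}(w_{i})-h_{1}(w_{i})^{2}$; we need $h_{4}(E_{i,i})=D_{i,i}v_{i}$. Note $h_{4}(1)=h_{2}(0)$. By Lemma \ref{functionhlemma}(1),(2),
\[|v_{i}-h_{2}(0)|\leq|w_{i}|+w_{i}^{2}\leq 2|z_{i}|,\]
and therefore
\[|D_{i,i}v_{i}-h_{4}(1)|\leq|D_{i,i}-1|\,v_{i}+2|z_{i}|\leq C_{0}\bigl(|D_{i,i}-1|+|z_{i}|\bigr).\]
Taking $c_{3}$ small, this is at most $0.06c_{1}$. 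By Lemma \ref{functionhlemma}(4), $h_{4}$ is strictly decreasing on $[1-c_{1},1+c_{1}]$ and its image contains the interval of radius $0.06c_{1}$ around $h_{4}(1)$. The intermediate value theorem then gives a solution $E_{i,i}\in[1-c_{1},1+c_{1}]$, in particular $E_{i,i}>0$. The same property (4) gives
\[|E_{i,i}-1|\leq\frac{C_{0}}{0.06}\bigl(|D_{i,i}-1|+|z_{i}|\bigr).\]
Summing squares yields $\|E-I\|_{\mathrm{F}}\leq C_{0}'(\|D-I\|_{\mathrm{F}}+\|z\|)\leq 2C_{0}'c_{3}\leq\frac14$ for $c_{3}$ small enough. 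With $w$, $E$, $G$ fixed this way, items (1) and (2) hold.

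Step 3: log-concavity. Since $G$ is invertible and linear, $\alpha\mu_{1}+(1-\alpha)\mu_{2}=G_{\#}(\alpha\nu_{1}+(1-\alpha)\nu_{2})$, and pushforward by a linear map preserves log-concavity. The measure $\alpha\nu_{1}+(1-\alpha)\nu_{2}$ has density
\[ae^{-\|x-w\|^{2}}+be^{-\langle Ex,x\rangle}\]
on $[-1,1]^{n}$ and $0$ outside, where $a=\alpha/Z_{1}$ and $b=(1-\alpha)/Z_{2}$ with $Z_{1},Z_{2}$ the normalizing constants. For $\alpha\in(0,1)$ this density is log-concave on the cube by Lemma \ref{Fconcave}, whose hypotheses $\|w\|\leq\frac14$ and $\|E-I\|_{\mathrm{F}}\leq\frac14$ were arranged above. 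Since the cube is convex, the density is log-concave on all of $\mathbb{R}^{n}$. For $\alpha\in\{0,1\}$ the measure is a single Gaussian restricted to a convex set, which is clearly log-concave. This proves (3).
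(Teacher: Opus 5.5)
Your proposal is correct and follows essentially the same route as the paper: restricted Gaussians on $[-1,1]^{n}$, coordinatewise inversion of $h_{3}$ and $h_{4}$ via Lemma \ref{functionhlemma}, Frobenius control of $w$ and $E-I$ by summing the coordinatewise Lipschitz bounds, and then Lemma \ref{Fconcave} plus pushforward by the diagonal map $G$. The differences are small refinements. You read the radius $0.06c_{1}$ of the image of $h_{4}$ directly from property (4) instead of introducing the auxiliary constant $c_{2}$. You also treat $\alpha\in\{0,1\}$ separately, which is worth doing since Lemma \ref{Fconcave} is stated only for $a,b>0$.
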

\begin{proof}
Let the functions $h_{1},h_{2},h_{3},h_{4}$ and the constant $0<c_{1}<\frac{1}{2}$ be as in Lemma \ref{functionhlemma}. Note that $h_{3}(0)=0$.

{\bf Step 1:} Define $c_{3}>0$.

Let $c_{2}>0$ be a small enough constant such that $[-c_{2},c_{2}]$ is contained in $h_{3}([-c_{1},c_{1}])$ and that $[h_{4}(1)-c_{2},h_{4}(1)+c_{2}]$ is contained in $h_{4}([1-c_{1},1+c_{1}])$. Take
\[c_{3}:=\min\left(\frac{c_{2}}{3},0.005\right).\]

The next two steps, Step 2 and Step 3, are essential for showing that the measures $\nu_{1},\nu_{2}$ constructed in Step 4 satisfy their desired properties. 

{\bf Step 2:} Show that there exist $r_{1},\ldots,r_{n}\in[-c_{1},c_{1}]$ and $\lambda_{1},\ldots,\lambda_{n}\in[1-c_{1},1+c_{1}]$ such that
\begin{equation}\label{firstmainp1diagonalproofeq1}
h_{3}(r_{i})=\langle z,e_{i}\rangle\quad\text{and}\quad h_{4}(\lambda_{i})=(h_{2}(r_{i})-h_{1}(r_{i})^{2})D_{i,i},
\end{equation}
for all $1\leq i\leq n$, where $D_{i,i}$ is the $(i,i)$-entry of the diagonal matrix $D$.

To prove this, we first construct $r_{1},\ldots,r_{n}$ as follows. Observe that since $\|z\|\leq c_{3}$, we have $|\langle z,e_{i}\rangle|\leq c_{3}\leq c_{2}$ and so by the definition of $c_{2}$ in Step 1, we have $\langle z,e_{i}\rangle\in h_{3}([-c_{1},c_{1}])$ for all $1\leq i\leq n$. Hence there exist $r_{1},\ldots,r_{n}\in[-c_{1},c_{1}]$ such that $h_{3}(r_{i})=\langle z,e_{i}\rangle$ for all $1\leq i\leq n$.

Next, we construct $\lambda_{1},\ldots,\lambda_{n}$ as follows. Observe that $h_{4}(1)=h_{2}(0)$ and so for all $1\leq i\leq n$, we have
\begin{align}\label{firstmainp1diagonalproofeq2}
&|(h_{2}(r_{i})-h_{1}(r_{i})^{2})D_{i,i}-h_{4}(1)|\\=&
|(h_{2}(r_{i})-h_{1}(r_{i})^{2})D_{i,i}-h_{2}(0)|\nonumber\\\leq&
|h_{2}(r_{i})D_{i,i}-h_{2}(0)|+|h_{1}(r_{i})^{2}D_{i,i}|\nonumber\\\leq&
|h_{2}(r_{i})|\cdot|D_{i,i}-1|+|h_{2}(r_{i})-h_{2}(0)|+|h_{1}(r_{i})^{2}D_{i,i}|\nonumber\\\leq&
|D_{i,i}-1|+|r_{i}|+|r_{i}|^{2}|D_{i,i}|\nonumber\\\leq&
|D_{i,i}-1|+2|r_{i}|\nonumber\\\leq&
|D_{i,i}-1|+2|h_{3}(r_{i})|\nonumber\\=&
|D_{i,i}-1|+2|\langle z,e_{i}\rangle|\leq 3c_{3}\leq c_{2},\nonumber
\end{align}
where the fourth step follows from the fact that $|h_{2}(r)|\leq 1$ for all $r\in\mathbb{R}$, Lemma \ref{functionhlemma}(1) and Lemma \ref{functionhlemma}(2), the fifth step follows from the fact that $|r_{i}|\leq c_{1}<\frac{1}{2}$ and the assumption $\|D-I\|_{\mathrm{F}}\leq c_{3}\leq 1$, the sixth step follows from Lemma \ref{functionhlemma}(3), the seventh step follows from the construction of $r_{i}$, and the eighth step follows from the assumptions $\|D-I\|_{\mathrm{F}}\leq c_{3}$ and $\|z\|\leq c_{3}$. Thus, by the definition of $c_{2}$ in Step 1, we have $(h_{2}(r_{i})-h_{1}(r_{i})^{2})D_{i,i}\in h_{4}([1-c_{1},1+c_{1}])$. So there exist $\lambda_{1},\ldots,\lambda_{n}\in[1-c_{1},1+c_{1}]$ such that $h_{4}(\lambda_{i})=(h_{2}(r_{i})-h_{1}(r_{i})^{2})D_{i,i}$ for all $1\leq i\leq n$. This completes Step 2.

{\bf Step 3:} Show that
\begin{equation}\label{firstmainp1diagonalproofeq3}
\left(\sum_{i=1}^{n}|r_{i}|^{2}\right)^{1/2}\leq\frac{1}{4}\quad\text{and}\quad\left(\sum_{i=1}^{n}|\lambda_{i}-1|^{2}\right)^{1/2}\leq\frac{1}{4}.
\end{equation}
To prove this, observe that by Lemma \ref{functionhlemma}(3) and (\ref{firstmainp1diagonalproofeq1}), we have
\begin{equation}\label{firstmainp1diagonalproofeq4}
\left(\sum_{i=1}^{n}|r_{i}|^{2}\right)^{1/2}\leq\left(\sum_{i=1}^{n}|h_{3}(r_{i})|^{2}\right)^{1/2}=\left(\sum_{i=1}^{n}|\langle z,e_{i}\rangle|^{2}\right)^{1/2}=\|z\|\leq c_{3}.
\end{equation}
This proves the first inequality in (\ref{firstmainp1diagonalproofeq3}). We now prove the other inequality in (\ref{firstmainp1diagonalproofeq3}). Recall from (\ref{firstmainp1diagonalproofeq2}) that
\begin{equation}\label{firstmainp1diagonalproofeq5}
|(h_{2}(r_{i})-h_{1}(r_{i})^{2})D_{i,i}-h_{4}(1)|\leq|D_{i,i}-1|+2|r_{i}|.
\end{equation}
We have
\begin{eqnarray*}
\left(\sum_{i=1}^{n}|\lambda_{i}-1|^{2}\right)^{1/2}&\leq&
\frac{1}{0.06}\left(\sum_{i=1}^{n}|h_{4}(\lambda_{i})-h_{4}(1)|^{2}\right)^{1/2}\\&=&
\frac{1}{0.06}\left(\sum_{i=1}^{n}|(h_{2}(r_{i})-h_{1}(r_{i})^{2})D_{i,i}-h_{4}(1)|^{2}\right)^{1/2}\\&\leq&
\frac{1}{0.06}\left(\sum_{i=1}^{n}(|D_{i,i}-1|+2|r_{i}|)^{2}\right)^{1/2}\\&\leq&
\frac{1}{0.06}\left(\sum_{i=1}^{n}|D_{i,i}-1|^{2}\right)^{1/2}+\frac{2}{0.06}\left(\sum_{i=1}^{n}|r_{i}|^{2}\right)^{1/2}\\&\leq&
\frac{3c_{3}}{0.06}\leq\frac{1}{4},
\end{eqnarray*}
where the first step follows from Lemma \ref{functionhlemma}(4), the second step follows from (\ref{firstmainp1diagonalproofeq1}), the third step follows from (\ref{firstmainp1diagonalproofeq5}), and the fifth step follows from the assumption that $\|D-I\|_{\mathrm{F}}\leq c_{3}$ and (\ref{firstmainp1diagonalproofeq4}). This completes Step 3.

{\bf Step 4:} Show that there exist probability measures $\nu_{1}$ and $\nu_{2}$ on $\mathbb{R}^{n}$ such that
\begin{enumerate}[(1)]
\item $\nu_{1}$ has mean $(h_{1}(r_{i}))_{1\leq i\leq n}$ and covariance $\mathrm{diag}((h_{2}(r_{i})-h_{1}(r_{i})^{2})_{1\leq i\leq n})$;
\item $\nu_{2}$ has mean $0$ and covariance $\mathrm{diag}((h_{4}(\lambda_{i}))_{1\leq i\leq n})$;
\item $\alpha\nu_{1}+(1-\alpha)\nu_{2}$ is log-concave for all $\alpha\in[0,1]$.
\end{enumerate}
To prove this, let $E=\mathrm{diag}(\lambda_{1},\ldots,\lambda_{n})$. Take $\nu_{1}$ and $\nu_{2}$ to be the probability measures supported on $[-1,1]^{n}$ with densities given by
\[f_{\nu_{1}}(x):=\frac{\exp(-\|x-(r_{1},\ldots,r_{n})\|^{2})}{\int_{[-1,1]^{n}}\exp(-\|y-(r_{1},\ldots,r_{n})\|^{2})\,dy}=\prod_{i=1}^{n}\frac{\exp(-(\langle x,e_{i}\rangle-r_{i})^{2})}{\int_{-1}^{1}\exp(-|t-r_{i}|^{2})\,dt},\]
and
\[f_{\nu_{2}}(x):=\frac{e^{-\langle Ex,x\rangle}}{\int_{[-1,1]^{d}}e^{-\langle Ey,y\rangle}\,dy}=\prod_{i=1}^{n}\frac{\exp(-\lambda_{i}\langle x,e_{i}\rangle^{2})}{\int_{-1}^{1}\exp(-\lambda_{i}t^{2})\,dt},\]
for $x\in[-1,1]^{n}$, respectively. By Step 3, we have $\|(r_{1},\ldots,r_{n})\|\leq\frac{1}{4}$ and $\|E-I\|_{\mathrm{F}}\leq\frac{1}{4}$. So by Lemma \ref{Fconcave}, the mixture $\alpha\nu_{1}+(1-\alpha)\nu_{2}$ is log-concave for all $\alpha\in[0,1]$.

Since $\nu_{1}$ is a product measure, if $(X_{1},\ldots,X_{d})$ is a random vector in $\mathbb{R}^{d}$ distributed according to $\nu_{1}$, then $X_{1},\ldots,X_{d}$ are independent random variables,
\[\mathbb{E}X_{i}=\frac{\int_{-1}^{1}te^{-(t-r_{i})^{2}}\,dt}{\int_{-1}^{1}e^{-(t-r_{i})^{2}}\,dt}=h_{1}(r_{i})\quad\text{and}\quad\mathbb{E}(X_{i}^{2})=\frac{\int_{-1}^{1}t^{2}e^{-(t-r_{i})^{2}}\,dt}{\int_{-1}^{1}e^{-(t-r_{i})^{2}}\,dt}=h_{2}(r_{i}).\]
Hence, $\nu_{1}$ has mean $(h_{1}(r_{i}))_{1\leq i\leq n}$ and covariance $\mathrm{diag}((h_{2}(r_{i})-h_{1}(r_{i})^{2})_{1\leq i\leq n})$. Similarly, we can show that $\nu_{2}$ has mean $0$ and covariance $\mathrm{diag}((h_{4}(\lambda_{i}))_{1\leq i\leq n})$.

{\bf Step 5:} Completing the proof.

Let $G=\mathrm{diag}\left((1/\sqrt{h_{2}(r_{i})-h_{1}(r_{i})^{2}}\,)_{1\leq i\leq n}\right)\in\mathbb{R}^{n\times n}$, where we note that the term inside the square root in the denominator is positive, i.e., $h_{2}(r_{i})-h_{1}(r_{i})^{2}>0$ by Lemma \ref{functionhlemma}(2). Take $\mu_{1}=G_{\#}\nu_{1}$ and $\mu_{2}=G_{\#}\nu_{2}$ to be the pushforward measures by $G$ of the measures $\nu_{1}$ and $\nu_{2}$ introduced in Step 4. By Step 4, we have
\begin{enumerate}[(1)]
\item The mean of $\mu_{1}$ is equal to
\[G\cdot\left[(h_{1}(r_{i}))_{1\leq i\leq n}\right]=(h_{1}(r_{i})/\sqrt{h_{2}(r_{i})-h_{1}(r_{i})^{2}})_{1\leq i\leq n}=(h_{3}(r_{i}))_{1\leq i\leq n}=z,\]
where the $\cdot$ denotes the usual matrix-vector multiplication, and the last equality follows from (\ref{firstmainp1diagonalproofeq1}). The covariance of $\mu_{1}$ is equal to
\[G\cdot\mathrm{diag}((h_{2}(r_{i})-h_{1}(r_{i})^{2})_{1\leq i\leq n})\cdot G=I,\]
where the $\cdot$ denotes the usual matrix multiplication.
\item The mean of $\mu_{2}$ is $0$ and the covariance of $\mu_{2}$ is equal to
\[G\cdot\mathrm{diag}((h_{4}(\lambda_{i}))_{1\leq i\leq n})\cdot G=\mathrm{diag}\left(\left(\frac{h_{4}(\lambda_{i})}{h_{2}(r_{i})-h_{1}(r_{i})^{2}}\right)_{1\leq i\leq n}\right)=D,\]
where the $\cdot$ denotes the usual matrix multiplication, and the last equality follows from (\ref{firstmainp1diagonalproofeq1}).
\item $\alpha\mu_{1}+(1-\alpha)\mu_{2}=G_{\#}(\alpha\nu_{1}+(1-\alpha)\nu_{2})$ is log-concave for all $\alpha\in[0,1]$.
\end{enumerate}
\end{proof}
\subsection{Completing the proof of the first main result part (1)}\label{firstmainp1proofsubsection3}
\begin{lemma}\label{changeofbasislemma}
Suppose that $\Sigma_{1},\Sigma_{2}\in\mathbb{R}^{n\times n}$ are positive semidefinite matrices such that $\Sigma_{1}+\Sigma_{2}$ is invertible and
\[\|(\Sigma_{1}+\Sigma_{2})^{-1/2}(\Sigma_{1}-\Sigma_{2})(\Sigma_{1}+\Sigma_{2})^{-1/2}\|_{\mathrm{F}}\leq\frac{c_{3}}{4}.\]
Then there exist an invertible matrix $S\in\mathbb{R}^{n\times n}$ and a diagonal matrix $D\in\mathbb{R}^{n\times n}$ such that $\Sigma_{1}=SS^{T}$, $\Sigma_{2}=SDS^{T}$ and $\|D-I\|_{\mathrm{F}}\leq c_{3}$. Moreover, $\|\Sigma_{1}^{-1/2}(\Sigma_{1}+\Sigma_{2})^{1/2}\|_{\mathrm{op}}\leq 2$.
\end{lemma}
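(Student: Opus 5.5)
Let $M=\Sigma_{1}+\Sigma_{2}$, which is invertible and positive definite, and set
\[K:=M^{-1/2}(\Sigma_{1}-\Sigma_{2})M^{-1/2},\]
a symmetric matrix with $\|K\|_{\mathrm{F}}\leq c_{3}/4$. The natural approach is to simultaneously diagonalize $\Sigma_{1}$ and $\Sigma_{2}$ through $M$. Since $M^{-1/2}\Sigma_{1}M^{-1/2}+M^{-1/2}\Sigma_{2}M^{-1/2}=I$, we have
\[M^{-1/2}\Sigma_{1}M^{-1/2}=\tfrac12(I+K),\qquad M^{-1/2}\Sigma_{2}M^{-1/2}=\tfrac12(I-K).\]
Diagonalize $K=U\Lambda U^{T}$ with $U$ orthogonal and $\Lambda=\mathrm{diag}(\kappa_{1},\ldots,\kappa_{n})$. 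Then $\sum\kappa_{i}^{2}\leq (c_{3}/4)^{2}$, so each $|\kappa_{i}|\leq c_{3}/4<1/4$. In particular $I+\Lambda$ is positive definite.

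Define
\[S:=M^{1/2}U\left(\tfrac12(I+\Lambda)\right)^{1/2},\qquad D:=(I+\Lambda)^{-1}(I-\Lambda)=\mathrm{diag}\left(\frac{1-\kappa_{i}}{1+\kappa_{i}}\right).\]
Then $S$ is invertible and
\[SS^{T}=M^{1/2}U\tfrac12(I+\Lambda)U^{T}M^{1/2}=M^{1/2}\tfrac12(I+K)M^{1/2}=\Sigma_{1}.\]
Because diagonal matrices commute, $SDS^{T}=M^{1/2}U\tfrac12(I-\Lambda)U^{T}M^{1/2}=\Sigma_{2}$. For the Frobenius bound, $D_{i,i}-1=-2\kappa_{i}/(1+\kappa_{i})$ and $1+\kappa_{i}\geq 3/4$, so $|D_{i,i}-1|\leq \tfrac{8}{3}|\kappa_{i}|$. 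Hence
\[\|D-I\|_{\mathrm{F}}\leq \tfrac{8}{3}\cdot\tfrac{c_{3}}{4}<c_{3}.\]

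For the last claim, let $T=\Sigma_{1}^{-1/2}M^{1/2}$, so that $\|T\|_{\mathrm{op}}^{2}=\|T^{T}T\|_{\mathrm{op}}=\|M^{1/2}\Sigma_{1}^{-1}M^{1/2}\|_{\mathrm{op}}$. First, $\Sigma_{1}$ is invertible: $M^{-1/2}\Sigma_{1}M^{-1/2}=\tfrac12(I+K)\succeq \tfrac38 I$. Second, $M^{1/2}\Sigma_{1}^{-1}M^{1/2}=(M^{-1/2}\Sigma_{1}M^{-1/2})^{-1}=2(I+K)^{-1}$, whose operator norm is at most $2/(3/4)=8/3$. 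Therefore $\|T\|_{\mathrm{op}}\leq\sqrt{8/3}<2$.

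The only point needing care is this last step, namely the identity $(M^{-1/2}\Sigma_{1}M^{-1/2})^{-1}=M^{1/2}\Sigma_{1}^{-1}M^{1/2}$ together with expressing $\|T\|_{\mathrm{op}}^{2}$ as the norm of $T^{T}T$. Once those are written out, everything else is direct computation.
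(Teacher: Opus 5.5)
Your proof is correct. It reaches the same decomposition as the paper, but it whitens with respect to $\Sigma_{1}+\Sigma_{2}$ instead of $\Sigma_{1}$.

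The paper's route runs as follows. It first uses $\|K\|_{\mathrm{op}}\leq\frac{1}{2}$ to get the quadratic-form inequality $\Sigma_{1}+\Sigma_{2}\preceq 4\Sigma_{1}$. This gives invertibility of $\Sigma_{1}$ and the bound $\|T\|_{\mathrm{op}}\leq 2$ for $T=\Sigma_{1}^{-1/2}(\Sigma_{1}+\Sigma_{2})^{1/2}$. It then writes $\Sigma_{1}^{-1/2}\Sigma_{2}\Sigma_{1}^{-1/2}-I=-TKT^{T}$ and bounds its Frobenius norm by $\|T\|_{\mathrm{op}}^{2}\|K\|_{\mathrm{F}}\leq c_{3}$. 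Only at the end does it diagonalize $\Sigma_{1}^{-1/2}\Sigma_{2}\Sigma_{1}^{-1/2}=UDU^{T}$ and set $S=\Sigma_{1}^{1/2}U$.

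In your version, both $M^{-1/2}\Sigma_{j}M^{-1/2}=\frac{1}{2}(I\pm K)$ are diagonal in the eigenbasis of $K$. So $S$ and $D$ are written down directly, with $D_{i,i}=\frac{1-\kappa_{i}}{1+\kappa_{i}}$. Both the Frobenius bound and the operator-norm bound are then read off from the eigenvalues of $K$, rather than obtained through submultiplicativity.

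What this buys you:
\begin{enumerate}[(1)]
\item Slightly sharper constants: $\frac{2c_{3}}{3}$ and $\sqrt{8/3}$ in place of $c_{3}$ and $2$.
\item An explicit relation between the spectra: the spectrum of $D$ is the image of the spectrum of $K$ under $\kappa\mapsto\frac{1-\kappa}{1+\kappa}$. This is also the spectrum of the paper's $D$, so the two constructions agree up to ordering.
\end{enumerate}

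The paper's route never diagonalizes $K$. There the operator-norm estimate is the tool that transfers the Frobenius bound from $K$ to $D$. In your argument it is a separate, independent computation needed only for the ``moreover'' clause.
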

\begin{proof}
{\bf Step 1:} Show that $\Sigma_{1}$ is invertible.

By assumption,
\[\left|\langle(\Sigma_{1}+\Sigma_{2})^{-1/2}(\Sigma_{1}-\Sigma_{2})(\Sigma_{1}+\Sigma_{2})^{-1/2}y,y\rangle\right|\leq\frac{1}{2}\|y\|^{2},\]
for all $y\in\mathbb{R}^{n}$, and so taking $y=(\Sigma_{1}+\Sigma_{2})^{1/2}x$ where $x\in\mathbb{R}^{n}$, we have
\begin{equation}\label{changeofbasislemmaproofeq1}
\left|\langle(\Sigma_{1}-\Sigma_{2})x,x\rangle\right|\leq\frac{1}{2}\|(\Sigma_{1}+\Sigma_{2})^{1/2}x\|^{2}=\frac{1}{2}\langle(\Sigma_{1}+\Sigma_{2})x,x\rangle,
\end{equation}
for all $x\in\mathbb{R}^{n}$. Thus, if $\Sigma_{1}x=0$, we have $\left|\langle\Sigma_{2}x,x\rangle\right|\leq\frac{1}{2}\langle\Sigma_{2}x,x\rangle$. Since $\Sigma_{2}$ is positive semidefinite, this implies that $\langle\Sigma_{2}x,x\rangle=0$, or equivalently, $\Sigma_{2}x=0$, and so $(\Sigma_{1}+\Sigma_{2})x=0$, but since $\Sigma_{1}+\Sigma_{2}$ is invertible, it follows that $x=0$. Therefore, $\Sigma_{1}$ is invertible.

{\bf Step 2:} Show that $\|\Sigma_{1}^{-1/2}(\Sigma_{1}+\Sigma_{2})^{1/2}\|_{\mathrm{op}}\leq 2$.

By (\ref{changeofbasislemmaproofeq1}), for every $x\in\mathbb{R}^{n}$, we have
\[\langle(\Sigma_{1}+\Sigma_{2})x,x\rangle=2\langle\Sigma_{1}x,x\rangle-\langle(\Sigma_{1}-\Sigma_{2})x,x\rangle\leq2\langle\Sigma_{1}x,x\rangle+\frac{1}{2}\langle(\Sigma_{1}+\Sigma_{2})x,x\rangle,\]
and so rearranging the terms, we obtain $\langle(\Sigma_{1}+\Sigma_{2})x,x\rangle\leq4\langle\Sigma_{1}x,x\rangle$. Since this holds for all $x\in\mathbb{R}^{n}$, replacing $x$ by $\Sigma_{1}^{-1/2}x$, we get
\[\langle\Sigma_{1}^{-1/2}(\Sigma_{1}+\Sigma_{2})\Sigma_{1}^{-1/2}x,x\rangle\leq4\|x\|^{2},\]
for all $x\in\mathbb{R}^{n}$, and so the spectral norm $\|\Sigma_{1}^{-1/2}(\Sigma_{1}+\Sigma_{2})\Sigma_{1}^{-1/2}\|_{\mathrm{op}}\leq 4$, or equivalently, $\|\Sigma_{1}^{-1/2}(\Sigma_{1}+\Sigma_{2})^{1/2}\|_{\mathrm{op}}\leq 2$.

{\bf Step 3:} Show that $\|\Sigma_{1}^{-1/2}\Sigma_{2}\Sigma_{1}^{-1/2}-I\|_{\mathrm{F}}\leq c_{3}$.
\begin{align*}
&\|\Sigma_{1}^{-1/2}\Sigma_{2}\Sigma_{1}^{-1/2}-I\|_{\mathrm{F}}\\=&
\|\Sigma_{1}^{-1/2}(\Sigma_{2}-\Sigma_{1})\Sigma_{1}^{-1/2}\|_{\mathrm{F}}\\\leq&
\|(\Sigma_{1}+\Sigma_{2})^{-1/2}(\Sigma_{2}-\Sigma_{1})(\Sigma_{1}+\Sigma_{2})^{-1/2}\|_{\mathrm{F}}\cdot\|\Sigma_{1}^{-1/2}(\Sigma_{1}+\Sigma_{2})^{1/2}\|_{\mathrm{op}}^{2}\\\leq&
\frac{c_{3}}{4}\cdot 2^{2}=c_{3},
\end{align*}
where the second last step follows from the assumption and Step 2.

{\bf Step 4:} Completing the proof.

Using the spectral decomposition, we can write $\Sigma_{1}^{-1/2}\Sigma_{2}\Sigma_{1}^{-1/2}=UDU^{T}$ for some orthogonal matrix $U\in\mathbb{R}^{n\times n}$ and diagonal matrix $D\in\mathbb{R}^{n\times n}$. Take $S=\Sigma_{1}^{1/2}U$. Since $\Sigma_{1}$ and $U$ are invertible, $S$ is also invertible. We also have $SS^{T}=\Sigma_{1}$,
\[SDS^{T}=\Sigma_{1}^{1/2}UDU^{T}\Sigma_{1}^{1/2}=\Sigma_{1}^{1/2}(\Sigma_{1}^{-1/2}\Sigma_{2}\Sigma_{1}^{-1/2})\Sigma_{1}^{1/2}=\Sigma_{2},\]
and
\[\|D-I\|_{\mathrm{F}}=\|U(D-I)U^{T}\|_{\mathrm{F}}=\|UDU^{T}-I\|_{\mathrm{F}}=\|\Sigma_{1}^{-1/2}\Sigma_{2}\Sigma_{1}^{-1/2}-I\|_{\mathrm{F}}\leq c_{3},\]
where the last step follows from Step 3.
\end{proof}
\begin{proof}[Proof of Theorem \ref{firstmain} Statement (1)]
We first prove it assuming that $\Sigma_{1}+\Sigma_{2}$ is invertible and $z_{2}=0$.

Take $c=c_{3}^{2}/4$ where $c_{3}$ is in Lemma \ref{firstmainp1diagonal}. Since $c\leq\frac{c_{3}}{4}$, by Lemma \ref{changeofbasislemma}, there exist an invertible matrix $S\in\mathbb{R}^{n\times n}$ and a diagonal matrix $D\in\mathbb{R}^{n\times n}$ such that $\Sigma_{1}=SS^{T}$, $\Sigma_{2}=SDS^{T}$ and $\|D-I\|_{\mathrm{F}}\leq c_{3}$.

Lemma \ref{changeofbasislemma} also gives that $\|\Sigma_{1}^{-1/2}(\Sigma_{1}+\Sigma_{2})^{1/2}\|_{\mathrm{op}}\leq 2$. Moreover, the assumption gives $\|(\Sigma_{1}+\Sigma_{2})^{-1/2}z_{1}\|^{2}=\|(\Sigma_{1}+\Sigma_{2})^{-1/2}(z_{1}-z_{2})\|^{2}\leq c$. Hence,
\[\|\Sigma_{1}^{-1/2}z_{1}\|^{2}\leq\|\Sigma_{1}^{-1/2}(\Sigma_{1}+\Sigma_{2})^{1/2}\|_{\mathrm{op}}^{2}\cdot\|(\Sigma_{1}+\Sigma_{2})^{-1/2}z_{1}\|^{2}\leq 2^{2}c=c_{3}^{2}.\]
So letting $z=S^{-1}z_{1}$, we have
\[\|z\|^{2}=\langle z,z\rangle=
\langle(SS^{T})^{-1}z_{1},z_{1}\rangle=
\langle\Sigma_{1}^{-1}z_{1},z_{1}\rangle=
\|\Sigma_{1}^{-1/2}z_{1}\|^{2}\leq c_{3}^{2},\]
so $\|z\|\leq c_{3}$. Since $\|z\|\leq c_{3}$ and $\|D-I\|_{\mathrm{F}}\leq c_{3}$, we can apply Lemma \ref{firstmainp1diagonal} to obtain probability measures $\mu_{1}$ and $\mu_{2}$ on $\mathbb{R}^{n}$ such that
\begin{enumerate}[(1)]
\item $\mu_{1}$ has mean $z$ and covariance $I$;
\item $\mu_{2}$ has mean $0$ and covariance $D$;
\item $\alpha\mu_{1}+(1-\alpha)\mu_{2}$ is log-concave for all $\alpha\in[0,1]$.
\end{enumerate}
Hence
\begin{enumerate}[(1)]
\item $S_{\#}\mu_{1}$ has mean $Sz=z_{1}$ and covariance $SS^{T}=\Sigma_{1}$;
\item $S_{\#}\mu_{2}$ has mean $0$ and covariance $SDS^{T}=\Sigma_{2}$;
\item $\alpha S_{\#}\mu_{1}+(1-\alpha)S_{\#}\mu_{2}=S_{\#}(\alpha\mu_{1}+(1-\alpha)\mu_{2})$ is log-concave for all $\alpha\in[0,1]$.
\end{enumerate}
This completes the proof of Statement (1) in Theorem \ref{firstmain} when $\Sigma_{1}+\Sigma_{2}$ is invertible and $z_{2}=0$.

Next we prove Statement (1) in Theorem \ref{firstmain} when $z_{2}=0$ but $\Sigma_{1}+\Sigma_{2}$ is not necessarily invertible. Since $z_{1}=z_{1}-z_{2}\in\mathrm{ran}(\Sigma_{1}+\Sigma_{2})$ and $\Sigma_{1}+\Sigma_{2}$ is an invertible operator on $\mathrm{ran}(\Sigma_{1}+\Sigma_{2})$, we can restrict the ambient space $\mathbb{R}^{n}$ to $\mathrm{ran}(\Sigma_{1}+\Sigma_{2})$ and using what we have just shown above, this completes the proof when $z_{2}=0$.

Finally, if $z_{2}\neq 0$, then we can apply Statement (1) in Theorem \ref{firstmain} with the same $\Sigma_{1},\Sigma_{2}$ but with $z_{1},z_{2}$ being replaced by $z_{1}-z_{2},0$, respectively, and then translate the resulting probability measures by $z_{2}$.
\end{proof}
\section{Non-isotropic thin shell}\label{nsthinshellsection}
In this section, we derive a concentration bound for the random variable $\langle AX,X\rangle$, where $A\in\mathbb{R}^{n\times n}$ and $X$ is a log-concave random vector in $\mathbb{R}^{n}$, using the recent result \cite{kls} on the KLS conjecture and the reverse H\"older inequality \cite{nsv}. This is the key ingredient used in the proof of Statement (2) in Theorem \ref{firstmain} and the proof of Theorem \ref{secondmain}.

A random vector $X$ in $\mathbb{R}^{n}$ is {\it isotropic} if $\mathbb{E}X=0$ and $\mathbb{E}(XX^{T})=I$. A random vector $X$ in $\mathbb{R}^{n}$ is {\it log-concave} if its distribution is log-concave.
\begin{lemma}\label{klsboundaxx}
If $X$ is an isotropic log-concave random vector in $\mathbb{R}^{n}$, then
\[\sqrt{\mathrm{Var}(\langle AX,X\rangle)}\leq C\sqrt{\ln n}\cdot\|A\|_{\mathrm{F}},\]
for all $A\in\mathbb{R}^{n\times n}$, where $C>0$ is an absolute constant.
\end{lemma}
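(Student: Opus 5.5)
We will bound the variance through the Poincaré inequality for isotropic log-concave measures, using the KLS bound from \cite{kls}. That result gives: if $X$ is isotropic log-concave in $\mathbb{R}^{n}$, then for every locally Lipschitz $F$,
\[\mathrm{Var}(F(X))\leq C_{0}\ln n\cdot\mathbb{E}\|\nabla F(X)\|^{2}.\]

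First we symmetrize. Since $\langle AX,X\rangle=\langle\frac{A+A^{T}}{2}X,X\rangle$ and $\|\frac{A+A^{T}}{2}\|_{\mathrm{F}}\leq\|A\|_{\mathrm{F}}$, we may assume $A$ is symmetric. Then we apply the Poincaré inequality to $F(x)=\langle Ax,x\rangle$, whose gradient is $\nabla F(x)=2Ax$. This gives
\[\mathrm{Var}(\langle AX,X\rangle)\leq C_{0}\ln n\cdot4\,\mathbb{E}\|AX\|^{2}.\]

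Isotropy computes the right-hand side exactly:
\[\mathbb{E}\|AX\|^{2}=\mathbb{E}\langle A^{T}AX,X\rangle=\mathrm{tr}(A^{T}A\,\mathbb{E}XX^{T})=\mathrm{tr}(A^{T}A)=\|A\|_{\mathrm{F}}^{2}.\]
Hence $\sqrt{\mathrm{Var}(\langle AX,X\rangle)}\leq2\sqrt{C_{0}}\sqrt{\ln n}\,\|A\|_{\mathrm{F}}$, which is the claim with $C=2\sqrt{C_{0}}$.

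The only nontrivial input is the Poincaré constant $O(\ln n)$ from \cite{kls}; everything else is routine. Two technical points need checking. First, $F$ is only locally Lipschitz, with quadratic growth. The Poincaré inequality still applies to such $F$ because log-concave measures have all moments finite, so one can extend from Lipschitz functions by truncation and approximation. Second, one must confirm that the form quoted from \cite{kls} is the Poincaré inequality itself, not just the Cheeger constant. If only a Cheeger bound $\psi_{n}\gtrsim(\ln n)^{-1/2}$ is available, the Poincaré constant $O(\psi_{n}^{-2})=O(\ln n)$ follows from Cheeger's inequality.

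The reverse Hölder inequality \cite{nsv} is not needed for this variance bound. It would only be needed to upgrade it to tail or higher-moment bounds in the subsequent lemma.
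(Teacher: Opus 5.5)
Your proof is correct and follows the paper's argument: apply the $O(\ln n)$ Poincar\'e inequality for isotropic log-concave vectors to $f(x)=\langle Ax,x\rangle$, then use isotropy to get $\mathbb{E}\|AX\|^{2}=\|A\|_{\mathrm{F}}^{2}$. The only cosmetic difference is that you symmetrize $A$ first, whereas the paper keeps the gradient $Ax+A^{T}x$ and uses $\mathbb{E}\|AX\|^{2}=\mathbb{E}\|A^{T}X\|^{2}=\|A\|_{\mathrm{F}}^{2}$.
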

\begin{proof}
By \cite[Theorem 1.2 and (1.4)]{kls}, we have
\[\mathrm{Var}(f(X))\leq C(\ln n)\mathbb{E}\|\nabla f(X)\|^{2},\]
for all locally-Lipschitz function $f:\mathbb{R}^{n}\to\mathbb{R}$, where $C>0$ is an absolute constant. Take $f(x)=\langle Ax,x\rangle$ for $x\in\mathbb{R}^{n}$. We have $\nabla f(x)=Ax+A^{T}x$ and so
\[\mathrm{Var}(\langle AX,X\rangle)\leq C(\ln n)\mathbb{E}\|AX+A^{T}X\|^{2}.\]
Since $X$ is isotropic, we have $\mathbb{E}\|AX\|^{2}=\|A\|_{\mathrm{F}}^{2}=\mathbb{E}\|A^{T}X\|^{2}$. The result follows.
\end{proof}
\begin{remark}
It is not known if Lemma \ref{klsboundaxx} holds without the $\sqrt{\ln n}$ factor. When $A=I$, the $\sqrt{\ln n}$ factor can be removed, and this is known as the thin shell conjecture, which has recently been solved \cite{thinshell}.
\end{remark}
\begin{lemma}\label{nsthinshell}
Suppose that $X$ is a log-concave random vector in $\mathbb{R}^{n}$ with $\mathbb{E}X=0$. Let $\nu$ be the distribution of $X$. Then
\begin{equation}\label{nsthinshelleq1}
\sqrt{\mathrm{Var}(\langle AX,X\rangle)}\leq C\sqrt{\ln n}\cdot\|B_{\nu}\|_{\mathrm{op}}\|A\|_{\mathrm{F}},
\end{equation}
and
\begin{equation}\label{nsthinshelleq2}
\nu(\{x\in\mathbb{R}^{n}:\,\left|\langle Ax,x\rangle-\mathrm{Tr}(AB_{\nu})\right|\geq t\sqrt{\ln n}\cdot\|B_{\nu}\|_{\mathrm{op}}\|A\|_{\mathrm{F}}\})\leq Ce^{-c\sqrt{t}},
\end{equation}
for all $t\geq 0$ and $A\in\mathbb{R}^{n\times n}$, where $C,c>0$ are absolute constants.
\end{lemma}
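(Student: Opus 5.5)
The plan is to reduce to the isotropic case via Lemma \ref{klsboundaxx}, and then upgrade the variance bound to the tail bound \eqref{nsthinshelleq2} using the reverse H\"older inequality for polynomials of log-concave vectors \cite{nsv}.

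\textbf{Step 1: reduction to isotropic position.} Since $\mathbb{E}X=0$, we have $\mathrm{Cov}(\nu)=B_{\nu}$. If $B_{\nu}$ is singular, then $X$ is supported on $\mathrm{ran}(B_{\nu})$, and we restrict to that subspace. On that subspace $X$ is still log-concave. Lemma \ref{klsboundaxx} then applies in dimension $k\leq n$, and $\sqrt{\ln k}\leq\sqrt{\ln n}$; the case $k=1$ is trivial for variances and can be absorbed into the constants. So we may assume $B_{\nu}$ is invertible.

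Write $X=B_{\nu}^{1/2}Y$. Then $Y$ is isotropic and log-concave, since log-concavity is preserved under linear maps. We also have
\[
\langle AX,X\rangle=\langle B_{\nu}^{1/2}AB_{\nu}^{1/2}Y,Y\rangle .
\]
Applying Lemma \ref{klsboundaxx} to $A'=B_{\nu}^{1/2}AB_{\nu}^{1/2}$ gives
\[
\sqrt{\mathrm{Var}\langle AX,X\rangle}\leq C\sqrt{\ln n}\,\|A'\|_{\mathrm{F}}\leq C\sqrt{\ln n}\,\|B_{\nu}^{1/2}\|_{\mathrm{op}}^{2}\|A\|_{\mathrm{F}}=C\sqrt{\ln n}\,\|B_{\nu}\|_{\mathrm{op}}\|A\|_{\mathrm{F}}.
\]
This is \eqref{nsthinshelleq1}.

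\textbf{Step 2: the tail bound \eqref{nsthinshelleq2}.} First note that $\mathbb{E}\langle AX,X\rangle=\mathrm{Tr}(A\,\mathbb{E}XX^{T})=\mathrm{Tr}(AB_{\nu})$. Hence
\[
P(x)=\langle Ax,x\rangle-\mathrm{Tr}(AB_{\nu})
\]
is a centered polynomial of degree $2$, and by Step 1 its $L^{2}(\nu)$ norm is at most $\sigma:=C\sqrt{\ln n}\,\|B_{\nu}\|_{\mathrm{op}}\|A\|_{\mathrm{F}}$.

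The reverse H\"older inequality of \cite{nsv} for degree-$d$ polynomials under log-concave measures gives $\|P\|_{L^{q}(\nu)}\leq (Cq)^{d}\|P\|_{L^{2}(\nu)}$ for $q\geq 2$. With $d=2$ this is $\|P\|_{q}\leq C'q^{2}\sigma$. Markov's inequality then yields
\[
\nu(|P|\geq t\sigma/C)\leq (C'' q^{2}/t)^{q}.
\]
Choosing $q\asymp\sqrt{t}$ (so that $C''q^{2}/t\leq e^{-1}$) gives a bound $e^{-c\sqrt{t}}$ for large $t$. For small $t$ the bound is trivial once $C\geq e^{c\sqrt{t_0}}$. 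Absorbing the constant in $\sigma$ into a rescaling of $t$ gives \eqref{nsthinshelleq2}, with the threshold $t\sqrt{\ln n}\,\|B_{\nu}\|_{\mathrm{op}}\|A\|_{\mathrm{F}}$.

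\textbf{Main obstacle.} The difficulty is the precise form of the reverse H\"older inequality: its dependence on $q$ must be polynomial of order $q^{d}$, so that with $d=2$ it produces exactly the $e^{-c\sqrt{t}}$ tail. Equivalently, one can use the fact that degree-$d$ polynomials of log-concave vectors belong to the Orlicz class $\psi_{1/d}$ with constant comparable to the $L^{2}$ norm. If only an $L^{1}$ version of the inequality is available, first compare $\|P\|_{q}$ with $\|P\|_{1}\leq\|P\|_{2}$, which suffices.
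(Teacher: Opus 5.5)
Your proposal is correct and follows essentially the same route as the paper. Both restrict to $\mathrm{ran}(B_{\nu})$, apply Lemma \ref{klsboundaxx} to the isotropic vector $B_{\nu}^{-1/2}X$ with the matrix $B_{\nu}^{1/2}AB_{\nu}^{1/2}$, and then upgrade the resulting $L^{2}$ bound on $\langle AX,X\rangle-\mathrm{Tr}(AB_{\nu})$ to the $e^{-c\sqrt{t}}$ tail via the reverse H\"older inequality of \cite{nsv}. Your explicit moment/Markov computation with $q\asymp\sqrt{t}$ and your care about low-dimensional restrictions (where $\ln k$ can vanish) spell out details the paper leaves implicit.
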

\begin{proof}
Without loss of generality, by restricting the ambient space $\mathbb{R}^{n}$ to $\mathrm{ran}(B_{\nu})$, if necessary, we may assume that the support of $\nu$ spans the entire $\mathbb{R}^{n}$ so that $B_{\nu}$ is invertible. The random vector $B_{\nu}^{-1/2}X$ is isotropic and log-concave. So by Lemma \ref{klsboundaxx},
\[\sqrt{\mathrm{Var}(\langle AB_{\nu}^{-1/2}X,B_{\nu}^{-1/2}X\rangle)}\leq C\sqrt{\ln n}\cdot\|A\|_{\mathrm{F}},\]
for all $A\in\mathbb{R}^{n\times n}$. Replacing $A$ by $B_{\nu}^{1/2}AB_{\nu}^{1/2}$, we obtain
\[\sqrt{\mathrm{Var}(\langle AX,X\rangle)}\leq C\sqrt{\ln n}\cdot\|B_{\nu}^{1/2}AB_{\nu}^{1/2}\|_{\mathrm{F}}\leq C\sqrt{\ln n}\cdot\|B_{\nu}\|_{\mathrm{op}}\|A\|_{\mathrm{F}},\]
for all $A\in\mathbb{R}^{n\times n}$. This proves (\ref{nsthinshelleq1}). To prove (\ref{nsthinshelleq2}), observe that $\mathbb{E}\langle AX,X\rangle=\mathrm{Tr}(A(\mathbb{E}XX^{T}))=\mathrm{Tr}(AB_{\nu})$, so we have
\[\sqrt{\mathbb{E}|\langle AX,X\rangle-\mathrm{Tr}(AB_{\nu})|^{2}}\leq C\sqrt{\ln n}\cdot\|B_{\nu}\|_{\mathrm{op}}\|A\|_{\mathrm{F}}.\]
Like in \cite[Corollary 1.3]{thinshell}, this bound on the second moment of $\langle AX,X\rangle-\mathrm{Tr}(AB_{\nu})$ implies (\ref{nsthinshelleq2}), since $\langle AX,X\rangle-\mathrm{Tr}(AB_{\nu})$ is a quadratic polynomial in $X$. This is due to the reverse H\"older inequality \cite[Distribution inequalities]{nsv} (or see \cite[Corollary 10 with $s=0$]{fradelizi}).
\end{proof}
\section{Proof of the first main result part (2)}\label{firstmainp2proofsection}
In this section, we prove Statement (2) in Theorem \ref{firstmain}. This actually consists of two separate parts: bounding $\|(\Sigma_{1}+\Sigma_{2})^{-1/2}(\Sigma_{1}-\Sigma_{2})(\Sigma_{1}+\Sigma_{2})^{-1/2}\|_{\mathrm{F}}$ and bounding $\|(\Sigma_{1}+\Sigma_{2})^{-1/2}(z_{1}-z_{2})\|$. The idea of the proof is as follows.

Since log-concavity is invariant under pushforward by any linear transformation, it suffices to prove the result when $\Sigma_{1}+\Sigma_{2}=I$. Thus, our goal is to bound $\|\Sigma_{1}-\Sigma_{2}\|_{\mathrm{F}}$ and $\|z_{1}-z_{2}\|^{2}$.

To bound $\|z_{1}-z_{2}\|^{2}$, we first obtain a bound in one dimension (see Lemma \ref{lcclosemean1d}), and then we show that the same bound holds for general dimension (see Lemma \ref{lcclosemean}), since we can project everything onto the one-dimensional subspace spanned by $z_{1}-z_{2}$.

To bound $\|\Sigma_{1}-\Sigma_{2}\|_{\mathrm{F}}$, observe that once we project the measures $\mu_{1},\mu_{2}$ onto the orthogonal complement of the span of $z_{1},z_{2}$, the means of those measures become $0$ and so their covariances coincide with their second moments. Thus, we can instead bound the difference between their second moments $\|B_{\mu_{1}}-B_{\mu_{2}}\|_{\mathrm{F}}$. Bounding $\|B_{\mu_{1}}-B_{\mu_{2}}\|_{\mathrm{F}}$ consists of two steps. First, we bound it by $\sup_{A\in\mathbb{R}^{n\times n},\,\|A\|_{\mathrm{F}}\leq 1}\sqrt{\mathrm{Var}(\langle AX,X\rangle)}$, where $X$ is a random vector in $\mathbb{R}^{n}$ distributed according to $\alpha\mu_{1}+(1-\alpha)\mu_{2}$ (see Lemma \ref{bktprop32}). Then we can apply the non-isotropic thin shell bound Lemma \ref{nsthinshell} to bound $\sup_{A\in\mathbb{R}^{n\times n},\,\|A\|_{\mathrm{F}}\leq 1}\sqrt{\mathrm{Var}(\langle AX,X\rangle)}$. Putting all these arguments together, we obtain Lemma \ref{lcclosecovariances} for bounding $\|\Sigma_{1}-\Sigma_{2}\|_{\mathrm{F}}$.

In Subsection \ref{firstmainp2proofsubsection1}, we prove Lemma \ref{lcclosemean}, which gives a bound for $\|z_{1}-z_{2}\|$ when $\|\Sigma_{1}\|_{\mathrm{op}}\leq 1$ and $\|\Sigma_{2}\|_{\mathrm{op}}\leq 1$.

In Subsection \ref{firstmainp2proofsubsection2}, we prove Lemma \ref{lcclosecovariances}, which gives a bound for $\|\Sigma_{1}-\Sigma_{2}\|_{\mathrm{F}}$.

In Subsection \ref{firstmainp2proofsubsection3}, we complete the proof of Statement (2) in Theorem \ref{firstmain} by first reducing to the case $\Sigma_{1}+\Sigma_{2}=I$ using a pushforward by a linear transformation and then applying Lemma \ref{lcclosemean} and Lemma \ref{lcclosecovariances}.
\subsection{Bounding the difference between the means}\label{firstmainp2proofsubsection1}
\begin{lemma}\label{lcsv1d}
Let $\alpha\in(0,1)$. There are no probability measures $\zeta_{1},\zeta_{2}$ on $\mathbb{R}$ such that
\begin{enumerate}[(1)]
\item $\zeta_{1}$ has mean $1$ and $\zeta_{2}$ has mean $0$
\item the variances of $\zeta_{1}$ and $\zeta_{2}$ are at most $(\frac{3}{16})^{2}\alpha(1-\alpha)$.
\item $\alpha\zeta_{1}+(1-\alpha)\zeta_{2}$ is log-concave.
\end{enumerate}
\end{lemma}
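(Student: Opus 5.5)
Assume for contradiction that such $\zeta_{1},\zeta_{2}$ exist, and let $\rho=\alpha\zeta_{1}+(1-\alpha)\zeta_{2}$, which is log-concave on $\mathbb{R}$. The plan is to exhibit a gap near the middle between the two means: $\rho$ gives little mass to an interval around $1/2$ but substantial mass on both sides of it, and this contradicts log-concavity (unimodality plus a density ratio bound).

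Write $\sigma^{2}=(\frac{3}{16})^{2}\alpha(1-\alpha)$ for the common bound on the variances. By Chebyshev, $\zeta_{1}(\{|x-1|\geq s\})\leq\sigma^{2}/s^{2}$ and $\zeta_{2}(\{|x|\geq s\})\leq\sigma^{2}/s^{2}$. Consider the three intervals $I_{0}=(-\infty,\frac14]$, $J=(\frac14,\frac34)$, $I_{1}=[\frac34,\infty)$. With $s=\frac14$,
\[\rho(J)\leq \alpha\cdot 16\sigma^{2}+(1-\alpha)\cdot16\sigma^{2}=16\sigma^{2}=\tfrac{9}{16}\alpha(1-\alpha),\]
while $\rho(I_{1})\geq\alpha(1-16\sigma^{2})$ and $\rho(I_{0})\geq(1-\alpha)(1-16\sigma^{2})$. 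Since $16\sigma^{2}\leq\frac{9}{64}$, both outer masses are bounded below by a constant times $\alpha$ and $1-\alpha$ respectively.

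To get the contradiction from log-concavity, I would use that the density $f$ of $\rho$ is unimodal and log-concave. The difficulty is that Chebyshev alone gives only a mass bound, not a pointwise bound on $f$. So I would refine the partition: split $J$ into finer pieces, or use a smaller window. Since $\rho(J)$ is small and $J$ has length $\frac12$, there is a point $m\in J$ with $f(m)\leq 2\rho(J)$. Log-concavity and unimodality then force $f$ to be monotone on one side of $m$, say decreasing on $[m,\infty)$. Moreover $\ln f$ concave implies the tail mass beyond $m$ is controlled by $f(m)$ times a scale. I would bound that scale through the variance: a one-dimensional log-concave density with $f(m)$ small at a point where it decreases has $\rho([m,\infty))\leq f(m)\cdot C\cdot\sqrt{\mathrm{Var}\,\rho}$-type bounds. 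Here $\mathrm{Var}\,\rho\leq\sigma^{2}+\alpha(1-\alpha)\leq 2\alpha(1-\alpha)$ by the law of total variance. Concretely, I plan to use the elementary fact that if $\ln f$ is concave and nonincreasing past $m$, then $f(x)\leq f(m)$ there, and the mass in $[m,m+\ell]$ is at most $\ell f(m)$. For the remaining tail I would use the comparison of $f$ on $[m,\frac34]$ with its values beyond, via the exponential tail bound for log-concave densities.

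The main obstacle is making the constants close so that the contradiction matches exactly $(3/16)^{2}\alpha(1-\alpha)$. If the density-based argument is too lossy, I would try the cleaner route first: log-concave measures on $\mathbb{R}$ satisfy $\rho(J)^{2}\geq$ const$\cdot\rho(I_{0})\rho(I_{1})$ when $J$ separates $I_{0},I_{1}$, with $J$ of length comparable to them. This follows from Prékopa–Leindler / Brunn–Minkowski in 1D: for adjacent intervals, $t\mapsto\ln\rho([a,t])$-type log-concavity of the distribution function. Specifically, $F$ and $1-F$ are log-concave, which gives $F(\frac12)^{2}\geq F(\frac14)F(\frac34)$-type inequalities. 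Combining these with the Chebyshev mass bounds should give $\frac{9}{16}\alpha(1-\alpha)\geq\rho(J)\gtrsim\sqrt{\alpha(1-\alpha)}\cdot(\text{const})$. This contradicts $\alpha(1-\alpha)\leq\frac14$ once the constants are tracked. The tracking of constants in this step is where I expect the real work.
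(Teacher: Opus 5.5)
\textbf{Verdict: genuine gap.} Your Chebyshev bookkeeping is fine; in particular $\rho(J)\le 16\sigma^2=\frac{9}{16}\alpha(1-\alpha)$ is correct. The trouble is the step that should produce the contradiction, $\rho(J)\gtrsim\sqrt{\alpha(1-\alpha)}$: you never establish it, and neither mechanism you offer delivers it as written.
\begin{itemize}
\item The density route is only a sketch. You leave open which side of $m$ is monotone, and what the constant in $\rho([m,\infty))\le C f(m)\sqrt{\mathrm{Var}\,\rho}$ is.
\item In the ``cleaner route'', the inequality $\rho(J)^{2}\ge\mathrm{const}\cdot\rho(I_0)\rho(I_1)$ is false for your $I_0,I_1$. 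They are half-lines, so $\frac12I_0+\frac12I_1=\mathbb{R}$. For example, the uniform law on $[-L,L]$ has $\rho(J)=\frac{1}{4L}\to0$ while $\rho(I_0)\rho(I_1)\to\frac14$.
\item The justification you substitute, $F(\frac12)^2\ge F(\frac14)F(\frac34)$, only involves points inside the gap, where $F$ is nearly flat. It gives no lower bound on $\rho(J)$ at all; it holds, for instance, when $F$ is constant on $[\frac14,\frac34]$.
\end{itemize}

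\textbf{The fix.} The missing ingredient is the defining inequality of a log-concave measure, $\rho(\frac12A+\frac12B)\ge\sqrt{\rho(A)\rho(B)}$. Apply it to \emph{bounded} windows whose midpoint average is exactly $J$: take $A=(-\frac14,\frac14)$ and $B=(\frac34,\frac54)$. Chebyshev gives $\rho(A)\ge(1-\alpha)(1-16\sigma^2)$ and $\rho(B)\ge\alpha(1-16\sigma^2)$. With $p=\sqrt{\alpha(1-\alpha)}\le\frac12$ this yields
\[\tfrac{55}{64}\,p\le(1-16\sigma^{2})\,p\le\rho(J)\le16\sigma^{2}=\tfrac{9}{16}\,p^{2}\le\tfrac{9}{32}\,p,\]
which is a contradiction. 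No delicate constant tracking is needed. Your worry about ``matching'' $(\frac{3}{16})^2$ is misplaced, since that value is just a sufficient hypothesis.

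\textbf{Comparison with the paper.} Once repaired, your proof is genuinely different from the paper's.
\begin{itemize}
\item The paper takes windows $W_0,W_1$ of width $\frac1m$, with $m=\lfloor\frac{1}{4\sigma}\rfloor$, around $0$ and $1$. It uses $\rho(bW_1+(1-b)W_0)\ge\rho(W_1)^b\rho(W_0)^{1-b}$ at every $b=\frac km$. The contradiction comes from $m+1$ disjoint windows carrying total mass more than $1$.
\item Your version uses a single midpoint interpolation plus a Chebyshev upper bound on the middle window. It is shorter, and it only needs $16\sigma^2(1+p)<p$.
\end{itemize}
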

\begin{proof}
Suppose that such $\zeta_{1},\zeta_{2}$ exist. Let $\sigma=\frac{3}{16}\sqrt{\alpha(1-\alpha)}$. Then the variances of $\zeta_{1}$ and $\zeta_{2}$ are at most $\sigma^{2}$, so by Chebyshev's inequality,
\[\zeta_{1}((1-2\sigma,1+2\sigma))\geq\frac{3}{4}\quad\text{and}\quad\zeta_{2}((-2\sigma,2\sigma))\geq\frac{3}{4}.\]
Let $m=\lfloor\frac{1}{4\sigma}\rfloor\in\mathbb{N}$. Then $2\sigma\leq\frac{1}{2m}$. So
\[\zeta_{1}\left(\left(1-\frac{1}{2m},1+\frac{1}{2m}\right)\right)\geq\frac{3}{4}\quad\text{and}\quad\zeta_{2}\left(\left(-\frac{1}{2m},\frac{1}{2m}\right)\right)\geq\frac{3}{4},\]
and hence, for $b=0,1$, we have
\[(\alpha\zeta_{1}+(1-\alpha)\zeta_{2})\left(\left(b-\frac{1}{2m},b+\frac{1}{2m}\right)\right)\geq\frac{3}{4}\alpha^{b}(1-\alpha)^{1-b}.\]
By log-concavity of $\alpha\zeta_{1}+(1-\alpha)\zeta_{2}$, this holds for all $b\in[0,1]$. Since the intervals $(\frac{k}{m}-\frac{1}{2m},\frac{k}{m}+\frac{1}{2m})$, for $k=0,1,\ldots,m$, are disjoint, it follows that
\[\frac{3}{4}\sum_{k=0}^{m}\alpha^{\frac{k}{m}}(1-\alpha)^{1-\frac{k}{m}}\leq 1.\]
But by re-indexing $k$ to $m-k$ and then averaging with itself, we have
\begin{eqnarray*}
\sum_{k=0}^{m}\alpha^{\frac{k}{m}}(1-\alpha)^{1-\frac{k}{m}}&=&\sum_{k=0}^{m}\alpha^{\frac{m-k}{m}}(1-\alpha)^{1-\frac{m-k}{m}}\\&=&
\sum_{k=0}^{m}\frac{1}{2}\left(\alpha^{\frac{k}{m}}(1-\alpha)^{1-\frac{k}{m}}+\alpha^{\frac{m-k}{m}}(1-\alpha)^{1-\frac{m-k}{m}}\right)\\&\geq&
\sum_{k=0}^{m}\sqrt{\alpha(1-\alpha)}=(m+1)\sqrt{\alpha(1-\alpha)},
\end{eqnarray*}
where in the third step, we use the fact that $\frac{a+b}{2}\geq\sqrt{ab}$. Therefore,
\[\frac{3}{4}(m+1)\sqrt{\alpha(1-\alpha)}\leq 1.\]
This is impossible, since $m+1=\lfloor\frac{1}{4\sigma}\rfloor+1>\frac{1}{4\sigma}=\frac{4}{3\sqrt{\alpha(1-\alpha)}}$.
\end{proof}
\begin{lemma}\label{lcclosemean1d}
Let $\alpha\in(0,1)$. If $\zeta_{1},\zeta_{2}$ are probability measures on $\mathbb{R}$ with means $t_{1},t_{2}\in\mathbb{R}$, respectively, and variances at most $1$ and $\alpha\zeta_{1}+(1-\alpha)\zeta_{2}$ is log-concave, then
\[|t_{1}-t_{2}|\leq\frac{16}{3\sqrt{\alpha(1-\alpha)}}.\]
\end{lemma}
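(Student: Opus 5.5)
We argue by contradiction and rescale so that Lemma \ref{lcsv1d} applies. Suppose $|t_{1}-t_{2}|>\frac{16}{3\sqrt{\alpha(1-\alpha)}}$; in particular $t_{1}\neq t_{2}$. Consider the affine map $T:\mathbb{R}\to\mathbb{R}$, $T(s)=\frac{s-t_{2}}{t_{1}-t_{2}}$, and set $\widetilde{\zeta}_{j}=T_{\#}\zeta_{j}$ for $j=1,2$.

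First we record what $T$ does to means and variances. Since $T$ is affine, $\widetilde{\zeta}_{1}$ has mean $T(t_{1})=1$ and $\widetilde{\zeta}_{2}$ has mean $T(t_{2})=0$. The variance of $\widetilde{\zeta}_{j}$ equals $\mathrm{Var}(\zeta_{j})/(t_{1}-t_{2})^{2}$, which is at most
\[\frac{1}{(t_{1}-t_{2})^{2}}<\frac{9\alpha(1-\alpha)}{256}=\left(\frac{3}{16}\right)^{2}\alpha(1-\alpha).\]

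Next, log-concavity is preserved. Pushforward commutes with mixtures, so
\[\alpha\widetilde{\zeta}_{1}+(1-\alpha)\widetilde{\zeta}_{2}=T_{\#}(\alpha\zeta_{1}+(1-\alpha)\zeta_{2}).\]
The right-hand side is log-concave because log-concavity is invariant under invertible affine maps, a fact already used throughout the paper.

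Thus $\widetilde{\zeta}_{1},\widetilde{\zeta}_{2}$ satisfy conditions (1)--(3) of Lemma \ref{lcsv1d}, which says no such pair exists. This contradiction gives the claimed bound. There is no real obstacle: the only point needing care is the strict versus non-strict inequality, and the strict assumption on $|t_{1}-t_{2}|$ makes the variance bound strict, which is more than Lemma \ref{lcsv1d} requires.
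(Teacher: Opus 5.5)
Your proposal is correct and is essentially the paper's proof: the same affine rescaling $s\mapsto (s-t_{2})/(t_{1}-t_{2})$, the same strict variance bound $1/(t_{1}-t_{2})^{2}<(\frac{3}{16})^{2}\alpha(1-\alpha)$, and the same contradiction with Lemma \ref{lcsv1d}, using that pushforward commutes with mixtures and preserves log-concavity.
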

\begin{proof}
Suppose for contradiction that $|t_{1}-t_{2}|>\frac{16}{3\sqrt{\alpha(1-\alpha)}}$. Define $g:\mathbb{R}\to\mathbb{R}$ by $g(t)=\frac{t-t_{2}}{t_{1}-t_{2}}$ for $t\in\mathbb{R}$. Then
\begin{enumerate}[(1)]
\item $g_{\#}\zeta_{1}$ has mean $1$ and $g_{\#}\zeta_{2}$ has mean $0$;
\item the variances of $g_{\#}\zeta_{1}$ and $g_{\#}\zeta_{2}$ are at most $\frac{1}{|t_{1}-t_{2}|^{2}}<(\frac{3}{16})^{2}\alpha(1-\alpha)$.
\item $\alpha g_{\#}\zeta_{1}+(1-\alpha)g_{\#}\zeta_{2}=g_{\#}(\alpha\zeta_{1}+(1-\alpha)\zeta_{2})$ is log-concave.
\end{enumerate}
By Lemma \ref{lcsv1d}, this is impossible.
\end{proof}
\begin{lemma}\label{lcclosemean}
Let $\alpha\in(0,1)$. If $\nu_{1}$ and $\nu_{2}$ are probability measures on $\mathbb{R}^{n}$ with means $z_{1},z_{2}\in\mathbb{R}^{n}$, respectively, $\|\mathrm{Cov}(\nu_{1})\|_{\mathrm{op}}$, $\|\mathrm{Cov}(\nu_{2})\|_{\mathrm{op}}\leq 1$ and $\alpha\nu_{1}+(1-\alpha)\nu_{2}$ is log-concave, then
\[\|z_{1}-z_{2}\|\leq\frac{16}{3\sqrt{\alpha(1-\alpha)}}.\]
\end{lemma}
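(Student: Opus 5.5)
The plan is to reduce to the one-dimensional Lemma \ref{lcclosemean1d} by projecting onto the line through $z_{1}-z_{2}$. If $z_{1}=z_{2}$ there is nothing to prove, so assume $z_{1}\neq z_{2}$ and set $u=\frac{z_{1}-z_{2}}{\|z_{1}-z_{2}\|}$, a unit vector. Define the linear functional $P:\mathbb{R}^{n}\to\mathbb{R}$ by $P(x)=\langle x,u\rangle$. Let $\zeta_{j}=P_{\#}\nu_{j}$ for $j=1,2$; these are probability measures on $\mathbb{R}$.

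First, I compute the moments of $\zeta_{j}$. By linearity of expectation, $\zeta_{j}$ has mean $t_{j}=\langle z_{j},u\rangle$. Its variance is $\langle\mathrm{Cov}(\nu_{j})u,u\rangle\leq\|\mathrm{Cov}(\nu_{j})\|_{\mathrm{op}}\|u\|^{2}\leq 1$. Moreover,
\[t_{1}-t_{2}=\langle z_{1}-z_{2},u\rangle=\|z_{1}-z_{2}\|.\]

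Second, I need $\alpha\zeta_{1}+(1-\alpha)\zeta_{2}=P_{\#}(\alpha\nu_{1}+(1-\alpha)\nu_{2})$ to be log-concave. This is the only nontrivial input. Pushforwards commute with mixtures, and log-concave measures are preserved under linear maps. In particular, one-dimensional marginals of log-concave measures are log-concave, by Prékopa's theorem; alternatively, it follows directly from the measure-theoretic definition $\mu(\lambda A+(1-\lambda)B)\geq\mu(A)^{\lambda}\mu(B)^{1-\lambda}$, since $P^{-1}(\lambda A+(1-\lambda)B)\supseteq\lambda P^{-1}(A)+(1-\lambda)P^{-1}(B)$. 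The paper already uses this invariance freely, so I would simply cite it.

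Applying Lemma \ref{lcclosemean1d} to $\zeta_{1},\zeta_{2}$ then gives
\[\|z_{1}-z_{2}\|=|t_{1}-t_{2}|\leq\frac{16}{3\sqrt{\alpha(1-\alpha)}}.\]
I do not anticipate a real obstacle; the only point requiring care is the log-concavity of the marginal, handled above.
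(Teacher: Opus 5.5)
Your proposal is correct and is essentially the paper's proof: project onto the unit vector in the direction of $z_{1}-z_{2}$ via $x\mapsto\langle x,u\rangle$, observe that the pushforwards have means $\langle z_{j},u\rangle$, variances $\langle\mathrm{Cov}(\nu_{j})u,u\rangle\leq 1$, and a log-concave mixture, then apply Lemma \ref{lcclosemean1d}. Your explicit justification that log-concavity passes to linear pushforwards, via $P^{-1}(\lambda A+(1-\lambda)B)\supseteq\lambda P^{-1}(A)+(1-\lambda)P^{-1}(B)$, is a detail the paper takes for granted.
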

\begin{proof}
We may assume that $z_{1}\neq z_{2}$ (otherwise the conclusion of the result holds). Let $w=\frac{z_{1}-z_{2}}{\|z_{1}-z_{2}\|}$. Let $w_{\#}\nu_{1}$ and $w_{\#}\nu_{2}$ be the pushforward probability measures of $\nu_{1}$ and $\nu_{2}$, respectively, by the map $x\mapsto\langle x,w\rangle$. Note that these are measures on $\mathbb{R}$ and
\begin{enumerate}[(1)]
\item $w_{\#}\nu_{1}$ has mean $\langle z_{1},w\rangle$ and variance $\langle\mathrm{Cov}(\nu_{1})w,w\rangle\leq 1$, since $\|\mathrm{Cov}(\nu_{1})\|_{\mathrm{op}}\leq 1$;
\item $w_{\#}\nu_{2}$ has mean $\langle z_{2},w\rangle$ and variance $\langle\mathrm{Cov}(\nu_{2})w,w\rangle\leq 1$, since $\|\mathrm{Cov}(\nu_{2})\|_{\mathrm{op}}\leq 1$;
\item $\alpha w_{\#}\nu_{1}+(1-\alpha)w_{\#}\nu_{2}$ is log-concave.
\end{enumerate}
Thus, by Lemma \ref{lcclosemean1d},
\[\|z_{1}-z_{2}\|=|\langle z_{1}-z_{2},w\rangle|\leq\frac{16}{3\sqrt{\alpha(1-\alpha)}}\]
\end{proof}
\subsection{Bounding the difference between the covariances}\label{firstmainp2proofsubsection2}
\begin{lemma}[Proposition 3.2 in \cite{bkt}]\label{bktprop32}
Let $\alpha\in(0,1)$. Suppose that $\nu_{1},\nu_{2}\in\mathcal{P}_{2}(\mathbb{R}^{n})$ and $X$ is a random vector in $\mathbb{R}^{n}$ distributed according to the mixture $\alpha\nu_{1}+(1-\alpha)\nu_{2}$. Then
\[\|B_{\nu_{1}}-B_{\nu_{2}}\|_{\mathrm{F}}\leq\frac{1}{\alpha(1-\alpha)}\sup_{\|A\|_{\mathrm{F}}\leq 1}\sqrt{\mathrm{Var}(\langle AX,X\rangle)},\]
where the supremum is over all $A\in\mathbb{R}^{n\times n}$ with $\|A\|_{\mathrm{F}}\leq 1$.
\end{lemma}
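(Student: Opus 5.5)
The plan is to realize $X$ as a two-stage random vector and use the law of total variance, choosing $A$ aligned with $B_{\nu_{1}}-B_{\nu_{2}}$. In fact this should give the slightly stronger constant $\frac{1}{\sqrt{\alpha(1-\alpha)}}$, which implies the stated bound since $\alpha(1-\alpha)\leq\sqrt{\alpha(1-\alpha)}$.

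\emph{Coupling and finiteness.} Realize $X$ by letting $J$ be a Bernoulli random variable with $\mathbb{P}(J=1)=\alpha$. Conditionally on $J=1$, let $X\sim\nu_{1}$, and conditionally on $J=0$, let $X\sim\nu_{2}$; then $X$ has law $\alpha\nu_{1}+(1-\alpha)\nu_{2}$. Since the variance depends only on the law of $X$, we may work with this coupling. If $B_{\nu_{1}}=B_{\nu_{2}}$ there is nothing to prove. If $\mathrm{Var}(\langle AX,X\rangle)=\infty$ for some admissible $A$ (only second moments are assumed), the right-hand side is $+\infty$ and again there is nothing to prove. So assume all the relevant variances are finite; then $\langle AX,X\rangle$ is square integrable and the law of total variance applies.

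\emph{Lower bound on the variance.} For fixed $A$, the law of total variance gives
\[\mathrm{Var}(\langle AX,X\rangle)\geq\mathrm{Var}\big(\mathbb{E}[\langle AX,X\rangle\mid J]\big).\]
Since $\mathbb{E}[\langle AX,X\rangle\mid J=1]=\mathrm{Tr}(AB_{\nu_{1}})$ and $\mathbb{E}[\langle AX,X\rangle\mid J=0]=\mathrm{Tr}(AB_{\nu_{2}})$, the conditional expectation takes two values with probabilities $\alpha$ and $1-\alpha$. Hence its variance is
\[\alpha(1-\alpha)\big(\mathrm{Tr}(A(B_{\nu_{1}}-B_{\nu_{2}}))\big)^{2}.\]

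\emph{Choice of $A$.} Take $A=(B_{\nu_{1}}-B_{\nu_{2}})/\|B_{\nu_{1}}-B_{\nu_{2}}\|_{\mathrm{F}}$, which has $\|A\|_{\mathrm{F}}=1$. Because this matrix is symmetric, $\mathrm{Tr}(A(B_{\nu_{1}}-B_{\nu_{2}}))=\|B_{\nu_{1}}-B_{\nu_{2}}\|_{\mathrm{F}}$. Therefore
\[\sup_{\|A\|_{\mathrm{F}}\leq1}\sqrt{\mathrm{Var}(\langle AX,X\rangle)}\geq\sqrt{\alpha(1-\alpha)}\,\|B_{\nu_{1}}-B_{\nu_{2}}\|_{\mathrm{F}}.\]
Combining this with $\alpha(1-\alpha)\leq\sqrt{\alpha(1-\alpha)}$ yields the lemma.

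There is no real obstacle here. The only point needing care is the integrability bookkeeping, namely disposing of the case of infinite variance before invoking the law of total variance.
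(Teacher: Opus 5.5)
Your argument is correct, but it takes a different route from the paper.

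\textbf{The paper's route.} The paper never conditions on the component label. For arbitrary $A$ and $b$ it proceeds in four steps:
\begin{enumerate}[(1)]
\item it writes $\mathrm{Tr}(A(B_{\nu_{1}}-B_{\nu_{2}}))$ as $\int(\langle Ax,x\rangle-b)\,d\nu_{1}-\int(\langle Ax,x\rangle-b)\,d\nu_{2}$;
\item it bounds this difference by the sum $\int|\langle Ax,x\rangle-b|\,d\nu_{1}+\int|\langle Ax,x\rangle-b|\,d\nu_{2}$;
\item it dominates $\nu_{1}+\nu_{2}$ by $\frac{1}{\alpha(1-\alpha)}(\alpha\nu_{1}+(1-\alpha)\nu_{2})$ and applies Cauchy--Schwarz with $b=\mathbb{E}\langle AX,X\rangle$;
\item it takes the supremum over $\|A\|_{\mathrm{F}}\leq 1$ on the left, using Frobenius duality.
\end{enumerate}

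\textbf{Your route.} You use the law of total variance with the latent Bernoulli label. You discard the within-component term and choose $A$ aligned with $B_{\nu_{1}}-B_{\nu_{2}}$.

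\textbf{What each route buys.} Yours gives the sharper constant $1/\sqrt{\alpha(1-\alpha)}$, since it captures the between-component variance exactly. The paper's chain of triangle inequality, domination and Cauchy--Schwarz only yields $1/(\alpha(1-\alpha))$. Your constant would carry through Lemma~\ref{lcclosecovariances} and improve the covariance part of the bound in Theorem~\ref{firstmain}(2) to order $\sqrt{\ln n}/\sqrt{\alpha(1-\alpha)}$. The mean part there still scales like $1/(\alpha(1-\alpha))$.

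The paper's route, for its part, is a pure $L^{1}$ estimate. It actually shows $|\mathrm{Tr}(A(B_{\nu_{1}}-B_{\nu_{2}}))|\leq\frac{1}{\alpha(1-\alpha)}\inf_{b}\mathbb{E}|\langle AX,X\rangle-b|$. This needs only the integrability of $\langle AX,X\rangle$ that $\mathcal{P}_{2}$ guarantees, so it requires no conditioning and no separate handling of the infinite-variance case.
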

\begin{proof}
For every $A\in\mathbb{R}^{n\times n}$ and every $b\in\mathbb{R}$, we have
\begin{eqnarray*}
\mathrm{Tr}(A(B_{\nu_{1}}-B_{\nu_{2}}))&=&
\int_{\mathbb{R}^{n}}\langle Ax,x\rangle\,d\nu_{1}(x)-\int_{\mathbb{R}^{n}}\langle Ax,x\rangle\,d\nu_{2}(x)\\&=&
\int_{\mathbb{R}^{n}}\langle Ax,x\rangle-b\,d\nu_{1}(x)-\int_{\mathbb{R}^{n}}\langle Ax,x\rangle-b\,d\nu_{2}(x)\\&\leq&
\int_{\mathbb{R}^{n}}|\langle Ax,x\rangle-b|\,d\nu_{1}(x)+\int_{\mathbb{R}^{n}}|\langle Ax,x\rangle-b|\,d\nu_{2}(x)\\&\leq&
\frac{1}{\alpha(1-\alpha)}\int_{\mathbb{R}^{n}}|\langle Ax,x\rangle-b|\,d(\alpha\nu_{1}+(1-\alpha)\nu_{2})(x)\\&=&
\frac{1}{\alpha(1-\alpha)}\mathbb{E}|\langle AX,X\rangle-b|\\&\leq&
\frac{1}{\alpha(1-\alpha)}\left(\mathbb{E}|\langle AX,X\rangle-b|^{2}\right)^{1/2}.
\end{eqnarray*}
The result follows by first taking $b=\mathbb{E}\langle AX,X\rangle$ and then taking supremum over all $A\in\mathbb{R}^{n\times n}$ with $\|A\|_{\mathrm{F}}\leq 1$.
\end{proof}
\begin{lemma}\label{gip}
If $M\in\mathbb{R}^{n\times n}$ and $P$ is an orthogonal projection on $\mathbb{R}^{n}$ with rank at most $2$, then
\[\|M\|_{\mathrm{F}}\leq\|(I-P)M(I-P)\|_{\mathrm{F}}+4\|M\|_{\mathrm{op}}.\]
\end{lemma}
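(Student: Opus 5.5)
Write $Q=I-P$ and decompose $M$ into four blocks with respect to the orthogonal decomposition $\mathbb{R}^{n}=\mathrm{ran}(Q)\oplus\mathrm{ran}(P)$:
\[M=QMQ+QMP+PMQ+PMP.\]
The triangle inequality for the Frobenius norm then gives
\[\|M\|_{\mathrm{F}}\leq\|QMQ\|_{\mathrm{F}}+\|QMP\|_{\mathrm{F}}+\|PMQ\|_{\mathrm{F}}+\|PMP\|_{\mathrm{F}}.\]
The first term is the one appearing on the right-hand side of the claimed inequality. It therefore suffices to show that each of the remaining three terms is at most $\sqrt{2}\,\|M\|_{\mathrm{op}}$. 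The resulting total bound $3\sqrt{2}\|M\|_{\mathrm{op}}\leq 4.25\|M\|_{\mathrm{op}}$ is slightly too weak, so I will sharpen this step below.

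The key tool is the elementary bound $\|N\|_{\mathrm{F}}\leq\sqrt{\mathrm{rank}(N)}\,\|N\|_{\mathrm{op}}$. It follows from writing $\|N\|_{\mathrm{F}}^{2}$ as the sum of the squared singular values, of which at most $\mathrm{rank}(N)$ are nonzero, each bounded by $\|N\|_{\mathrm{op}}$. Since $\mathrm{rank}(P)\leq 2$, each of $QMP$, $PMQ$ and $PMP$ has rank at most $2$. Since $\|P\|_{\mathrm{op}},\|Q\|_{\mathrm{op}}\leq 1$, each also has operator norm at most $\|M\|_{\mathrm{op}}$.

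To get the constant $4$, I would group the terms more efficiently rather than treating all three separately. The matrix $MP=QMP+PMP$ has rank at most $2$ and operator norm at most $\|M\|_{\mathrm{op}}$, so $\|MP\|_{\mathrm{F}}\leq\sqrt{2}\|M\|_{\mathrm{op}}$. Similarly $\|PMQ\|_{\mathrm{F}}\leq\sqrt{2}\|M\|_{\mathrm{op}}$. Using $M=QMQ+PMQ+MP$, we obtain
\[\|M\|_{\mathrm{F}}\leq\|QMQ\|_{\mathrm{F}}+2\sqrt{2}\|M\|_{\mathrm{op}}\leq\|(I-P)M(I-P)\|_{\mathrm{F}}+4\|M\|_{\mathrm{op}}.\]

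The only point needing care is this bookkeeping of the constant. Everything else is routine, and I expect no real obstacle.
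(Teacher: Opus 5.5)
Your proof is correct and is essentially the paper's argument. The paper uses the same decomposition $M=(I-P)M(I-P)+PM(I-P)+MP$, bounds the last two terms by $\sqrt{2}\|M\|_{\mathrm{op}}$ each via the rank-$2$ structure (passing through $\|PM(I-P)\|_{\mathrm{F}}\leq\|PM\|_{\mathrm{F}}$), and absorbs $2\sqrt{2}$ into the constant $4$, just as you do after discarding your initial four-term split.
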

\begin{proof}
\begin{eqnarray*}
\|M\|_{\mathrm{F}}&=&
\|(I-P)M(I-P)+PM(I-P)+MP\|_{\mathrm{F}}\\&\leq&
\|(I-P)M(I-P)\|_{\mathrm{F}}+\|PM\|_{\mathrm{F}}+\|MP\|_{\mathrm{F}}\\&\leq&
\|(I-P)M(I-P)\|_{\mathrm{F}}+4\|M\|_{\mathrm{op}}.
\end{eqnarray*}
\end{proof}
\begin{lemma}\label{lcclosecovariances}
Let $\alpha\in(0,1)$. Suppose that $\nu_{1}$ and $\nu_{2}$ are probability measures on $\mathbb{R}^{n}$ such that $\alpha\nu_{1}+(1-\alpha)\nu_{2}$ is log-concave. Then
\[\|\mathrm{Cov}(\nu_{1})-\mathrm{Cov}(\nu_{2})\|_{\mathrm{F}}\leq\frac{C\sqrt{\ln n}}{\alpha(1-\alpha)}\left(\|\mathrm{Cov}(\nu_{1})\|_{\mathrm{op}}+\|\mathrm{Cov}(\nu_{2})\|_{\mathrm{op}}\right),\]
where $C>0$ is an absolute constant.
\end{lemma}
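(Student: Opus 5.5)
The plan is the one sketched at the start of this section: project away the means, compare second moments via Lemma \ref{bktprop32}, control the variance via Lemma \ref{nsthinshell}, and absorb the projection error with Lemma \ref{gip}.

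Let $\Sigma_{j}=\mathrm{Cov}(\nu_{j})$ and $z_{j}=z(\nu_{j})$. By translation invariance of log-concavity and covariances, we may shift both measures by the mean of the mixture $\alpha z_{1}+(1-\alpha)z_{2}$. The mixture $\nu=\alpha\nu_{1}+(1-\alpha)\nu_{2}$ then has mean $0$.

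\textbf{Projecting away the means.} Let $P$ be the orthogonal projection onto $\mathrm{span}\{z_{1},z_{2}\}$, which has rank at most $2$. By Lemma \ref{gip} applied to $M=\Sigma_{1}-\Sigma_{2}$,
\[\|\Sigma_{1}-\Sigma_{2}\|_{\mathrm{F}}\leq\|(I-P)(\Sigma_{1}-\Sigma_{2})(I-P)\|_{\mathrm{F}}+4(\|\Sigma_{1}\|_{\mathrm{op}}+\|\Sigma_{2}\|_{\mathrm{op}}).\]
So it suffices to bound the first term. Let $Q=I-P$. Then $Q_{\#}\nu_{j}$ has mean $Qz_{j}=0$, so its covariance $Q\Sigma_{j}Q$ equals its second moment $B_{Q_{\#}\nu_{j}}$. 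Moreover $Q_{\#}\nu$ is log-concave with mean $0$, being a linear image of a centered log-concave measure.

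\textbf{Comparing second moments.} Lemma \ref{bktprop32}, applied to $Q_{\#}\nu_{1},Q_{\#}\nu_{2}$ and $Y=QX$ with $X\sim\nu$, gives
\[\|Q(\Sigma_{1}-\Sigma_{2})Q\|_{\mathrm{F}}\leq\frac{1}{\alpha(1-\alpha)}\sup_{\|A\|_{\mathrm{F}}\leq1}\sqrt{\mathrm{Var}(\langle AY,Y\rangle)}.\]
By (\ref{nsthinshelleq1}) of Lemma \ref{nsthinshell}, this supremum is at most $C\sqrt{\ln n}\,\|B_{Q_{\#}\nu}\|_{\mathrm{op}}$.

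\textbf{Main obstacle: bounding $\|B_{Q_{\#}\nu}\|_{\mathrm{op}}$.} Here we need to avoid any dependence on $\|z_{1}-z_{2}\|$. Since $Qz_{j}=0$,
\[B_{Q_{\#}\nu}=\alpha Q\Sigma_{1}Q+(1-\alpha)Q\Sigma_{2}Q,\]
because the mean-outer-product terms $Qz_{j}z_{j}^{T}Q$ vanish. Hence $\|B_{Q_{\#}\nu}\|_{\mathrm{op}}\leq\|\Sigma_{1}\|_{\mathrm{op}}+\|\Sigma_{2}\|_{\mathrm{op}}$. This is exactly why we project before applying the thin-shell bound. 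Without the projection, $B_{\nu}$ would include the term $\alpha(1-\alpha)(z_{1}-z_{2})(z_{1}-z_{2})^{T}$. That term could be controlled only through Lemma \ref{lcclosemean}, which costs an extra factor of $1/(\alpha(1-\alpha))$.

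\textbf{Conclusion.} Combining the bounds gives
\[\|\Sigma_{1}-\Sigma_{2}\|_{\mathrm{F}}\leq\left(\frac{C\sqrt{\ln n}}{\alpha(1-\alpha)}+4\right)(\|\Sigma_{1}\|_{\mathrm{op}}+\|\Sigma_{2}\|_{\mathrm{op}}).\]
The constant $4$ is absorbed into the first term since $\alpha(1-\alpha)\leq\frac14$ and $\ln n\geq\ln2$. One technicality remains: Lemma \ref{nsthinshell} is stated in dimension $n$ with a $\sqrt{\ln n}$ factor. On $\mathrm{ran}(Q)$ the dimension is at most $n$, so the same factor works, either by applying the lemma directly in $\mathbb{R}^{n}$ or by restricting to that subspace.
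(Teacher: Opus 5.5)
Your proof is correct and follows the paper's argument essentially step for step. You project off $\mathrm{span}\{z_{1},z_{2}\}$, apply Lemma \ref{bktprop32} and (\ref{nsthinshelleq1}) of Lemma \ref{nsthinshell} to the centered projected mixture, bound its second moment by $\|\mathrm{Cov}(\nu_{1})\|_{\mathrm{op}}+\|\mathrm{Cov}(\nu_{2})\|_{\mathrm{op}}$, and recover the full Frobenius norm via Lemma \ref{gip}. Your preliminary translation is harmless but unnecessary, since the projected means vanish anyway, and your explicit absorption of the additive $4$ spells out a step the paper leaves implicit.

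One small correction to your side remark: without the projection, the term $\alpha(1-\alpha)(z_{1}-z_{2})(z_{1}-z_{2})^{T}$ in $B_{\nu}$ would cost only an absolute constant, not an extra factor $1/(\alpha(1-\alpha))$. Indeed, rescaling Lemma \ref{lcclosemean} gives $\alpha(1-\alpha)\|z_{1}-z_{2}\|^{2}\leq\frac{256}{9}\max_{j}\|\mathrm{Cov}(\nu_{j})\|_{\mathrm{op}}$.
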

\begin{proof}
Let $z_{1}$ and $z_{2}$ be the means of $\nu_{1}$ and $\nu_{2}$, respectively. Let $P$ be orthogonal projection from $\mathbb{R}^{n}$ onto the span of $z_{1},z_{2}$. Then $I-P$ is an operator on $\mathbb{R}^{n}$. Consider the pushforward measures $\widetilde{\nu}_{1}:=(I-P)_{\#}\nu_{1}$ and $\widetilde{\nu}_{2}:=(I-P)_{\#}\nu_{2}$ on $\mathbb{R}^{n}$. Observe that
\begin{enumerate}[(1)]
\item $\widetilde{\nu}_{1}$ has mean $(I-P)z_{1}=0$ and covariance $(I-P)\mathrm{Cov}(\nu_{1})(I-P)$
\item $\widetilde{\nu}_{2}$ has mean $(I-P)z_{2}=0$ and covariance $(I-P)\mathrm{Cov}(\nu_{2})(I-P)$
\item $\alpha\widetilde{\nu}_{1}+(1-\alpha)\widetilde{\nu}_{2}$ is log-concave.
\end{enumerate}
Let $\widetilde{\nu}:=\alpha\widetilde{\nu}_{1}+(1-\alpha)\widetilde{\nu}_{2}$ and let $X$ be a random vector in $\mathbb{R}^{n}$ distributed according to $\widetilde{\nu}$. By Lemma \ref{bktprop32} and Lemma \ref{nsthinshell}, we have
\[\|B_{\widetilde{\nu}_{1}}-B_{\widetilde{\nu}_{2}}\|_{\mathrm{F}}\leq\frac{1}{\alpha(1-\alpha)}\sup_{\|A\|_{\mathrm{F}}\leq 1}\sqrt{\mathrm{Var}(\langle AX,X\rangle)}\leq\frac{C\sqrt{\ln n}}{\alpha(1-\alpha)}\cdot\|B_{\widetilde{\nu}}\|_{\mathrm{op}}.\]
But
\begin{eqnarray*}
\|B_{\widetilde{\nu}}\|_{\mathrm{op}}&=&\|\alpha B_{\widetilde{\nu}_{1}}+(1-\alpha)B_{\widetilde{\nu}_{2}}\|_{\mathrm{op}}\\&\leq&
\|B_{\widetilde{\nu}_{1}}+B_{\widetilde{\nu}_{2}}\|_{\mathrm{op}}\\&=&
\|(I-P)(\mathrm{Cov}(\nu_{1})+\mathrm{Cov}(\nu_{2}))(I-P)\|_{\mathrm{op}}\\&\leq&
\|\mathrm{Cov}(\nu_{1})+\mathrm{Cov}(\nu_{2})\|_{\mathrm{op}}.
\end{eqnarray*}
Therefore,
\[\|B_{\widetilde{\nu}_{1}}-B_{\widetilde{\nu}_{2}}\|_{\mathrm{F}}\leq\frac{C\sqrt{\ln n}}{\alpha(1-\alpha)}\cdot\|\mathrm{Cov}(\nu_{1})+\mathrm{Cov}(\nu_{2})\|_{\mathrm{op}}.\]
Finally, the result follows by observing that, using Lemma \ref{gip}, we have
\begin{align*}
&\|\mathrm{Cov}(\nu_{1})-\mathrm{Cov}(\nu_{2})\|_{\mathrm{F}}\\\leq&
\|(I-P)(\mathrm{Cov}(\nu_{1})-\mathrm{Cov}(\nu_{2}))(I-P)\|_{\mathrm{F}}+4\|\mathrm{Cov}(\nu_{1})-\mathrm{Cov}(\nu_{2})\|_{\mathrm{op}}\\=&
\|B_{\widetilde{\nu}_{1}}-B_{\widetilde{\nu}_{2}}\|_{\mathrm{F}}+
4\|\mathrm{Cov}(\nu_{1})-\mathrm{Cov}(\nu_{2})\|_{\mathrm{op}}.
\end{align*}
\end{proof}
\subsection{Completing the proof of the first main result part (2)}\label{firstmainp2proofsubsection3}
\begin{proof}[Proof of Theorem \ref{firstmain} Statement (2)]
We first prove it assuming that $\Sigma_{1}+\Sigma_{2}$ is invertible.

Let $\nu_{1}=(\Sigma_{1}+\Sigma_{2})_{\#}^{-1/2}\mu_{1}$ and $\nu_{2}=(\Sigma_{1}+\Sigma_{2})_{\#}^{-1/2}\mu_{2}$ be the pushforward measures of $\mu_{1}$ and $\mu_{2}$, respectively, by the operator $(\Sigma_{1}+\Sigma_{2})^{-1/2}$. We have
\begin{enumerate}[(1)]
\item $\nu_{1}$ has mean $(\Sigma_{1}+\Sigma_{2})^{-1/2}z_{1}$ and covariance $(\Sigma_{1}+\Sigma_{2})^{-1/2}\Sigma_{1}(\Sigma_{1}+\Sigma_{2})^{-1/2}$
\item $\nu_{2}$ has mean $(\Sigma_{1}+\Sigma_{2})^{-1/2}z_{2}$ and covariance $(\Sigma_{1}+\Sigma_{2})^{-1/2}\Sigma_{2}(\Sigma_{1}+\Sigma_{2})^{-1/2}$
\item $\alpha\nu_{1}+(1-\alpha)\nu_{2}$ is log-concave.
\end{enumerate}
Thus, $\mathrm{Cov}(\nu_{1})+\mathrm{Cov}(\nu_{2})=I$, so $\|\mathrm{Cov}(\nu_{1})\|_{\mathrm{op}}\leq 1$ and $\|\mathrm{Cov}(\nu_{2})\|_{\mathrm{op}}\leq 1$. So by Lemma \ref{lcclosemean},
\[\|(\Sigma_{1}+\Sigma_{2})^{-1/2}(z_{1}-z_{2})\|\leq\frac{16}{3\sqrt{\alpha(1-\alpha)}},\]
and by Lemma \ref{lcclosecovariances},
\[\|(\Sigma_{1}+\Sigma_{2})^{-1/2}(\Sigma_{1}-\Sigma_{2})(\Sigma_{1}+\Sigma_{2})^{-1/2}\|_{\mathrm{F}}\leq\frac{C\sqrt{\ln n}}{\alpha(1-\alpha)}.\]
This completes the proof of Statement (2) in Theorem \ref{firstmain} when $\Sigma_{1}+\Sigma_{2}$ is invertible.

Next, we prove Statement (2) in Theorem \ref{firstmain} when $z_{2}=0$ but without assuming that $\Sigma_{1}+\Sigma_{2}$ is invertible. We need to show that $z_{1}\in\mathrm{ran}(\Sigma_{1}+\Sigma_{2})$. Let $Q$ be the orthogonal projection from $\mathbb{R}^{n}$ onto the orthogonal complement $\mathrm{ran}(\Sigma_{1}+\Sigma_{2})^{\perp}$, which is equal to $\mathrm{ran}(\Sigma_{1})^{\perp}\cap\mathrm{ran}(\Sigma_{2})^{\perp}$ (see Appendix \ref{inversesection}). Since $\alpha\mu_{1}+(1-\alpha)\mu_{2}$ is log-concave, $\alpha Q_{\#}\mu_{1}+(1-\alpha)Q_{\#}\mu_{2}$ is also log-concave. Observe that $Q_{\#}\mu_{1}$ has mean $Qz_{1}$ and covariance $Q\Sigma_{1}Q=0$, whereas $Q_{\#}\mu_{2}$ has mean $Qz_{2}=0$ and covariance $Q\Sigma_{2}Q=0$. Thus, we have $Q_{\#}\mu_{1}=\delta_{Qz_{1}}$ and $Q_{\#}\mu_{2}=\delta_{0}$, where $\delta_{x}$ denotes the probability measure with an atom at $x$ of mass $1$. But the only way $\alpha Q_{\#}\mu_{1}+(1-\alpha)Q_{\#}\mu_{2}=\alpha\delta_{Qz_{1}}+(1-\alpha)\delta_{0}$ can be log-concave is that $Qz_{1}=0$. Thus, $z_{1}\in\mathrm{ran}(\Sigma_{1}+\Sigma_{2})$. Now, we have $Q_{\#}\mu_{1}=Q_{\#}\mu_{2}=\delta_{0}$ and so the measures $\mu_{1}$ and $\mu_{2}$ are both supported on $\mathrm{ran}(\Sigma_{1}+\Sigma_{2})$. Moreover, $\Sigma_{1}+\Sigma_{2}$ is an invertible operator on $\mathrm{ran}(\Sigma_{1}+\Sigma_{2})$. Therefore, by restricting the ambient space $\mathbb{R}^{n}$ to $\mathrm{ran}(\Sigma_{1}+\Sigma_{2})$ and using what we have shown above, this completes the proof when $z_{2}=0$.

Finally, if $z_{2}\neq 0$, then we can translate the probability measures $\mu_{1}$ and $\mu_{2}$ by the vector $-z_{2}$ so that the translated $\mu_{2}$ has mean $0$ and apply what we have shown above.
\end{proof}
\section{Proof of the second main result}\label{secondmainproofsection}
In this section, we prove Theorem \ref{secondmain}. This result actually has 3 separate parts. We need to show that if at least one of the conditions holds
\begin{enumerate}[(i)]
\item $z_{1}-z_{2}\notin\mathrm{ran}(\Sigma_{1}+\Sigma_{2})$, or
\item $z_{1}-z_{2}\in\mathrm{ran}(\Sigma_{1}+\Sigma_{2})$ and $\|(\Sigma_{1}+\Sigma_{2})^{-1/2}(z_{1}-z_{2})\|^{2}$ is large enough, or
\item $\|(\Sigma_{1}+\Sigma_{2})^{-1/2}(\Sigma_{1}-\Sigma_{2})(\Sigma_{1}+\Sigma_{2})^{-1/2}\|$ is large enough,
\end{enumerate}
then there is a quadratic classifier set $L$ separating $\mu_{1}$ and $\mu_{2}$ and $L$ depends only on $z_{1},z_{2},\Sigma_{1},\Sigma_{2}$. The idea of the proof is as follows.

If condition (i) holds, the proof is straightforward, since $\mu_{1}$ and $\mu_{2}$ are supported on parallel subspaces $z_{1}+\mathrm{ran}(\Sigma_{1}+\Sigma_{2})$ and $z_{2}+\mathrm{ran}(\Sigma_{1}+\Sigma_{2})$ and so we can use a hyperplane to separate $\mu_{1}$ and $\mu_{2}$.

For the proofs when condition (ii) holds or when condition (iii) holds, by using a pushforward by a linear transformation, it suffices to prove it when $\Sigma_{1}+\Sigma_{2}=I$. Now, assume $\Sigma_{1}+\Sigma_{2}=I$ and we have $\|\Sigma_{1}\|_{\mathrm{op}}\leq 1$ and $\|\Sigma_{2}\|_{\mathrm{op}}\leq 1$.

If condition (ii) holds, then the means $z_{1}$ and $z_{2}$ of $\mu_{1}$ and $\mu_{2}$, respectively, are far apart, and we can separate $\mu_{1}$ and $\mu_{2}$ by a hyperplane (see Lemma \ref{meanseparation}). Indeed, if $x\in\mathbb{R}^{n}$ is sampled from $\mu_{1}$, then with high probability, we have
\[\left\langle x,\frac{z_{1}-z_{2}}{\|z_{1}-z_{2}\|}\right\rangle=\left\langle z_{1},\frac{z_{1}-z_{2}}{\|z_{1}-z_{2}\|}\right\rangle+O(1),\]
whereas if $x\in\mathbb{R}^{n}$ is sampled from $\mu_{2}$, then with high probability, we have
\[\left\langle x,\frac{z_{1}-z_{2}}{\|z_{1}-z_{2}\|}\right\rangle=\left\langle z_{2},\frac{z_{1}-z_{2}}{\|z_{1}-z_{2}\|}\right\rangle+O(1).\]
But when $z_{1}$ and $z_{2}$ are far apart, the numbers $\left\langle z_{1},\frac{z_{1}-z_{2}}{\|z_{1}-z_{2}\|}\right\rangle$ and $\left\langle z_{2},\frac{z_{1}-z_{2}}{\|z_{1}-z_{2}\|}\right\rangle$ will be far apart.

If condition (iii) holds, then the covariances $\Sigma_{1}$ and $\Sigma_{2}$ are far apart, and we can separate $\mu_{1}$ and $\mu_{2}$ using a quadratic set (see Lemma \ref{covseparation}). Indeed, far apart covariances imply far apart second moments after we project onto the orthogonal complements of the span of $z_{1},z_{2}$. But two probability measures having far apart second moments implies that after we lift everything by the map $x\mapsto xx^{T}\in\mathbb{R}^{n\times n}$, the means of the lifted measures are far apart and we can apply the above argument where we show that the result holds when condition (ii). The key ingredient that enables this to work for the lifted measures is the non-isotropic thin-shell bound Lemma \ref{nsthinshell}.

In Subsection \ref{secondmainproofsubsection1}, we prove Lemma \ref{meanseparation}, which says that we can separate $\mu_{1}$ and $\mu_{2}$ by a hyperplane when $z_{1}$ and $z_{2}$ are far apart, $\|\mathrm{Cov}(\mu_{1})\|_{\mathrm{op}}\leq 1$ and $\|\mathrm{Cov}(\mu_{2})\|_{\mathrm{op}}\leq 1$.

In Subsection \ref{secondmainproofsubsection2}, we first obtain Lemma \ref{smseparation}, which proves separation of $\mu_{1}$ and $\mu_{2}$ when the second moments are far apart. We then obtain Lemma \ref{covseparation}, which proves separation when the covariances are far apart.

In Subsection \ref{secondmainproofsubsection3}, we complete the proof of Theorem \ref{secondmain} by first reducing to the case $\Sigma_{1}+\Sigma_{2}=I$ using a pushforward by a linear transformation and then applying Lemma \ref{meanseparation} and Lemma \ref{covseparation}.
\subsection{Separation when the means are far apart}\label{secondmainproofsubsection1}
\begin{lemma}[\cite{agm}, Remark 3.5.12]\label{lcsubexp}
If $X$ is a log-concave random vector in $\mathbb{R}^{n}$, then
\[\mathbb{P}(|\langle X,w\rangle|\geq t\cdot\mathbb{E}|\langle X,w\rangle|)\leq 2e^{-ct},\]
for all $t\geq 0$ and $w\in\mathbb{R}^{n}$, where $c>0$ is an absolute constant.
\end{lemma}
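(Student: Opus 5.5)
The plan is to reduce to a one-dimensional statement about log-concave densities and use the fact that a one-dimensional log-concave distribution has exponential tails on the scale of its first absolute moment. Note first that $\langle X,w\rangle$ is a log-concave random variable on $\mathbb{R}$, since linear images of log-concave measures are log-concave. So it suffices to show: if $Y$ is a log-concave real random variable, then
\[\mathbb{P}(|Y|\geq t\,\mathbb{E}|Y|)\leq 2e^{-ct}\qquad\text{for all }t\geq 0.\]
If $\mathbb{E}|Y|=0$ then $Y=0$ a.s., and for $t>0$ the event $\{|Y|\geq 0\}$ has probability $1\leq 2e^{-ct}$ only when $ct\le\ln 2$. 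This degenerate case needs separate care: it is simplest to interpret the statement as concerning nondegenerate $Y$, or to note that a point mass gives $\mathbb{P}(|Y|>0)=0$, with a strict inequality. I would handle it with the strict-inequality convention. For $t\leq t_0$ (a fixed constant) the bound is trivial once $c\le \ln 2/t_0$.

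The core step is Borell's lemma. Let $K=[-s,s]$ with $s=3\,\mathbb{E}|Y|$. By Markov's inequality, $\mathbb{P}(Y\in K)\geq 2/3$. For $t\geq 1$, log-concavity of the law of $Y$ gives the inclusion $\mathbb{R}\setminus (tK)\supset$ a set $A$ with $K^{c}\supseteq \frac{2}{t+1}A^{c}+\frac{t-1}{t+1}K$. Applying the Brunn--Minkowski-type inequality for log-concave measures, this yields Borell's estimate
\[\mathbb{P}(|Y|>ts)\leq \theta\Big(\frac{1-\theta}{\theta}\Big)^{(t+1)/2},\qquad \theta=\mathbb{P}(Y\in K)\geq \tfrac23 .\]
Since $(1-\theta)/\theta\leq 1/2$, this is at most $2^{-(t+1)/2}$, which decays exponentially in $t$. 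Rescaling $t$ by the factor $3$ then gives $2e^{-ct}$ with an absolute constant $c$.

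Alternatively, I could argue directly with the one-dimensional density $e^{-\phi}$, $\phi$ convex. The key claim is that the density at the median-scale point controls everything: the tail beyond $x$ is at most $f(x)$ times $1/|\phi'(x)|$. Combining this with the lower bound on $\mathbb{E}|Y|$ coming from the mass near the origin gives the exponential tail.

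The main obstacle is stating the Borell-lemma inclusion correctly; everything else is Markov's inequality and adjusting constants. Since this is the standard result cited from \cite{agm}, I would simply invoke Borell's lemma for the symmetric convex set $K$ and track the constants.
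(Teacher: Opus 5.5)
Your argument is correct and is the standard proof behind the fact the paper cites from \cite{agm}; the paper gives no proof of its own. Markov's inequality puts mass at least $2/3$ on the symmetric slab $\{|\langle x,w\rangle|\le 3\,\mathbb{E}|\langle X,w\rangle|\}$, and Borell's lemma turns this into the tail bound $2^{-(t+1)/2}$ for its $t$-fold dilate. You are also right that the degenerate case $\mathbb{E}|\langle X,w\rangle|=0$ (e.g.\ $w=0$) makes the statement literally false as written, so it must be excluded or read with a strict inequality.

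The one slip is the inclusion you wrote. It should be $\mathbb{R}\setminus K\supseteq\frac{2}{t+1}(\mathbb{R}\setminus tK)+\frac{t-1}{t+1}K$, which follows from convexity and symmetry of $K$ alone. Log-concavity enters only through the inequality $\mathbb{P}(Y\notin K)\geq\mathbb{P}(Y\notin tK)^{2/(t+1)}\,\mathbb{P}(Y\in K)^{(t-1)/(t+1)}$.
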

\begin{lemma}\label{lcsubexp2}
If $\nu$ is a log-concave probability measure on $\mathbb{R}^{n}$ with mean $z\in\mathbb{R}^{n}$ and $\|\mathrm{Cov}(\nu)\|_{\mathrm{op}}\leq 1$, then
\[\nu(\{x\in\mathbb{R}^{n}:\,|\langle x-z,w\rangle|\geq t\|w\|\})\leq 2e^{-ct},\]
for all $t\geq 0$ and $w\in\mathbb{R}^{n}$, where $c>0$ is an absolute constant.
\end{lemma}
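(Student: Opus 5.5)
We will reduce Lemma \ref{lcsubexp2} to Lemma \ref{lcsubexp} applied to the centered random vector. Let $Y$ be a random vector distributed according to $\nu$ and set $X=Y-z$. Translation preserves log-concavity, so $X$ is a log-concave random vector with $\mathbb{E}X=0$ and $\mathrm{Cov}(X)=\mathrm{Cov}(\nu)$. The event in question is exactly $\{|\langle X,w\rangle|\geq t\|w\|\}$.

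The key step is to compare $\mathbb{E}|\langle X,w\rangle|$ with $\|w\|$. By Cauchy--Schwarz,
\[\mathbb{E}|\langle X,w\rangle|\leq\left(\mathbb{E}\langle X,w\rangle^{2}\right)^{1/2}=\langle\mathrm{Cov}(\nu)w,w\rangle^{1/2}\leq\|\mathrm{Cov}(\nu)\|_{\mathrm{op}}^{1/2}\|w\|\leq\|w\|.\]
We then split into two cases. If $\mathbb{E}|\langle X,w\rangle|=0$, then $\langle X,w\rangle=0$ almost surely. In this case the probability vanishes for $t>0$, and for $t=0$ the bound $1\leq 2$ is trivial. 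Otherwise, set $s=t\|w\|/\mathbb{E}|\langle X,w\rangle|\geq t$ and apply Lemma \ref{lcsubexp} with $s$ in place of $t$:
\[\mathbb{P}(|\langle X,w\rangle|\geq t\|w\|)=\mathbb{P}(|\langle X,w\rangle|\geq s\,\mathbb{E}|\langle X,w\rangle|)\leq 2e^{-cs}\leq 2e^{-ct}.\]
This gives the claim with the same constant $c$ as in Lemma \ref{lcsubexp}.

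There is no real obstacle here. The only points needing care are the degenerate case where $\langle X,w\rangle$ vanishes almost surely, and checking that the monotonicity $s\geq t$ goes in the right direction. It does, because the mean absolute deviation is at most the standard deviation, which is at most $\|w\|$.
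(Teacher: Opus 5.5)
Your proof is correct and is essentially the paper's argument (center, get $\mathbb{E}|\langle X-z,w\rangle|\le\|w\|$ from Cauchy--Schwarz, invoke Lemma \ref{lcsubexp}). The paper uses the inclusion $\{|\langle X-z,w\rangle|\ge t\|w\|\}\subset\{|\langle X-z,w\rangle|\ge t\,\mathbb{E}|\langle X-z,w\rangle|\}$ instead of rescaling $t$. Your separate handling of the case $\mathbb{E}|\langle X,w\rangle|=0$ is actually a bit more careful than the paper, since Lemma \ref{lcsubexp} as literally stated fails in such degenerate directions. One slip, though: for $w=0$ the probability equals $1$ for every $t$, not $0$. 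So the statement itself is false for large $t$, and $w\neq 0$ must be tacitly assumed, as it is in the paper's only use of the lemma (Lemma \ref{meanseparation}).
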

\begin{proof}
Suppose that $X$ is a random vector in $\mathbb{R}^{n}$ distributed according to $\nu$. Then $\mathbb{E}X=z$ and
\[\mathbb{E}|\langle X-z,w\rangle|\leq(\mathbb{E}|\langle X-z,w\rangle|^{2})^{1/2}\leq(\|\mathrm{Cov}(\nu)\|_{\mathrm{op}}\|w\|^{2})^{1/2}\leq\|w\|,\]
for all $w\in\mathbb{R}^{n}$. Moreover, since $X$ is log-concave, $X-z$ is also log-concave, so by Lemma \ref{lcsubexp},
\begin{align*}
&\mathbb{P}(|\langle X-z,w\rangle|\geq t\|w\|)\\\leq&
\mathbb{P}(|\langle X-z,w\rangle|\geq t\cdot\mathbb{E}|\langle X-z,w\rangle|)\leq 2e^{-ct},
\end{align*}
for all $t\geq 0$ and $w\in\mathbb{R}^{n}$.
\end{proof}
\begin{lemma}\label{meanseparation}
Let $0<\delta<\frac{1}{2}$. Let $\nu_{1},\nu_{2}$ be log-concave probability measures on $\mathbb{R}^{n}$ with means $z_{1},z_{2}$, respectively. If $\|\mathrm{Cov}(\nu_{1})\|_{\mathrm{op}}\leq 1$, $\|\mathrm{Cov}(\nu_{2})\|_{\mathrm{op}}\leq 1$ and
\[\|z_{1}-z_{2}\|\geq C_{1}\ln\frac{1}{\delta},\]
where $C_{1}>0$ is a large enough absolute constant, then
\[\nu_{1}(\{x\in\mathbb{R}^{n}:\,\langle x,z_{1}-z_{2}\rangle>s\})\geq 1-\delta,\]
and
\[\nu_{2}(\{x\in\mathbb{R}^{n}:\,\langle x,z_{1}-z_{2}\rangle<s\})\geq 1-\delta,\]
where $s=\frac{1}{2}\langle z_{1}+z_{2},z_{1}-z_{2}\rangle$.
\end{lemma}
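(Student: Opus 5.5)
The plan is to apply Lemma \ref{lcsubexp2} to each measure in the direction $w=z_{1}-z_{2}$, and then use that the threshold $s$ sits exactly halfway between the projected means. Write $d=\|z_{1}-z_{2}\|$. A direct computation gives
\[\langle z_{1},z_{1}-z_{2}\rangle-s=\tfrac{1}{2}\langle z_{1}-z_{2},z_{1}-z_{2}\rangle=\tfrac{1}{2}d^{2},\qquad s-\langle z_{2},z_{1}-z_{2}\rangle=\tfrac{1}{2}d^{2}.\]
So $\langle z_{1},w\rangle$ lies at distance $d^{2}/2$ above $s$, and $\langle z_{2},w\rangle$ lies at distance $d^{2}/2$ below it.

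\textbf{Step 1 (the measure $\nu_{1}$).} Suppose $x$ satisfies $\langle x,w\rangle\le s$. Then
\[\langle x-z_{1},w\rangle\le -\tfrac{1}{2}d^{2},\quad\text{so}\quad |\langle x-z_{1},w\rangle|\ge \tfrac{d}{2}\,\|w\|.\]
Apply Lemma \ref{lcsubexp2} to $\nu_{1}$, which is log-concave with mean $z_{1}$ and $\|\mathrm{Cov}(\nu_{1})\|_{\mathrm{op}}\le 1$, taking $t=d/2$. This gives
\[\nu_{1}(\{x:\langle x,w\rangle\le s\})\le 2e^{-cd/2}.\]

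\textbf{Step 2 (the measure $\nu_{2}$).} The argument is symmetric. If $\langle x,w\rangle\ge s$, then $\langle x-z_{2},w\rangle\ge d^{2}/2$. Lemma \ref{lcsubexp2} applied to $\nu_{2}$ then gives
\[\nu_{2}(\{x:\langle x,w\rangle\ge s\})\le 2e^{-cd/2}.\]

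\textbf{Step 3 (choosing $C_{1}$).} It remains to make $2e^{-cd/2}\le\delta$. This holds whenever $d\ge \frac{2}{c}\ln\frac{2}{\delta}$. Since $\delta<\frac12$, we have $\ln 2<\ln\frac{1}{\delta}$, hence $\ln\frac{2}{\delta}\le 2\ln\frac1\delta$. So $C_{1}=4/c$ suffices.

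There is no real obstacle in this argument. The only points needing care are the exact halfway computation, which produces the factor $d^{2}/2=(d/2)\|w\|$, and absorbing the constant $2$ using $\delta<\frac12$.
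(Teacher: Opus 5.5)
Your proposal is correct and matches the paper's proof: both apply Lemma \ref{lcsubexp2} in the direction $w=z_{1}-z_{2}$ with $t=\frac{1}{2}\|w\|$, using that $s$ lies exactly halfway between $\langle z_{1},w\rangle$ and $\langle z_{2},w\rangle$. Your explicit choice $C_{1}=4/c$ just makes precise the paper's ``choose $C_{1}$ large enough''; it absorbs the factor $2$ in $2e^{-cd/2}$ via $\ln\frac{2}{\delta}\leq 2\ln\frac{1}{\delta}$, which holds because $\delta<\frac{1}{2}$.
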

\begin{proof}
Let $w=z_{1}-z_{2}$. Then $s=\frac{1}{2}\langle z_{1}+z_{2},w\rangle$, so we have
\[s-\langle z_{1},w\rangle=\frac{1}{2}\langle z_{2}-z_{1},w\rangle=-\frac{1}{2}\|w\|^{2},\]
and similarly,
\[s-\langle z_{2},w\rangle=\frac{1}{2}\langle z_{1}-z_{2},w\rangle=\frac{1}{2}\|w\|^{2}.\]
Thus, by Lemma \ref{lcsubexp2} with $t=\frac{1}{2}\|w\|$,
\begin{align*}
&\nu_{1}(\{x\in\mathbb{R}^{n}:\,\langle x,z_{1}-z_{2}\rangle\leq s\})\\=&
\nu_{1}(\{x\in\mathbb{R}^{n}:\,\langle x,w\rangle\leq s\})\\=&
\nu_{1}(\{x\in\mathbb{R}^{n}:\,\langle x-z_{1},w\rangle\leq s-\langle z_{1},w\rangle\})\\=&
\nu_{1}(\{x\in\mathbb{R}^{n}:\,\langle x-z_{1},w\rangle\leq-\frac{1}{2}\|w\|^{2}\})\\\leq&
2e^{-\frac{c}{2}\|w\|}\\\leq&
\delta,
\end{align*}
where the last step follows from the assumption $\|w\|=\|z_{1}-z_{2}\|\geq C_{1}\ln\frac{1}{\delta}$ and we choose the absolute constant $C_{1}>0$ to be large enough. Similarly, we also have
\begin{align*}
&\nu_{2}(\{x\in\mathbb{R}^{n}:\,\langle x,z_{1}-z_{2}\rangle\geq s\})\\=&
\nu_{2}(\{x\in\mathbb{R}^{n}:\,\langle x,w\rangle\geq s\})\\=&
\nu_{2}(\{x\in\mathbb{R}^{n}:\,\langle x-z_{2},w\rangle\geq s-\langle z_{2},w\rangle\})\\=&
\nu_{2}(\{x\in\mathbb{R}^{n}:\,\langle x-z_{2},w\rangle\geq\frac{1}{2}\|w\|^{2}\})\\\leq&
2e^{-\frac{c}{2}\|w\|}\\\leq&\delta.
\end{align*}
\end{proof}
\subsection{Separation when the covariances are far apart}\label{secondmainproofsubsection2}
\begin{lemma}\label{smseparation}
Let $0<\delta<\frac{1}{2}$. If $\nu_{1},\nu_{2}$ are log-concave probability measures on $\mathbb{R}^{n}$ such that $\nu_{1},\nu_{2}$ have mean $0$, $\|B_{\nu_{1}}\|_{\mathrm{op}}\leq 1$, $\|B_{\nu_{2}}\|_{\mathrm{op}}\leq 1$ and
\[\|B_{\nu_{1}}-B_{\nu_{2}}\|_{\mathrm{F}}\geq C_{2}\left(\ln\frac{1}{\delta}\right)^{2}\sqrt{\ln n},\]
where $C_{2}>0$ is a large enough absolute constant, then
\[\nu_{1}\left(\left\{x\in\mathbb{R}^{n}:\,\langle(B_{\nu_{1}}-B_{\nu_{2}})x,x\rangle>t\right\}\right)\geq 1-\delta,\]
and
\[\nu_{2}\left(\left\{x\in\mathbb{R}^{n}:\,\langle(B_{\nu_{1}}-B_{\nu_{2}})x,x\rangle<t\right\}\right)\geq 1-\delta,\]
where $t=\frac{1}{2}\mathrm{Tr}((B_{\nu_{1}}-B_{\nu_{2}})(B_{\nu_{1}}+B_{\nu_{2}}))$.
\end{lemma}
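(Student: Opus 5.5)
Set $A=B_{\nu_{1}}-B_{\nu_{2}}$; the plan is to treat the quadratic form $\langle Ax,x\rangle$ as a one-dimensional statistic and show it concentrates on different sides of $t$ under $\nu_{1}$ and $\nu_{2}$. Its means are
\[\mathrm{Tr}(AB_{\nu_{1}})\quad\text{under }\nu_{1},\qquad \mathrm{Tr}(AB_{\nu_{2}})\quad\text{under }\nu_{2}.\]
The threshold satisfies
\[\mathrm{Tr}(AB_{\nu_{1}})-t=\tfrac{1}{2}\mathrm{Tr}(A(B_{\nu_{1}}-B_{\nu_{2}}))=\tfrac{1}{2}\|A\|_{\mathrm{F}}^{2},\]
since $A$ is symmetric. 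In the same way, $t-\mathrm{Tr}(AB_{\nu_{2}})=\frac{1}{2}\|A\|_{\mathrm{F}}^{2}$. So $t$ lies exactly halfway between the two means, at distance $\frac{1}{2}\|A\|_{\mathrm{F}}^{2}$ from each.

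Next I would apply the tail bound (\ref{nsthinshelleq2}) of Lemma \ref{nsthinshell}. Both measures are log-concave and centered, and $\|B_{\nu_{j}}\|_{\mathrm{op}}\leq 1$, which gives
\[\nu_{1}\bigl(\{x:\,|\langle Ax,x\rangle-\mathrm{Tr}(AB_{\nu_{1}})|\geq s\sqrt{\ln n}\,\|A\|_{\mathrm{F}}\}\bigr)\leq Ce^{-c\sqrt{s}},\]
and the same bound for $\nu_{2}$. I would choose $s$ so that $s\sqrt{\ln n}\,\|A\|_{\mathrm{F}}=\frac{1}{2}\|A\|_{\mathrm{F}}^{2}$, that is, $s=\|A\|_{\mathrm{F}}/(2\sqrt{\ln n})$. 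If $\langle Ax,x\rangle\leq t$ for $x$ drawn from $\nu_{1}$, then the form deviates from its mean by at least $\frac{1}{2}\|A\|_{\mathrm{F}}^{2}$, so
\[\nu_{1}(\{x:\,\langle Ax,x\rangle\leq t\})\leq Ce^{-c\sqrt{s}}.\]
The symmetric argument bounds $\nu_{2}(\{x:\,\langle Ax,x\rangle\geq t\})$ by the same quantity.

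It remains to make $Ce^{-c\sqrt{s}}\leq\delta$, which holds once $\sqrt{s}\geq c^{-1}\ln(C/\delta)$. Because $\delta<\frac{1}{2}$, the quantity $\ln(C/\delta)$ is at most a constant multiple of $\ln\frac{1}{\delta}$. So it suffices that
\[\|A\|_{\mathrm{F}}\geq C'\bigl(\ln\tfrac{1}{\delta}\bigr)^{2}\sqrt{\ln n},\]
which is exactly the hypothesis once $C_{2}$ is taken large. The square on $\ln\frac{1}{\delta}$ comes from the $e^{-c\sqrt{s}}$ tail of quadratic forms.

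The main point requiring care is Lemma \ref{nsthinshell} itself, which was proved only up to the reverse H\"older step. Here it is applied directly to $\nu_{1}$ and $\nu_{2}$, so the only check is that each is log-concave with mean $0$, as the statement assumes. Apart from that, the argument is bookkeeping, plus the observation that $\mathrm{Tr}(A^{2})=\|A\|_{\mathrm{F}}^{2}$ because $A$ is symmetric.
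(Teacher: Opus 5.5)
Your proposal is correct and follows the paper's proof essentially step for step. You take the same $A=B_{\nu_1}-B_{\nu_2}$, observe that $t$ sits at distance $\frac{1}{2}\|A\|_{\mathrm{F}}^{2}$ from both means $\mathrm{Tr}(AB_{\nu_j})$, and apply (\ref{nsthinshelleq2}) of Lemma \ref{nsthinshell} using $\|B_{\nu_j}\|_{\mathrm{op}}\leq 1$ with $s=\|A\|_{\mathrm{F}}/(2\sqrt{\ln n})$, which yields the bound $C\exp\bigl(-c\sqrt{\|A\|_{\mathrm{F}}/(2\sqrt{\ln n})}\bigr)\leq\delta$ once $C_2$ is large.
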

\begin{proof}
Let $A=B_{\nu_{1}}-B_{\nu_{2}}$. Then $t=\frac{1}{2}\mathrm{Tr}(A(B_{\nu_{1}}+B_{\nu_{2}}))$, so we have
\[t-\mathrm{Tr}(AB_{\nu_{1}})=\frac{1}{2}\mathrm{Tr}(A(B_{\nu_{2}}-B_{\nu_{1}}))=-\frac{1}{2}\mathrm{Tr}(A^{2})=-\frac{1}{2}\|A\|_{\mathrm{F}}^{2},\]
and similarly,
\[t-\mathrm{Tr}(AB_{\nu_{2}})=\frac{1}{2}\mathrm{Tr}(A(B_{\nu_{1}}-B_{\nu_{2}}))=\frac{1}{2}\mathrm{Tr}(A^{2})=\frac{1}{2}\|A\|_{\mathrm{F}}^{2}.\]
Thus, by Lemma \ref{nsthinshell},
\begin{align*}
&\nu_{1}(\{x\in\mathbb{R}^{n}:\,\langle Ax,x\rangle\leq t\})\\=&
\nu_{1}(\{x\in\mathbb{R}^{n}:\,\langle Ax,x\rangle-\mathrm{Tr}(AB_{\nu_{1}})\leq-\frac{1}{2}\|A\|_{\mathrm{F}}^{2}\})\\\leq&
C\exp\left(-c\sqrt{\frac{\|A\|_{\mathrm{F}}}{2\sqrt{\ln n}\cdot\|B_{\nu_{1}}\|_{\mathrm{op}}}}\,\right)\\\leq&
C\exp\left(-c\sqrt{\frac{\|A\|_{\mathrm{F}}}{2\sqrt{\ln n}}}\,\right)\\\leq&
\delta,
\end{align*}
where the last step follows from the assumption $\|A\|_{\mathrm{F}}=\|B_{\nu_{1}}-B_{\nu_{2}}\|_{\mathrm{F}}\geq C_{2}(\ln\frac{1}{\delta})^{2}\sqrt{\ln n}$ and we choose the absolute constant $C_{2}>0$ to be large enough. Similarly, we also have
\begin{align*}
&\nu_{2}(\{x\in\mathbb{R}^{n}:\,\langle Ax,x\rangle\geq t\})\\=&
\nu_{2}(\{x\in\mathbb{R}^{n}:\,\langle Ax,x\rangle-\mathrm{Tr}(AB_{\nu_{2}})\geq\frac{1}{2}\|A\|_{\mathrm{F}}^{2}\})\\\leq&
C\exp\left(-c\sqrt{\frac{\|A\|_{\mathrm{F}}}{2\sqrt{\ln n}\cdot\|B_{\nu_{2}}\|_{\mathrm{op}}}}\right)\\\leq&
C\exp\left(-c\sqrt{\frac{\|A\|_{\mathrm{F}}}{2\sqrt{\ln n}}}\right)\\\leq&
\delta.
\end{align*}
\end{proof}
\begin{lemma}\label{covseparation}
Let $0<\delta<\frac{1}{2}$. If $\nu_{1},\nu_{2}$ are log-concave probability measures on $\mathbb{R}^{n}$ such that $\|\mathrm{Cov}(\nu_{1})\|_{\mathrm{op}}\leq 1$, $\|\mathrm{Cov}(\nu_{2})\|_{\mathrm{op}}\leq 1$ and
\[\|\mathrm{Cov}(\nu_{1})-\mathrm{Cov}(\nu_{2})\|_{\mathrm{F}}\geq C_{2}\left(\ln\frac{1}{\delta}\right)^{2}\sqrt{\ln n}+4,\]
where $C_{2}>0$ is a large enough absolute constant, then there is a quadratic set $L\subset\mathbb{R}^{n}$ that depends only on the means and covariances of $\nu_{1},\nu_{2}$ such that
\[\nu_{1}(L)\geq 1-\delta\quad\text{and}\quad\nu_{2}(\mathbb{R}^{n}\backslash L)\geq 1-\delta.\]
\end{lemma}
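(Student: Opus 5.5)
Following the strategy sketched at the start of this section, I reduce to Lemma \ref{smseparation} by projecting away the means. Let $z_{1},z_{2}$ be the means of $\nu_{1},\nu_{2}$ and let $P$ be the orthogonal projection onto $\mathrm{span}(z_{1},z_{2})$, which has rank at most $2$. Set $\widetilde{\nu}_{j}:=(I-P)_{\#}\nu_{j}$ for $j=1,2$. These measures are log-concave, since they are pushforwards of log-concave measures by a linear map. Each has mean $(I-P)z_{j}=0$, so its second moment equals its covariance: $B_{\widetilde{\nu}_{j}}=(I-P)\mathrm{Cov}(\nu_{j})(I-P)$. In particular $\|B_{\widetilde{\nu}_{j}}\|_{\mathrm{op}}\leq\|\mathrm{Cov}(\nu_{j})\|_{\mathrm{op}}\leq 1$.

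Next I transfer the Frobenius lower bound to the projected measures. Apply Lemma \ref{gip} with $M=\mathrm{Cov}(\nu_{1})-\mathrm{Cov}(\nu_{2})$. Since both covariances are positive semidefinite with operator norm at most $1$, we have $\|M\|_{\mathrm{op}}\leq 1$. Hence
\[\|B_{\widetilde{\nu}_{1}}-B_{\widetilde{\nu}_{2}}\|_{\mathrm{F}}=\|(I-P)M(I-P)\|_{\mathrm{F}}\geq\|M\|_{\mathrm{F}}-4\geq C_{2}\left(\ln\frac{1}{\delta}\right)^{2}\sqrt{\ln n}.\]
This is exactly why the hypothesis carries the extra $+4$. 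All hypotheses of Lemma \ref{smseparation} now hold for $\widetilde{\nu}_{1},\widetilde{\nu}_{2}$.

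Set $A:=B_{\widetilde{\nu}_{1}}-B_{\widetilde{\nu}_{2}}$ and $t:=\frac{1}{2}\mathrm{Tr}(A(B_{\widetilde{\nu}_{1}}+B_{\widetilde{\nu}_{2}}))$. Lemma \ref{smseparation} gives $\widetilde{\nu}_{1}(\{y:\langle Ay,y\rangle>t\})\geq 1-\delta$ and $\widetilde{\nu}_{2}(\{y:\langle Ay,y\rangle<t\})\geq 1-\delta$. Define
\[L:=\{x\in\mathbb{R}^{n}:\,\langle(I-P)A(I-P)x,x\rangle-t>0\}.\]
This is a quadratic set, with matrix $(I-P)A(I-P)$, linear term $w=0$, and constant $s=-t$. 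It depends only on $z_{1},z_{2}$ (through $P$) and on the covariances.

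Finally I pull the conclusion back. By the definition of pushforward, $\nu_{1}(L)=\widetilde{\nu}_{1}(\{y:\langle Ay,y\rangle>t\})\geq 1-\delta$. Likewise $\nu_{2}(\mathbb{R}^{n}\backslash L)\geq\widetilde{\nu}_{2}(\{y:\langle Ay,y\rangle<t\})\geq 1-\delta$, since the complement of $L$ contains the preimage of $\{\langle Ay,y\rangle<t\}$. Once Lemma \ref{smseparation} is available nothing here is difficult. The only step needing care is checking the hypotheses of Lemma \ref{smseparation}: the means must vanish, which forces the projection step, and the operator bounds together with Lemma \ref{gip} must absorb the rank-$2$ loss, which the $+4$ handles.
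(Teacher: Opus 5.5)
Your proposal is correct and follows the paper's proof essentially step for step. You project onto $\mathrm{span}(z_{1},z_{2})^{\perp}$, use Lemma \ref{gip} with $\|\mathrm{Cov}(\nu_{1})-\mathrm{Cov}(\nu_{2})\|_{\mathrm{op}}\leq 1$ so the $+4$ absorbs the rank-two loss, apply Lemma \ref{smseparation}, and pull the quadratic set back through $I-P$. You also make explicit two points the paper leaves implicit: the operator-norm bound follows from both covariances being positive semidefinite with norm at most $1$, and each $\widetilde{\nu}_{j}$ is itself log-concave, which is what Lemma \ref{smseparation} actually requires.
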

\begin{proof}
Let $P$ be orthogonal projection from $\mathbb{R}^{n}$ onto the span of $z_{1},z_{2}$, where $z_{1}$ and $z_{2}$ are the means of $\nu_{1}$ and $\nu_{2}$, respectively. Let $\widetilde{\nu}_{1}:=(I-P)_{\#}\nu_{1}$ and $\widetilde{\nu}_{2}:=(I-P)_{\#}\nu_{2}$. We have
\begin{enumerate}[(1)]
\item $\widetilde{\nu}_{1}$ has mean $0$ and covariance $(I-P)\mathrm{Cov}(\nu_{1})(I-P)$
\item $\widetilde{\nu}_{2}$ has mean $0$ and covariance $(I-P)\mathrm{Cov}(\nu_{2})(I-P)$
\item $\alpha\widetilde{\nu}_{1}+(1-\alpha)\widetilde{\nu}_{2}$ is log-concave.
\end{enumerate}
Thus, $\|B_{\widetilde{\nu}_{1}}\|_{\mathrm{op}}=\|(I-P)\mathrm{Cov}(\nu_{1})(I-P)\|_{\mathrm{op}}\leq\|\mathrm{Cov}(\nu_{1})\|_{\mathrm{op}}\leq 1$, and similarly, $\|B_{\widetilde{\nu}_{2}}\|_{\mathrm{op}}\leq 1$. We also have
\begin{eqnarray*}
\|B_{\widetilde{\nu}_{1}}-B_{\widetilde{\nu}_{2}}\|_{\mathrm{F}}&=&
\|(I-P)(\mathrm{Cov}(\nu_{1})-\mathrm{Cov}(\nu_{2}))(I-P)\|_{\mathrm{F}}\\&\geq&
\|\mathrm{Cov}(\nu_{1})-\mathrm{Cov}(\nu_{2})\|_{\mathrm{F}}-4\|\mathrm{Cov}(\nu_{1})-\mathrm{Cov}(\nu_{2})\|_{\mathrm{op}}\\&\geq&
C_{2}\left(\ln\frac{1}{\delta}\right)^{2}\sqrt{\ln n}+4-4\|\mathrm{Cov}(\nu_{1})-\mathrm{Cov}(\nu_{2})\|_{\mathrm{op}}\\&\geq&
C_{2}\left(\ln\frac{1}{\delta}\right)^{2}\sqrt{\ln n},
\end{eqnarray*}
where the second step follows from Lemma \ref{gip} and the third and fourth steps follow from the assumptions on $\mathrm{Cov}(\nu_{1})$ and $\mathrm{Cov}(\nu_{2})$. Therefore, by Lemma \ref{smseparation}, there is a quadratic set $L\subset\mathbb{R}^{n}$ that depends only on $B_{\widetilde{\nu}_{1}}$ and $B_{\widetilde{\nu}_{2}}$ such that
\[\widetilde{\nu}_{1}(L)\geq 1-\delta\quad\text{and}\quad\widetilde{\nu}_{2}(\mathbb{R}^{n}\backslash L)\geq 1-\delta.\]
Thus,
\[\nu_{1}((I-P)^{-1}L)\geq1-\delta\quad\text{and}\quad\nu_{2}(\mathbb{R}^{n}\backslash(I-P)^{-1}L)\geq1-\delta,\]
where $(I-P)^{-1}L=\{x\in\mathbb{R}^{n}:\,(I-P)x\in L\}$.
Since $L$ is a quadratic set, $(I-P)^{-1}L$ is also a quadratic set. The result follows.
\end{proof}
\subsection{Completing the proof of the second main result}\label{secondmainproofsubsection3}
\begin{proof}[Proof of Theorem \ref{secondmain}]
Let $\mu_{1}$ and $\mu_{2}$ be log-concave probability measures on $\mathbb{R}^{n}$ such that $\mu_{j}$ has mean $z_{j}$ and covariance $\Sigma_{j}$ for $j=1,2$. Our goal is to construct a quadratic set $L\subset\mathbb{R}^{n}$ that depends only on $z_{1},z_{2},\Sigma_{1},\Sigma_{2}$ such that
\[\mu_{1}(L)\geq 1-\delta\quad\text{and}\quad\mu_{2}(\mathbb{R}^{n}\backslash L)\geq 1-\delta.\]

We first prove this assuming that $\Sigma_{1}+\Sigma_{2}$ is invertible.

Let $\nu_{1}=(\Sigma_{1}+\Sigma_{2})_{\#}^{-1/2}\mu_{1}$ and $\nu_{2}=(\Sigma_{1}+\Sigma_{2})_{\#}^{-1/2}\mu_{2}$. we have
\begin{enumerate}[(1)]
\item $\nu_{1}$ has mean $(\Sigma_{1}+\Sigma_{2})^{-1/2}z_{1}$ and covariance $(\Sigma_{1}+\Sigma_{2})^{-1/2}\Sigma_{1}(\Sigma_{1}+\Sigma_{2})^{-1/2}$
\item $\nu_{2}$ has mean $(\Sigma_{1}+\Sigma_{2})^{-1/2}z_{2}$ and covariance $(\Sigma_{1}+\Sigma_{2})^{-1/2}\Sigma_{2}(\Sigma_{1}+\Sigma_{2})^{-1/2}$
\item $\alpha\nu_{1}+(1-\alpha)\nu_{2}$ is log-concave.
\end{enumerate}
Thus, $\mathrm{Cov}(\nu_{1})+\mathrm{Cov}(\nu_{2})=I$, so $\|\mathrm{Cov}(\nu_{1})\|_{\mathrm{op}}\leq 1$ and $\|\mathrm{Cov}(\nu_{2})\|_{\mathrm{op}}\leq 1$.

We now consider two cases.

{\bf Case 1:} Assume
\[\|(\Sigma_{1}+\Sigma_{2})^{-1/2}(\Sigma_{1}-\Sigma_{2})(\Sigma_{1}+\Sigma_{2})^{-1/2}\|_{\mathrm{F}}\geq C_{2}\left(\ln\frac{1}{\delta}\right)^{2}\sqrt{\ln n}+4,\]
where $C_{2}>0$ is the absolute constant in Lemma \ref{covseparation}. In this case, we have
\[\|\mathrm{Cov}(\nu_{1})-\mathrm{Cov}(\nu_{2})\|_{\mathrm{F}}\geq C_{2}\left(\ln\frac{1}{\delta}\right)^{2}\sqrt{\ln n}+4,\]
so by Lemma \ref{covseparation}, there is a quadratic set $L\subset\mathbb{R}^{n}$ that depends only on the means and covariances of $\nu_{1},\nu_{2}$ such that
\[\nu_{1}(L)\geq 1-\delta\quad\text{and}\quad\nu_{2}(\mathbb{R}^{n}\backslash L)\geq 1-\delta.\]
Thus,
\[\mu_{1}((\Sigma_{1}+\Sigma_{2})^{1/2}L)\geq 1-\delta\quad\text{and}\quad\mu_{2}(\mathbb{R}^{n}\backslash[(\Sigma_{1}+\Sigma_{2})^{1/2}L])\geq 1-\delta.\]

{\bf Case 2:} Assume
\[\|(\Sigma_{1}+\Sigma_{2})^{-1/2}(z_{1}-z_{2})\|^{2}\geq C_{1}^{2}\left(\ln\frac{1}{\delta}\right)^{2},\]
where the $C_{1}>0$ is the absolute constant in Lemma \ref{meanseparation}. In this case, letting $z_{\nu_{1}}$ and $z_{\nu_{2}}$ be the means of $\nu_{1}$ and $\nu_{2}$, respectively, we have
\[\|z_{\nu_{1}}-z_{\nu_{2}}\|\geq C_{1}\ln\frac{1}{\delta},\]
so by Lemma \ref{meanseparation}, there is a set $L$ of the form $\{x\in\mathbb{R}^{n}:\,\langle x,w\rangle>t\}$ that depends only on $z_{\nu_{1}},z_{\nu_{2}}$ such that
\[\nu_{1}(L)\geq1-\delta\quad\text{and}\quad\nu_{2}(\mathbb{R}^{n}\backslash L)\geq 1-\delta.\]
Thus,
\[\mu_{1}((\Sigma_{1}+\Sigma_{2})^{1/2}L)\geq 1-\delta\quad\text{and}\quad\mu_{2}(\mathbb{R}^{n}\backslash[(\Sigma_{1}+\Sigma_{2})^{1/2}L])\geq 1-\delta.\]

Since the union of Case 1 and Case 2 covers the assumption (\ref{secondmaineq1}) in the statement of Theorem \ref{secondmain}, this completes the proof of the result when $\Sigma_{1}+\Sigma_{2}$ is invertible.

Next, we prove the result when $z_{2}=0$ but without assuming that $\Sigma_{1}+\Sigma_{2}$ is invertible.

If $z_{1}\notin\mathrm{ran}(\Sigma_{1}+\Sigma_{2})$, then $\mu_{1}$ is supported on $z_{1}+\mathrm{ran}(\Sigma_{1})\subset z_{1}+\mathrm{ran}(\Sigma_{1}+\Sigma_{2})$, whereas $\mu_{2}$ is supported on $\mathrm{ran}(\Sigma_{2})\subset\mathrm{ran}(\Sigma_{1}+\Sigma_{2})$. Letting $L=\{x\in\mathbb{R}^{n}:\,\langle Qx,z_{1}\rangle>0\}$ where $Q$ is the orthogonal projection $\mathbb{R}^{n}$ onto the orthogonal complement $\mathrm{ran}(\Sigma_{1}+\Sigma_{2})^{\perp}$, we have $\mu_{1}(L)=1$ and $\mu_{2}(\mathbb{R}^{n}\backslash L)=1$.

If $z_{1}\in\mathrm{ran}(\Sigma_{1}+\Sigma_{2})$ and (\ref{secondmaineq1}) holds, then the measures $\mu_{1}$ and $\mu_{2}$ are both supported on $\mathrm{ran}(\Sigma_{1}+\Sigma_{2})$. Moreover, $\Sigma_{1}+\Sigma_{2}$ is an invertible operator on $\mathrm{ran}(\Sigma_{1}+\Sigma_{2})$. Therefore, by restricting the ambient space $\mathbb{R}^{n}$ to $\mathrm{ran}(\Sigma_{1}+\Sigma_{2})$ and using what we have shown above, this completes the proof when $z_{2}=0$.

Finally, if $z_{2}\neq 0$, then we can translate the probability measures $\mu_{1}$ and $\mu_{2}$ by the vector $-z_{2}$ so that the translated $\mu_{2}$ has mean $0$ and apply what we have shown above.
\end{proof}

\appendix\section{Inverse operator}\label{inversesection}
Observe that for every positive semidefinite matrix $A\in\mathbb{R}^{n\times n}$, the restriction of the operator $A$ to $\mathrm{ran}(A)$ defines an invertible operator $A|_{\mathrm{ran}(A)}:\mathrm{ran}(A)\to\mathrm{ran}(A)$. Define the operator $A^{-1}:\mathrm{ran}(A)\to\mathrm{ran}(A)$ to be the inverse of $A|_{\mathrm{ran}(A)}$, i.e.,
\[A^{-1}:=(A|_{\mathrm{ran}(A)})^{-1},\]
and similarly, we define $A^{-1/2}:\mathrm{ran}(A)\to\mathrm{ran}(A)$ to be $A^{-1/2}:=(A|_{\mathrm{ran}(A)})^{-1/2}$.

For example, if $A=\begin{bmatrix}2&0&0\\0&3&0\\0&0&0\end{bmatrix}$, then $\mathrm{ran}(A)=\mathbb{R}^{2}$ and the operators $A^{-1}:\mathbb{R}^{2}\to\mathbb{R}^{2}$ and $A^{-1/2}:\mathbb{R}^{2}\to\mathbb{R}^{2}$ are given by
\[A^{-1}=\begin{bmatrix}\frac{1}{2}&0\\0&\frac{1}{3}\end{bmatrix}\quad\text{and}\quad A^{-1/2}=\begin{bmatrix}\frac{1}{\sqrt{2}}&0\\0&\frac{1}{\sqrt{2}}\end{bmatrix}.\]

For two positive semidefinite matrices $\Sigma_{1},\Sigma_{2}\in\mathbb{R}^{n\times n}$, observe that we always have $\mathrm{ker}(\Sigma_{1}+\Sigma_{2})=\mathrm{ker}(\Sigma_{1})\cap\mathrm{ker}(\Sigma_{2})$, and hence by taking orthogonal complements, we obtain
\[\mathrm{ran}(\Sigma_{1}+\Sigma_{2})=\mathrm{ran}(\Sigma_{1})+\mathrm{ran}(\Sigma_{2}).\]
We also have
\[\mathrm{ran}(\Sigma_{1}-\Sigma_{2})\subset\mathrm{ran}(\Sigma_{1})+\mathrm{ran}(\Sigma_{2})=\mathrm{ran}(\Sigma_{1}+\Sigma_{2}).\]
Thus, by restricting the operator $\Sigma_{1}-\Sigma_{2}$ to the subspace $\mathrm{ran}(\Sigma_{1}+\Sigma_{2})$, we have an operator on $(\Sigma_{1}-\Sigma_{2})|_{\mathrm{ran}(\Sigma_{1}+\Sigma_{2})}:\mathrm{ran}(\Sigma_{1}+\Sigma_{2})\to\mathrm{ran}(\Sigma_{1}+\Sigma_{2})$. Therefore, it makes sense to consider the following operator on $\mathrm{ran}(\Sigma_{1}+\Sigma_{2})$:
\[(\Sigma_{1}+\Sigma_{2})^{-1/2}(\Sigma_{1}-\Sigma_{2})(\Sigma_{1}+\Sigma_{2})^{-1/2},\]
which is the composition of $(\Sigma_{1}+\Sigma_{2})^{-1/2}$, $(\Sigma_{1}-\Sigma_{2})|_{\mathrm{ran}(\Sigma_{1}+\Sigma_{2})}$ and $(\Sigma_{1}+\Sigma_{2})^{-1/2}$.

For example, if $\Sigma_{1}=\begin{bmatrix}2&0&0\\0&1&0\\0&0&0\end{bmatrix}$ and $\Sigma_{2}=\begin{bmatrix}1&0&0\\0&0&0\\0&0&0\end{bmatrix}$, then $\mathrm{ran}(\Sigma_{1}+\Sigma_{2})=\mathbb{R}^{2}$ and the operator $(\Sigma_{1}+\Sigma_{2})^{-1/2}(\Sigma_{1}-\Sigma_{2})(\Sigma_{1}+\Sigma_{2})^{-1/2}:\mathbb{R}^{2}\to\mathbb{R}^{2}$ is given by
\[\begin{bmatrix}3&0\\0&1\end{bmatrix}^{-1/2}\begin{bmatrix}1&0\\0&1\end{bmatrix}\begin{bmatrix}3&0\\0&1\end{bmatrix}^{-1/2}=\begin{bmatrix}\frac{1}{3}&0\\0&1\end{bmatrix}.\]
\end{document}